\numberwithin{equation}{section}
\newtheorem*{thmRT}{Ramsey's Theorem}
\newtheorem{theorem}{Theorem}[section]
\newtheorem{claim}[theorem]{Claim}
\newtheorem{corollary}[theorem]{Corollary}
\newtheorem{lemma}[theorem]{Lemma}
\newtheorem{proposition}[theorem]{Proposition}
\newtheorem{question}[theorem]{Question}
\theoremstyle{definition}
\newtheorem{definition}[theorem]{Definition}
\newcommand{\uh}{\upharpoonright}
\renewcommand{\>}{\rangle}
\newcommand{\low}{\operatorname{low}}
\newcommand{\RCA}{\operatorname{RCA}_0}
\newcommand{\ACA}{\operatorname{ACA}_0}
\newcommand{\WKL}{\operatorname{WKL}_0}
\newcommand{\RT}{\operatorname{RT}}
\newcommand{\COH}{\operatorname{COH}}
\newcommand{\SRT}{\operatorname{SRT}}
\newcommand{\ADS}{\operatorname{ADS}}
\newcommand{\CAC}{\operatorname{CAC}}
\newcommand{\ART}{\operatorname{ART}}
\newcommand{\FS}{\operatorname{FS}}
\newcommand{\TS}{\operatorname{TS}}
\newcommand{\RRT}{\operatorname{RRT}}
\begin{document}

\title{Some logically weak Ramseyan theorems}

\keywords{Reverse mathematics; Ramsey's Theorem; Free set; Thin set; Rainbow Ramsey Theorem}
\subjclass[2010]{03B30, 03F35}

\author{Wei Wang}

\thanks{This research is partially supported by NSF Grant 11001281 of China and an NCET grant from the Ministry of Education of China. The author thanks Carl Jockusch and Chitat Chong for their comments on some details in an earlier version. He also thanks the referees for their many helpful suggestions.}

\address{Institute of Logic and Cognition and Department of Philosophy, Sun Yat-sen University, 135 Xingang Xi Road, Guangzhou 510275, P.R. China}
\email{wwang.cn@gmail.com}

\begin{abstract}
We study four families of consequences of Ramsey's Theorem from the viewpoint of reverse mathematics. The first, which we call the Achromatic Ramsey Theorem, is from a partition relation introduced by Erd\H{o}s, Hajnal and Rado: $\omega \to [\omega]^r_{c,\leq d}$, which asserts that for every $f: [\omega]^r \to c$ there exists an infinite $H$ with $|f([H]^r)| \leq d$. The second and third are the Free Set Theorem and the Thin Set Theorem, which were introduced by Harvey Friedman. And the last is the Rainbow Ramsey Theorem. We show that, most theorems from these families are quite weak, i.e., they are strictly weaker than $\ACA$ over $\RCA$. Interestingly, these families turn out to be closely related. We establish the so-called strong cone avoidance property of most instances of the Achromatic Ramsey Theorem by an induction of exponents, then apply this and a similar induction to obtain the strong cone avoidance property of the Free Set Theorem. From the strong cone avoidance property of the Achromatic Ramsey Theorem and the Free Set Theorem, we derive the strong cone property of the Thin Set Theorem and the Rainbow Ramsey Theorem. It follws easily that a theorem with the strong cone avoidance property does not imply $\ACA$ over $\RCA$.
\end{abstract}

\ifx\isdraft\undefined
\else
    \today
\fi

\maketitle

\section{Introduction}\label{s:Introduction}

Reverse mathematics of Ramsey theory has been an active subject for computability theorists for years, in which Ramsey's Theorem for pairs ($\RT^2_2$) has enjoyed being the focus, perhaps since the work of Jockusch \cite{Jockusch:1972.Ramsey}. To facilitate the following discussion of Ramsey theory, let us recall some terminology. If $X$ is a set and $0 < r < \omega$, then $[X]^r$ is the set of $r$-element subsets of $X$; when we write $[X]^r$, $r$ is always a positive integer. A function $f$ is also called a \emph{coloring} or a \emph{partition}, and its values are naturally called \emph{colors}. A \emph{finite coloring} or \emph{$c$-coloring} is a function with finite range or with range contained in $c = \{0,1,\ldots,c-1\}$ where $c$ is a positive integer. For a finite coloring $f: [\omega]^r \to c$, a set $H$ is \emph{homogeneous for $f$} if $f$ is constant on $[H]^r$.

\begin{thmRT}
Every $f: [\omega]^r \to c$ for positive $c$ and $r > 1$ admits an infinite $f$-homogeneous set.
\end{thmRT}

For a fixed pair $r$ and $c$, $\RT^r_c$ is the instance of Ramsey's Theorem for all $c$-colorings of $r$-tuples.

In \cite{Jockusch:1972.Ramsey}, Jockusch conjectured that some computable two coloring of pairs may have only infinite homogeneous sets computing the halting problem. In the language of reverse mathematics, we may formulate Jockusch's conjecture as: $\RCA + \RT^2_2 \vdash \ACA$. This conjecture was later refuted by Seetapun \cite{Seetapun.Slaman:1995.Ramsey}. In his ingenious proof, Seetapun exploited the power of $\Pi^0_1$ classes in controlling complexity, which is encapsulated in a theorem of Jockusch and Soare \cite{Jockusch.Soare:1972.TAMS}. Seetapun's theorem was later significantly strengthened by Cholak, Jockusch and Slaman \cite{Cholak.Jockusch.ea:2001.Ramsey}. Cholak, Jockusch and Slaman \cite{Cholak.Jockusch.ea:2001.Ramsey} introduced two new ideas which have proven fruitful (e.g., see \cite{Hirschfeldt.Shore:2007}) and also apply to our work here. The first is the decomposition of $\RT^2_2$ to the so-called $\COH$ and $\SRT^2_2$; and the second is replacing a slightly more complicated forcing notion in \cite{Seetapun.Slaman:1995.Ramsey} by Mathias forcing. In \cite{Seetapun.Slaman:1995.Ramsey, Cholak.Jockusch.ea:2001.Ramsey}, several questions were raised: whether $\RT^2_2$ implies $\WKL$; whether Ramsey's Theorem for stable $2$-colorings of pairs ($\SRT^2_2$) is equivalent to $\RT^2_2$; and whether $\RT^2_2$ implies $I\Sigma_2$. Of course, all these questions are based on $\RCA$, which is a base theory for most work in reverse mathematics. These had been major open questions in the reverse mathematics of Ramsey theory. The first two have been negatively answered by Jiayi Liu \cite{Liu:2012} and Chong, Slaman and Yang in \cite{Chong.Slaman.ea:2013.SRT}, respectively. More recently, Chong, Slaman and Yang announced a negative answer to the third question.

Besides these major open questions, various authors have also studied consequences of Ramsey's Theorem, mostly of $\RT^2_2$. Many consequences of $\RT^2_2$ have been shown to be strictly weaker; and relations between these consequences give rise to a complicated picture, which fits the tradition of computability theory quite well. For example, Hirschfeldt and Shore \cite{Hirschfeldt.Shore:2007} proved that the Ascending and Descending Sequences principle ($\ADS$) and the Chain and Antichain principle ($\CAC$) are both strictly weaker than $\RT^2_2$; Csima and Mileti \cite{Csima.Mileti:2009.rainbow} proved that the Rainbow Ramsey Theorem for pairs ($\RRT^2_2$) does not imply $\ADS$, and thus is also strictly weaker than $\RT^2_2$.

Now we turn to Ramsey theory with an arbitrary exponent, which seems to deserve more attention from computability theorists. But we do know something, in particular, about complexity bounds. The work of Jockusch \cite{Jockusch:1972.Ramsey} gives some important answers.

\begin{theorem}[Jockusch \cite{Jockusch:1972.Ramsey}]\label{thm:Jo.bounds}
Every computable finite coloring of $[\omega]^r$ admits an infinite homogeneous set in $\Pi^0_{r}$. On the other hand, for each $r$, there are computable $2$-colorings of $[\omega]^r$ which admit no infinite homogeneous sets in $\Sigma^0_r$.
\end{theorem}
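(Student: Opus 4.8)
The plan is to treat the two halves separately, each by an induction on the exponent. For the upper bound, $r=1$ is the infinite pigeonhole principle: a computable $f\colon\omega\to c$ has an infinite color class, which is computable and hence $\Pi^0_1$. For the step I would work with $f$-\emph{end-homogeneous} sets: an infinite $E$ is $f$-end-homogeneous (for $f\colon[\omega]^{r+1}\to c$) if $f(\sigma\cup\{y\})=f(\sigma\cup\{z\})$ whenever $\sigma\in[E]^r$ and $y,z\in E$ lie above $\max\sigma$; on such an $E$ the rule $g(\sigma)=f(\sigma\cup\{y\})$ (for any admissible $y$) defines an $r$-coloring $g\le_T E$ every homogeneous set of which is homogeneous for $f$, so one wants to feed $g$ to the inductive hypothesis. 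The whole difficulty — and what I expect to be the main obstacle — is the arithmetical bookkeeping: each increase of the exponent by one must cost exactly one level of the hierarchy, so that the set produced is $\Pi^0_{r+1}$ rather than higher. The genuinely hard instance is $r=2$, i.e.\ that a computable coloring of pairs admits a $\Pi^0_2$ homogeneous set; here there is no shortcut via a basis theorem, since ``being infinite'' is not a $\Pi^0_1$ condition on a set and so the infinite homogeneous sets do not form a $\Pi^0_1$ class, and one must construct the homogeneous set directly — building its complement stage by stage with a $\emptyset'$-oracle, equivalently, giving it an explicit $\Pi^0_2$ definition. Once $r=2$ is in hand the higher cases follow: build an $f$-end-homogeneous $E$ as a member of a $\Pi^0_1(\emptyset')$ class (coding the reservoir-refinement choices, with a computable lower bound forced on the sizes), so that a relativized low basis theorem gives $E'\le_T\emptyset''$, and then apply the inductive hypothesis to the induced coloring $g$ on $[E]^r$, whose solution then lands in $\Pi^0_r(E)\subseteq\Pi^0_{r+1}$.

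For the lower bound, fix $r>1$. First replace $\Sigma^0_r$ by $\Delta^0_r$: every infinite $\Sigma^0_r$ set has an infinite $\Delta^0_r$ subset — the relativization to $\emptyset^{(r-1)}$ of the fact that every infinite c.e.\ set has an infinite computable subset — and homogeneity is inherited by subsets, so it suffices to exhibit a computable $f\colon[\omega]^r\to 2$ with no infinite homogeneous set computable from $\emptyset^{(r-1)}$. I would prove this by induction on $r$ in the relativized form: for every set $Z$ there is a $Z$-computable $f_Z\colon[\omega]^r\to 2$ with no infinite homogeneous set computable from $Z^{(r-1)}$. (This fails for $r=1$, since an infinite color class is computable from $Z$, which is exactly why the theorem restricts to $r>1$.)

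The base case $r=2$ is the heart of this half: a priority construction of a $Z$-computable $f_Z\colon[\omega]^2\to 2$ defeating every set computable from $Z'$. The $e$-th requirement asks that, if the $e$-th $Z'$-computable set $A_e$ is infinite, then $f_Z(\{a,b\})\ne f_Z(\{a,c\})$ for some $a<b<c$ in $A_e$; since $f_Z$ may consult only $Z$, the construction runs against a $Z$-computable approximation $(Z'_s)_s$ to $Z'$, a requirement acting on fresh pairs whenever the current approximation displays three suitable elements and re-acting if a later stage invalidates the choice — controlling this interaction so that each satisfiable requirement is eventually met is the technical crux. The inductive step $r\to r+1$ codes a jump into the largest coordinate: given $f_{Z'}\colon[\omega]^r\to 2$, which is $Z'$-computable and has no infinite homogeneous set computable from $(Z')^{(r-1)}=Z^{(r)}$, define the $Z$-computable coloring $\hat f$ by letting $\hat f(\{x_0<\dots<x_r\})$ be the output of running the algorithm for $f_{Z'}$ on $\{x_0,\dots,x_{r-1}\}$ for $x_r$ steps with the stage-$x_r$ approximation $Z'_{x_r}$ as oracle (and $0$ if this does not converge). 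If $H$ is an infinite homogeneous set for $\hat f$ with color $i$, then for each $\sigma\in[H]^r$ all sufficiently large $h\in H$ above $\max\sigma$ give $\hat f(\sigma\cup\{h\})=f_{Z'}(\sigma)$, forcing $f_{Z'}(\sigma)=i$; hence $H$ is homogeneous for $f_{Z'}$ and therefore not computable from $Z^{(r)}$. Taking $Z=\emptyset$ and combining with the reduction of $\Sigma^0_r$ to $\Delta^0_r$ yields the second assertion.
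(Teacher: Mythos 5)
This statement is quoted by the paper from Jockusch's 1972 paper and is not proved there, so the only comparison available is with the classical argument — and your architecture does match it: upper bound by induction through end-homogeneous (pre-homogeneous) sets with jump control, lower bound by reducing $\Sigma^0_r$ to $\Delta^0_r$ and then a jump-coding induction whose base is a diagonalization against $\Delta^0_2$ sets. Your two induction steps are essentially right: the $\Pi^0_r(E)\subseteq\Pi^0_{r+1}$ bookkeeping is correct once $E'\leq_T\emptyset''$ and $r\geq 2$, and the step coding the approximation to $Z'$ into the top coordinate, with the limit-lemma argument that any infinite $\hat f$-homogeneous set is $f_{Z'}$-homogeneous, is complete as written.

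The genuine gap is that both exponent-$2$ kernels are announced rather than proved, and for the upper bound this is exactly where the substance of the theorem lies. Saying one should ``build the complement stage by stage with a $\emptyset'$-oracle'' does not contain the needed idea: the obvious greedy or Erd\H{o}s--Rado construction must repeatedly decide which color class is infinite, a $\Pi^0_2$ question, so it naturally runs at $\emptyset''$ and yields only a $\Delta^0_3$ set; producing a homogeneous set that is co-c.e.\ in $\emptyset'$ requires a further device (Jockusch's argument routes through a $\Pi^0_1$ cohesive set, e.g.\ the complement of a maximal set, plus a nontrivial trick to pass from limit-homogeneous to homogeneous without climbing a level), and none of this appears in your sketch. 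The lower-bound kernel (a computable $2$-coloring of pairs with no infinite homogeneous set computable in $Z'$) is more routine, but the movable-marker/priority bookkeeping you gesture at is still the whole content of that half and is not carried out. A smaller repairable point: in the inductive step of the upper bound, forcing only a \emph{computable lower bound on reservoir sizes} does not obviously give a nonempty $\Pi^0_1(\emptyset')$ class all of whose members code infinite end-homogeneous sets, since the true reservoirs may be very sparse; the clean fix is to code only the color choices, always take the least element of the current reservoir, and impose the condition ``the current reservoir is infinite,'' which is itself $\Pi^0_1(\emptyset')$, before applying the low basis theorem relative to $\emptyset'$.
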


It is known that the complexity bounds above also appear in various consequences of $\RT^r_2$. Cholak, Giusto, Hirst and Jockusch \cite{Cholak.Giusto.ea:2005.freeset} showed that the $\Pi^0_r/\Sigma^0_r$ bounds apply to the Free Set Theorem and the Thin Set Theorem; by Csima and Mileti \cite{Csima.Mileti:2009.rainbow}, these bounds apply to the Rainbow Ramsey Theorem; and by Chubb, Hirst and McNicholl \cite{Chubb.Hirst.ea:2009}, the same bounds apply to a binary tree version of Ramsey's Theorem.

From a provability viewpoint, we also have a few results of Ramsey theory. By Jockusch \cite{Jockusch:1972.Ramsey}, we learn that $\RT^3_2$ is equivalent to $\ACA$ over $\RCA$. Recently, the author has proved that $\RCA + \RRT^3_2 \not\vdash \ACA$ in \cite{Wang:RRT}; and later in \cite{Wang:2013.COH.RRT} obtained some strengthening that $\RCA + \RRT^3_2 \not\vdash \WKL$ and $\RCA + \RRT^3_2 \not\vdash \RRT^4_2$.

The aim of this paper is to study Ramsey theory of larger exponents, mainly from a provability viewpoint. We consider several families of consequences of Ramsey's Theorem. As usual, we take $\RCA$ as the base theory and may assume it without explicit reference.

The first family was introduced by Erd\H{o}s, Hajnal and Rado \cite{ErdHos.Hajnal.ea:1965.partition}:
$$
    \omega \to [\omega]^r_{c, < d}
$$
if and only if for every $c$-coloring $f$ of $[\omega]^r$, there exists an infinite $H$ such that $|f([H]^r)| < d$. We call these partition relations the \emph{Achromatic Ramsey Theorem} ($\ART$), and write $\ART^r_{c, d}$ for $\omega \to [\omega]^r_{c, < d + 1}$ and $\ART^r_{<\infty, d}$ for $\forall c \ART^r_{c,d}$.

The second and third families are the \emph{Free Set Theorem} and the \emph{Thin Set Theorem}, which were introduced by Harvey Friedman when he developed Boolean Relation Theory (see \cite{Friedman:BRT}). For a function $f: [\omega]^r \to \omega$, a set $H$ is \emph{free} if $f(x_0,\ldots,x_{r-1}) \not\in H - \{x_0,\ldots,x_{r-1}\}$ for all $(x_0,\ldots,x_{r-1}) \in [H]^r$; and a set $S$ is \emph{thin for $f$} if $f([S]^r) \neq \omega$. The Free Set Theorem ($\FS$) asserts that every $f: [\omega]^r \to \omega$ admits an infinite free set, and the Thin Set Theorem ($\TS$) asserts that every $f$ as above admits an infinite thin set. $\FS^r$ and $\TS^r$ are instances of $\FS$ and $\TS$ for fixed exponent $r$, respectively. Cholak et al. \cite{Cholak.Giusto.ea:2005.freeset} proved that $\RT^r_2$ implies $\FS^r$ and $\FS^r$ implies $\TS^r$.

The last family is the \emph{Rainbow Ramsey Theorem}. The Rainbow Ramsey Theorem concerns bounded colorings: a coloring $f: [\omega]^r \to \omega$ is \emph{$b$-bounded}, if $|f^{-1}(c)| \leq b$ for all $c$. A \emph{rainbow} for a coloring $f: [\omega]^r \to \omega$ is a set $H$ such that $f$ is injective on $[H]^r$. The Rainbow Ramsey Theorem ($\RRT$) asserts that for every pair of positive integers $b$ and $r$, every $b$-bounded coloring of $[\omega]^r$ admits an infinite rainbow, and $\RRT^r_b$ is the instance of the Rainbow Ramsey Theorem for fixed exponent $r$ and bound $b$. Galvin gave an easy proof of $\RRT^r_2$ from $\RT^r_2$ (see \cite{Csima.Mileti:2009.rainbow}), which can be easily translated to yield $\RCA \vdash \forall r(\RT^r_2 \to \RRT^r_2)$.

It turns out that these families are closely related and share the same logical strength. We show that, for positive integers $r$ and sufficiently large $d$, neither $\ART^r_{< \infty,d}$ nor $\FS^r$ implies $\ACA$. Thus $\TS \not\vdash \ACA$. Moreover, we show that $\FS^r \vdash \RRT^r_2$ and consequently $\RRT \not\vdash \ACA$. So, we negatively answer Question 7.6 in \cite{Cholak.Giusto.ea:2005.freeset} and Question 5.15 in \cite{Csima.Mileti:2009.rainbow}.

As one would expect, we establish the logical strength of the Achromatic Ramsey Theorem and the Free Set Theorem by proving some cone avoiding theorems and then building Turing ideals which do not contain the degree of the halting problem. From the point of view of model theory, it may be slightly more natural to consider this common method as building a model which omits certain types of second order variables.

\begin{definition}\label{def:Omitting}
Suppose that $\mathcal{C}$ is a subset of the reals, and $\Phi = \forall X \exists Y \varphi(X,Y)$ is a $\Pi^1_2$ sentence, where $\varphi$ is an arithmetic formula with second order parameters.
\begin{enumerate}
    \item For a fixed $X$, a set $Y$ with $\varphi(X,Y)$ is called a \emph{solution} of $\Phi$ with respect to $X$.
    \item If for every $X$ which computes no real in $\mathcal{C}$, there exists a solution $Y$ of $\Phi$ with respect to $X$ such that $X \oplus Y$ computes no real in $\mathcal{C}$ either, then we say that $\Phi$ admits \emph{$\mathcal{C}$-omitting}.
    \item If for any $X$, whether $X$ computes any real in $\mathcal{C}$ or not, there exists a solution $Y$ of $\Phi$ with respect to $X$ such that $Y$ computes no real in $\mathcal{C}$, then we say that $\Phi$ admits \emph{strong $\mathcal{C}$-omitting}.
    \item $\Phi$ has the \emph{(strong) cone avoidance property} if and only if
        $$
            \forall A, B (A \not\leq_T B \to \Phi \text{ admits (strong) $\mathcal{C}$-omitting for } \mathcal{C} = \{Z: A \leq_T B \oplus Z\}).
        $$
\end{enumerate}
\end{definition}

Suppose that $p$ is a type of one second order variable which cannot be satisfied by computable reals. If $\Phi$ admits omitting for the set of reals satisfying $p$, then we can build a countable $\omega$-model of $\RCA + \Phi$ omitting $p$. So, the cone avoidance property is sufficient for proving $\Phi \not\vdash \ACA$. But, most theorems in the four families enjoy the strong cone avoidance property. To be precise, for each $r$, $\ART^r_{< \infty,d}$ has the strong cone avoidance property for sufficiently large $d$; and $\FS^r, \TS^r$ and $\RRT^r_2$ all have the strong cone avoidance property. Actually, the strong cone avoidance property is a key factor which allows us to establish the cone avoidance property by induction on exponents for the Achromatic Ramsey Theorem and the Free Set Theorem. Interestingly, these inductions follow a zigzag pattern. For example, the induction for $\FS$ proceeds as follows: from the induction hypothesis that the strong cone avoidance property holds for $\FS^{< r}$, we obtain the cone avoidance property for $\FS^r$; and apply this weaker property to obtain the strong cone avoidance property for $\FS^r$.

The proofs of the strong cone avoidance property for the Achromatic Ramsey Theorem and the Free Set Theorem share some other common features. Both proofs follow the cohesive-stable decomposition in Cholak, Jockusch and Slaman \cite{Cholak.Jockusch.ea:2001.Ramsey}: we obtain some cone avoiding set which assumes a property similar to that of cohesiveness; then build a desired set as a subset of this cohesive-like set. And in both proofs, we use Mathias forcing and exploit the power of $\Pi^0_1$ classes in controlling the complexity of certain Mathias generics, as Seetapun did in his celebrated proof.

Besides these similarities, the proof of the strong cone avoidance for the Free Set Theorem heavily depends on the strong cone avoidance property of the Achromatic Ramsey Theorem. So, all clues here suggest that there are some deeper relations, perhaps metamathematical relations, between the Achromatic Ramsey Theorem and the Free Set Theorem. However, at this moment we know nothing. We pose related questions in the last section.

Below, we briefly introduce the remaining sections:
\begin{itemize}
  \item In \S \ref{s:Preliminaries}, we introduce some conventions to facilitate technical formulations, and also recall some useful known results and some basic properties of Mathias forcing.
  \item In \S \ref{s:WRT.sca}, we establish the strong cone avoidance property of the Achromatic Ramsey Theorem and the Thin Set Theorem, and also the logical strength of the Achromatic Ramsey Theorem and the Thin Set Theorem.
  \item In \S \ref{s:FS.sca}, we establish the strong cone avoidance property and the logical strength of the Free Set Theorem. In addition, we reduce the Rainbow Ramsey Theorem to the Free Set Theorem, and thus obtain the strong cone avoidance property of $\RRT$.
  \item In \S \ref{s:Questions}, we raise some questions.
\end{itemize}

As mentioned, all the reverse mathematical results below are based on $\RCA$ and will usually be stated without explicit reference to $\RCA$.

\section{Preliminaries}\label{s:Preliminaries}

In this section, we set up some conventions and recall some notions and known results which are useful for our purposes. For more background knowledge in computability and reverse mathematics, we refer the reader to \cite{Lerman:83} and \cite{Simpson:1999.SOSOA}. We also need some elementary facts about algorithmic randomness, which can be found in \cite{Nies:2010.book}.

\subsection{Sequences}

If $s$ and $t$ are two finite sequences, then we write $st$ for the concatenation of $s$ and $t$. If $x$ is a single symbol, then $\<x\>$ is the finite sequence with only one symbol $x$. The length of a finite sequence $s$ is denoted by $|s|$. If $l < |s|$ then $s \uh l$ is the initial segment of $s$ of length $l$. For $X \subseteq \omega$, $X \uh l$ is interpreted as an initial segment of the characteristic function of $X$ in the obvious way.

Recall that $[X]^r$ for $0 < r < \omega$ is the set of $r$-element subsets of $X$. We also write $[X]^\omega$ for the set of countable subsets of $X$; $[X]^{< r}, [X]^{\leq r}, [X]^{< \omega}, [X]^{\leq \omega}$ are interpreted naturally. If $X \subseteq \omega$, then elements of $[X]^{\leq \omega}$ are identified with strictly increasing sequences. We use $\sigma, \tau, \ldots$ for elements of $[\omega]^{< \omega}$. Under the above convention, we may perform both sequence operations and set operations on elements of $[\omega]^{<\omega}$. For example, we can write $\sigma\tau$ for $\sigma \cup \tau$, if $\max \sigma < \min \tau$; $\sigma \subseteq \tau$ if $\sigma$ is a subset of $\tau$; and $\sigma - \tau = \{x \in \sigma: x \not\in \tau\}$. We extend this convention to infinite subsets of $\omega$, so we write $\sigma X$ for $\sigma \cup X$, if $\max \sigma < \min X$ and $X \in [\omega]^{\leq \omega}$.

\subsection{Trees}

We work with trees which are subsets of $\omega^{<\omega}$. If $T$ is a finite tree, then
$$
    [T] = \{\sigma \in T: \forall x(\sigma\<x\> \not\in T)\};
$$
if $T$ is an infinite tree, then $[T]$ denotes the set of infinite sequences whose initial segments are always in $T$.

When we use finite trees for measure theoretic arguments, we define a function $m_T$ for each finite tree $T$ by induction: $m_T(\emptyset) = 1$, if $\sigma\<x\> \in T$ then
$$
    m_T(\sigma\<x\>) = \frac{m_T(\sigma)}{|\{y: \sigma\<y\> \in T\}|}.
$$
We should consider $m_T$ as a probability measure associated with $T$. So, we can naturally extend the domain of $m_T$ to include certain subsets of $T$, and denote the resulting function by $m_T$ too: if $S \subseteq T$ is \emph{prefix-free} (i.e., if $S$ contains $\sigma$ then $S$ contains \emph{no} proper initial segment of $\sigma$), typically $S \subseteq [T]$, then
$$
    m_T S = \sum_{\sigma \in S} m_T(\sigma).
$$

\subsection{Computation}

For a finite sequence $\sigma$, we write $\Phi_e(\sigma; x) \downarrow$ if $\Phi_e(\sigma; x)$ converges in $|\sigma|$ many steps. We write $\Phi_e(\sigma; x) \uparrow$ for $\neg (\Phi_e(\sigma; x) \downarrow)$.

For a set $B$ and a finite sequence $\sigma$, we write $\Phi_e^B(\sigma; x) \downarrow$ if $\Phi_e((B \uh |\sigma|) \oplus \sigma; x) \downarrow$. Notations, like $\Phi_e^B(\sigma; x) \uparrow$ and $\Phi_e^B(X)$, are interpreted in similar way.

To force a non-computability statement like $\Phi_e^B(H) \neq A$, splitting computations are usually helpful. A pair $(\eta_0,\eta_1) \in [\omega]^{<\omega} \times [\omega]^{<\omega}$ is \emph{$(e,B)$-splitting over $\sigma \in [\omega]^{<\omega}$}, if $\max \sigma < \min \eta_i$ for $i < 2$ and $\Phi_e^B(\sigma \eta_0; x) \downarrow \neq \Phi_e^B(\sigma \eta_1; x) \downarrow$ for some $x$.

\subsection{Some useful known results}

We list some useful results here, but formulate some of them in terms of Definition \ref{def:Omitting}.

By relativizing \cite[Corollary 2.11]{Jockusch.Soare:1972.TAMS}, we obtain the following theorem.

\begin{theorem}[Jockusch and Soare]\label{thm:JoSo}
$\WKL$ has the cone avoidance property.
\end{theorem}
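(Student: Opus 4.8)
First I would unwind the statement through Definition~\ref{def:Omitting}(4). Fix $A \not\leq_T B$ and put $\mathcal{C} = \{Z : A \leq_T B \oplus Z\}$; an instance of $\WKL$ is an infinite subtree $T$ of $2^{<\omega}$, a solution with respect to $T$ is a path $Y \in [T]$, and a real computes a member of $\mathcal{C}$ precisely when it lies in the cone above $A$ over $B$. So the assertion reduces to: for every infinite binary tree $T$ with $A \not\leq_T B \oplus T$ there is a path $Y \in [T]$ with $A \not\leq_T B \oplus T \oplus Y$. Writing $C = B \oplus T$, the class $[T]$ is a nonempty $\Pi^0_1(C)$ class, $T \leq_T C$, and $A \not\leq_T C$; so it suffices to produce $Y \in [T]$ with $A \not\leq_T C \oplus Y$.

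The plan is the classical construction by forcing with $\Pi^0_1(C)$ classes: build a nested sequence of nonempty $\Pi^0_1(C)$ classes $[T] = P_0 \supseteq P_1 \supseteq \cdots$, each presented by a $C$-computable tree $S_e$, with $P_{e+1}$ forcing the requirement $R_e \colon \Phi_e^{C \oplus Y} \neq A$, and then take any $Y \in \bigcap_e P_e$; this intersection is nonempty because the $P_e$ are nested nonempty closed subsets of the compact space $2^\omega$, which is the one place the finite branching of $T$ enters. Passing from $P_e$ to $P_{e+1}$ --- the construction being free to use any oracle, say $(C \oplus A)''$, since only the complexity of $Y$ matters --- splits into three cases. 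If some $Z \in P_e$ and some $x$ have $\Phi_e^{C \oplus Z}(x)\uparrow$, set $P_{e+1} = \{Z \in P_e : \Phi_e^{C \oplus Z}(x)\uparrow\}$, a nonempty $\Pi^0_1(C)$ class forcing $\Phi_e^{C \oplus Y}(x)\uparrow \neq A(x)$. Otherwise, if some $\sigma \in S_e$ extends to a path of $P_e$ and has $\Phi_e^C(\sigma; x)\downarrow \neq A(x)$ for some $x$, set $P_{e+1} = \{Z \in P_e : Z \uh |\sigma| = \sigma\}$, again a nonempty $\Pi^0_1(C)$ class, forcing $\Phi_e^{C \oplus Y}(x) = \Phi_e^C(\sigma; x) \neq A(x)$.

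The remaining case is where cone avoidance is genuinely used, and it is the step I expect to be the main obstacle: if neither of the above holds then $\Phi_e^{C \oplus Z} = A$ for \emph{every} $Z \in P_e$, and this must be excluded by $A \not\leq_T C$. I would prove the lemma that a nonempty $\Pi^0_1(C)$ class $P$ on which the map $Z \mapsto \Phi_e^{C \oplus Z}$ is constantly $A$ forces $A \leq_T C$. The naive idea --- compute $A(n)$ as the common value of $\Phi_e^C(\tau; n)$ over long nodes $\tau$ of the $C$-computable tree $S$ of $P$ --- fails because dead branches of $S$ may carry wrong values or never halt; the fix is a two-scale search: using $C$, look for levels $k < l$ and a value $w$ such that every $\tau \in S$ of length $k$ having an extension in $S$ of length $l$ satisfies $\Phi_e^C(\tau; n)\downarrow = w$, and output $w$. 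Such $k, l, w$ exist: by compactness there is a level $k$ at which $\Phi_e^C(\cdot; n)$ has already converged to $A(n)$ along every path of $P$, so every length-$k$ node violating this is dead and hence of bounded height in $S$, and one takes $l$ above all those heights and $w = A(n)$. Conversely any $w$ returned equals $A(n)$: for a path $Z \in P$ the node $Z \uh k$ has arbitrarily long extensions in $S$, so $\Phi_e^{C \oplus Z}(n) = \Phi_e^C(Z \uh k; n) = w$, while $\Phi_e^{C \oplus Z}(n) = A(n)$. This contradicts $A \not\leq_T C = B \oplus T$, so the third case never arises; any $Y \in \bigcap_e P_e$ is then a path through $T$ with $A \not\leq_T C \oplus Y = B \oplus T \oplus Y$, as required. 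The bookkeeping --- that each $P_{e+1}$ is a nonempty $\Pi^0_1(C)$ class with an explicit $C$-computable tree, and that the construction oracle decides which case holds and finds the witnesses --- is routine; the substance is the two-scale lemma together with the compactness step making $\bigcap_e P_e$ nonempty.
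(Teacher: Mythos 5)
Your argument is correct: the unwinding of Definition~\ref{def:Omitting}(4), the forcing with nested nonempty $\Pi^0_1(B\oplus T)$ subclasses handling divergence and wrong-value cases, and the ``two-scale'' computation of $A$ from $B\oplus T$ in the remaining case (using that dead nodes of a binary tree have bounded height) together constitute the standard cone-avoidance basis theorem. The paper itself states Theorem~\ref{thm:JoSo} only as a citation to Jockusch and Soare, and your proof is essentially their classical argument, so there is nothing to fault beyond routine use conventions, which you handle correctly.
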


Theorem \ref{thm:JoSo} reflects the power of $\Pi^0_1$ classes in controlling complexity, and plays an important role in Seetapun's proof of the following theorem (\cite[Theorem 2.1]{Seetapun.Slaman:1995.Ramsey}).

\begin{theorem}[Seetapun]\label{thm:Seetapun}
$\RT^2_2$ has the cone avoidance property.
\end{theorem}

Dzhafarov and Jockusch discovered a neglected feature of Seetapun's proof that the proof works for finite partitions of $\omega$ of arbitrary complexity (\cite[Lemma 5.2(i)]{Dzhafarov.Jockusch:2009}).

\begin{theorem}[Dzhafarov and Jockusch]\label{thm:DzJo}
The infinite pigeonhole principle has the strong cone avoidance property.
\end{theorem}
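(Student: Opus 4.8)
The plan is to run a finite-approximation (Mathias-style) forcing construction in the spirit of Seetapun's proof (Theorem \ref{thm:Seetapun}) and its analysis by Cholak, Jockusch and Slaman, but with the extra care demanded by the word \emph{strong}. Fix $A \not\leq_T B$ and an arbitrary finite coloring $f: \omega \to k$; crucially, $f$ is allowed to be of unrestricted complexity, possibly with $A \leq_T B \oplus f$. We must build an infinite $f$-homogeneous set $H$ with $A \not\leq_T B \oplus H$. Since we cannot decide which color class is infinite, we build $k$ candidate sets $H_0,\dots,H_{k-1}$ simultaneously, with $H_j$ a finite (later infinite) subset of $f^{-1}(j)$ and a common threshold $n$ bounding $\max\bigcup_j \sigma_j$ from below by a gap; an extension adjoins to some of the $\sigma_j$'s finitely many elements above $n$ and raises $n$. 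In the limit we commit to one $H_j$ whose color class turned out to be infinite. (Equivalently, one may build a single homogeneous set and allow its color to change finitely often; the bookkeeping is the same.)

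Two families of requirements must be met. First, ``$H_j$ is infinite whenever $f^{-1}(j)$ is infinite'': the set of conditions adjoining one more element of color $j$ above the current threshold is dense as soon as $f^{-1}(j)$ is infinite, so a sufficiently generic object is infinite along every color that recurs infinitely often; since $f$ partitions $\omega$ into $k$ pieces, at least one such color exists, so the limit object is infinite and homogeneous. Second, for each $e$ we want $\Phi_e^{B \oplus H_j} \neq A$ for every $j$; meeting this for all $j$ in particular meets it for whichever color we finally commit to. Note that ordinary cone avoidance for $\RT^1$ is trivial — an infinite color class is $f$-computable, so it already lies outside the relevant cone — so all the content is in carrying out this diagonalization without consulting $f$.

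Here is the diagonalization step, which is the heart. At a condition with stems $\vec\sigma$ and threshold $n$, and index $e$: search for an argument $x$ such that \emph{for every} color $j<k$ there is $\tau_j \supseteq \sigma_j$ with $\tau_j \setminus \sigma_j \subseteq f^{-1}(j)\cap(n,\infty)$ and $\Phi_e^{B\oplus\tau_j}(x)\downarrow \neq A(x)$; if found, extend all coordinates to the $\tau_j$ and the requirement is permanently met for every color. If no such $x$ exists, then for every $x$ some color $j(x)$ admits no valid extension witnessing disagreement at $x$, and by the pigeonhole principle a single color $j^\ast$ works for an infinite set $D$ of arguments; along $j^\ast$ every valid extension $\tau$ satisfies $\Phi_e^{B\oplus\tau}(x)\downarrow \Rightarrow \Phi_e^{B\oplus\tau}(x)=A(x)$ for all $x\in D$. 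One then splits: either for infinitely many $x \in D$ no valid color-$j^\ast$ extension converges at $x$, in which case $\Phi_e^{B\oplus H_{j^\ast}}$ is forced non-total and the requirement is met along $j^\ast$ for divergence reasons; or else, because the trapping has become sufficiently color-agnostic, the value $A(x)$ can be recovered by a $B$-computable search over finite extensions, contradicting $A \not\leq_T B$. Interleaving these actions while keeping track of which infinite color remains viable for all future requirements yields a generic that is infinite, homogeneous, and cone-avoiding.

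The main obstacle is exactly the ``no witness'' branch of the diagonalization: converting the failure to diagonalize into either a legitimate forcing move (forcing non-totality) or an outright computation of $A$ from $B$ alone, \emph{without ever enumerating a color class of $f$}. The naive argument would recover $A$ from $B \oplus f$, which is useless here since $f$ may be as complicated as we like; the finite-color pigeonhole applied to the would-be diagonalization witnesses is what replaces the missing oracle, and making its output uniform enough to be $B$-computable, together with guaranteeing that the color committed to in the limit is simultaneously good for all requirements, is the delicate part. Once this is in place, verifying that the limit object meets every requirement is routine.
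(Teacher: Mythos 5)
The theorem you are proving is one the paper only cites (from Dzhafarov and Jockusch) rather than reproves, so the benchmark is the known Seetapun-style argument, i.e.\ the same machinery this paper deploys at higher exponents in Lemmas \ref{lem:WRT.sca.G.ext} and \ref{lem:FS.sca.Seetapun}. Your outline isolates the right difficulty but does not overcome it. The decisive gap is the ``no witness'' branch: after restricting to a single color $j^\ast$ you assert that, failing forced partiality, $A(x)$ ``can be recovered by a $B$-computable search over finite extensions'' because things have become ``sufficiently color-agnostic''. That is precisely the step requiring proof, and as stated it fails: the only \emph{sound} search ranges over extensions $\tau \subseteq f^{-1}(j^\ast)$, which is not a $B$-computable search since $f$ is of arbitrary complexity, while a search over arbitrary finite sets above the threshold is $B$-computable but unsound, because a convergent computation along an impurely colored extension need not output $A(x)$. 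Moreover your conditions carry no reservoir, so there is not even an auxiliary cone-avoiding oracle $B \oplus X$ against which to derive the contradiction. The actual proofs close this case with extra machinery: conditions carry a shrinking infinite reservoir $X$ with $A \not\leq_T B \oplus X$, and one either passes to a nonempty $\Pi^{0, B\oplus X}_1$ class of candidate partitions admitting no $(e,B)$-splitting validly colored extensions and applies Theorem \ref{thm:JoSo} to pick a cone-avoiding member used to thin $X$ (so that any total $\Phi^{B\oplus Z}_e$ with $Z$ in the new condition is computable from the thinned reservoir, hence $\neq A$), or argues by induction on the number of colors; nothing playing this role appears in your sketch.

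There is a second gap in the endgame bookkeeping. In the bad branch you secure the requirement for $e$ only on the one color $j^\ast = j^\ast(e)$, and $j^\ast$ varies with $e$; with finitely many colors and infinitely many requirements, the set of colors on which \emph{all} requirements are met may be empty. Nor can you hope to secure every requirement on every color: if $f^{-1}(0)$ is the set of canonical codes of the initial segments of $A$, then every infinite subset of $f^{-1}(0)$ computes $A$, so color $0$ must eventually be abandoned (kept finite), while your density argument is simultaneously trying to make $H_j$ infinite for every color with infinite class. ``Keeping track of which infinite color remains viable for all future requirements'' is thus not routine interleaving; it is the content of the theorem, handled in the literature either by indexing requirements by $k$-tuples of functionals with a careful final argument, or by letting the reservoir collapse into a tail contained in a single color class when another class becomes finite inside it, after which ordinary Mathias forcing (Lemma \ref{lem:M-forcing.ca}) finishes. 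Without one of these mechanisms, your construction does not guarantee that the color finally committed to is both of infinite class and cone-avoiding.
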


For the Free Set Theorem, we need the following theorem which follows from the proof of Theorem 5.2 in Cholak et al. \cite{Cholak.Giusto.ea:2005.freeset}.

\begin{theorem}[Cholak et al.]\label{thm:CGHJ}
For each $f: [\omega]^r \to \omega$, there exists $g: [\omega]^r \to 2r + 2$ such that $g \leq_T f$ and $g \oplus H$ computes an infinite $f$-free set for every infinite $g$-homogeneous $H$. Moreover, if $f(\sigma) \leq \max \sigma$ for all $\sigma \in [\omega]^r$ then every $g$-homogeneous set is $f$-free.
\end{theorem}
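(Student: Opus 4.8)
The plan is to attach to each $\sigma$ the outcome of a well-founded reduction procedure, and to colour $\sigma$ by that outcome together with a single parity bit. For $\sigma=\{x_0<\cdots<x_{r-1}\}\in[\omega]^r$, form a sequence $\sigma=\sigma_0,\sigma_1,\ldots$ by the rule: if $f(\sigma_j)\notin\sigma_j$ \emph{and} $f(\sigma_j)<\max\sigma_j$, set $\sigma_{j+1}=(\sigma_j\setminus\{\max\sigma_j\})\cup\{f(\sigma_j)\}$; otherwise stop. Each step strictly decreases the maximum element, so the procedure halts at some $\sigma_k$, and it halts for exactly one of $r+1$ reasons: either $f(\sigma_k)$ is the $i$th element of $\sigma_k$ (for some $i<r$), or $f(\sigma_k)>\max\sigma_k$. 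Define $g(\sigma)$ to be the pair consisting of this halting reason ($r+1$ possible values) together with $k\bmod 2$; this uses precisely $2r+2$ colours, and since the whole procedure is computed from $f$ we get $g\leq_T f$.

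The heart of the matter is the "moreover" clause, and this is where the parity bit earns its keep. Suppose $H$ is infinite and $g$-homogeneous and, towards a contradiction, that $H$ is not $f$-free, witnessed by some $\sigma\in[H]^r$ with $f(\sigma)=y\in H\setminus\sigma$. The key observation is that if $y<\max\sigma$, then the first step of the reduction from $\sigma$ is legal and lands on $\sigma'=(\sigma\setminus\{\max\sigma\})\cup\{y\}$, which is again an $r$-element subset of $H$ (because $y\in H$ and $y\notin\sigma$); but the reduction sequence from $\sigma$ is exactly this one step followed by the reduction sequence from $\sigma'$, so the two halting indices have opposite parity, contradicting $g(\sigma)=g(\sigma')$. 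Under the hypothesis $f(\sigma)\leq\max\sigma$ for all $\sigma$ we always have $y<\max\sigma$, so no such $\sigma$ exists and $H$ is $f$-free, which proves the clause.

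For arbitrary $f$ the same parity argument rules out every failure of freeness except those coming from a $\sigma\in[H]^r$ with $f(\sigma)>\max\sigma$, and every such $\sigma$ carries the single colour $(\text{``}f(\sigma_k)>\max\sigma_k\text{''},\,0)$ (a length-$0$ halting). Hence if the homogeneous colour of $H$ is anything else, $H$ itself is already an infinite $f$-free set; and if it is that one colour, then $f(\sigma)>\max\sigma$ for every $\sigma\in[H]^r$, and one extracts a free subset $H'=\{h_0<h_1<\cdots\}\subseteq H$ greedily, letting $h_{j+1}$ be the least element of $H$ above $h_j$ avoiding the finite set $\{f(\sigma):\sigma\in[\{h_0,\ldots,h_j\}]^r\}$, so that every value $f(\sigma)$ for $\sigma\subseteq H'$, being larger than $\max\sigma$, falls into a gap of $H'$. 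I expect the main obstacle to be exactly this last case: the obvious thinning wants the actual values of $f$ on already-committed tuples, whereas the colour of such a tuple reveals only that $f(\sigma)>\max\sigma$. So the delicate point is to arrange, within the $2r+2$-colour budget, that this "large value" colour still carries enough bounded surrogate information to let the thinning be run from $g\oplus H$ alone — a difficulty that, tellingly, disappears entirely under the boundedness hypothesis of the "moreover" clause. Checking $g\leq_T f$ and the uniformity of the reduction is then routine.
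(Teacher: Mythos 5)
Your coloring and the parity argument are sound, and they do establish the \emph{moreover} clause in full: if $\sigma\in[H]^r$ had $f(\sigma)=y\in H\setminus\sigma$ with $y<\max\sigma$, then $\sigma'=(\sigma\setminus\{\max\sigma\})\cup\{y\}$ is again in $[H]^r$ and its halting index has opposite parity, contradicting homogeneity; consequently a homogeneous set whose colour is anything other than (``large value'', $0$) is outright $f$-free. Note that the present paper imports Theorem~\ref{thm:CGHJ} from \cite{Cholak.Giusto.ea:2005.freeset} without proof and only ever uses it through the moreover clause (see the remark following the theorem and Corollary~\ref{cor:FS.sca.1}), so this part of your argument covers everything actually needed here.

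The gap is the one you flagged yourself, and it is genuine rather than cosmetic. In the remaining case, where homogeneity forces $f(\sigma)>\max\sigma$ for all $\sigma\in[H]^r$, your greedy thinning consults the values of $f$ on already committed tuples, so it yields a free set computable from $f\oplus H$, not from $g\oplus H$ as the quoted statement requires. With your particular $g$ the stronger clause is actually false: for $r=1$ choose $f$ with $f(x)>x$ for all $x$, arranged (non-effectively) so that every infinite computable set contains $a<b$ with $f(a)=b$; then $g$ is the constant colour (``large value'', $0$), $H=\omega$ is homogeneous, $g\oplus H$ is computable, and no infinite computable set is $f$-free. So finishing the proof as literally stated would require changing $g$ in this case, not just sharpening the extraction; and the cheap repair of hard-coding a free set into $g$ (which does work for $r=1$, where an $f$-computable free set always exists) is unavailable for $r\geq 2$, since there are computable instances of $\FS^r$ with no infinite $\Sigma^0_r$, in particular no $f$-computable, free set \cite{Cholak.Giusto.ea:2005.freeset}. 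What your construction does prove is the variant in which the free set is computed from $f\oplus H$ --- equivalently the implication from $\RT^r_{2r+2}$ to $\FS^r$ over $\RCA$, where $f$ is available --- together with the full moreover clause, and that weaker reading suffices for every application of the theorem in this paper.
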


Note that, combining Theorems \ref{thm:CGHJ} and \ref{thm:DzJo}, if we restrict the Free Set Theorem for $f: \omega \to \omega$ such that $f(x) \leq x$ for all $\sigma \in [\omega]^r$, then we have the strong cone avoidance property.

\subsection{Mathias forcing}

Here we include a well-known computability theoretic property of Mathias forcing and also an easy corollary of this property that $\COH$ has the strong cone avoidance property.

\begin{definition}\label{def:M-forcing}
A \emph{Mathias condition} is a pair $(\sigma,X) \in [\omega]^{<\omega} \times [\omega]^\omega$ such that $\max \sigma < \min X$. We identify a Mathias condition $(\sigma,X)$ with the set below:
$$
    \{Y \in [\omega]^\omega: \sigma \subset Y \subseteq \sigma \cup X\}.
$$

For two Mathias conditions $(\sigma,X)$ and $(\tau,Y)$, $(\tau,Y) \leq_M (\sigma,X)$ if and only if $(\tau,Y) \subseteq (\sigma,X)$ under the above convention.
\end{definition}

\begin{lemma}[Folklore]\label{lem:M-forcing.ca}
For each $e$ and a Mathias condition $(\sigma,X)$ with $A \not\leq_T B \oplus X$, there exists a Mathias condition $(\tau,Y) \leq_M (\sigma,X)$ such that $A \not\leq_T B \oplus Y$ and $\Phi_e^B(Z) \neq A$ for every $Z \in (\tau,Y)$.
\end{lemma}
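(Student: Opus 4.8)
The plan is to prove this by a standard forcing-with-finite-conditions argument over the Mathias notion of forcing, diagonalizing against the single requirement $\mathcal{R}_e : \Phi_e^B(Z) \neq A$ for all $Z$ in the final condition. The key dichotomy is whether the given condition $(\sigma, X)$ can be extended to decide the requirement ``positively'' (by forcing a divergence or a disagreement on a single string) or whether it is forced to be ``densely splitting'', in which case the splitting tree itself computes $A$ from $B \oplus X$, contradicting the hypothesis $A \not\leq_T B \oplus X$.

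First I would set up the case analysis. Call a finite set $\tau$ with $\max\sigma < \min\tau$ and $\tau \subseteq X$ a \emph{good extension} if there is some $x$ with $\Phi_e^B(\sigma\tau; x)\downarrow \neq A(x)$; in that case take $(\sigma\tau, Y)$ where $Y = X - (\max\tau + 1)$ (i.e.\ the tail of $X$ past $\max\tau$), which clearly still satisfies $A \not\leq_T B\oplus Y$ since $Y \leq_T X$, and forces $\Phi_e^B(Z) \neq A$ for every $Z \in (\sigma\tau, Y)$ because $\sigma\tau \subseteq Z$ and the use of the computation is bounded. The second case is when no good extension exists: then for every finite $\tau \subseteq X$ past $\sigma$ and every $x$, whenever $\Phi_e^B(\sigma\tau; x)\downarrow$ we have $\Phi_e^B(\sigma\tau; x) = A(x)$. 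Here I would argue that either there is some $\tau$ and some $x$ with $\Phi_e^B(\sigma\tau; x)\uparrow$ \emph{for every} finite extension (forcing divergence), which again lets us close off the condition; or, failing that, $B \oplus X$ can compute $A$ by searching, for each $x$, for \emph{some} finite $\tau \subseteq X$ past $\sigma$ with $\Phi_e^B(\sigma\tau; x)\downarrow$, which by the case assumption must equal $A(x)$. This last branch contradicts $A \not\leq_T B \oplus X$.

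The one subtlety to handle carefully is the ``forcing divergence'' sub-case: to close off a condition when the computation diverges, I need to find a single extension $(\tau, Y)$ with $Y \subseteq X$ infinite such that $\Phi_e^B(Z; x)\uparrow$ for all $Z \in (\tau, Y)$ at the relevant $x$. This is where splitting computations (as introduced in the preliminaries) enter: if no infinite subset forces divergence, then one can build, uniformly in $B \oplus X$, a sequence of $(e,B)$-splittings producing the distinct values $\Phi_e^B(\cdot; x)$ along different extensions, and collating these recovers $A$ from $B \oplus X$. So in fact the cleanest organization is: \emph{either} there is an extension $(\tau, Y) \leq_M (\sigma, X)$ with $A \not\leq_T B \oplus Y$ already forcing $\Phi_e^B$ to disagree with $A$ or to diverge on some input, \emph{or} $B \oplus X$ computes $A$ via the splitting-search procedure, and the latter is impossible.

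The main obstacle I expect is not conceptual but bookkeeping: making sure that in every branch where we pass to an extension, the side condition $Y \subseteq X$ keeps $Y \leq_T X$ so that $A \not\leq_T B \oplus Y$ is preserved, and making sure the ``search'' procedure used to derive a contradiction genuinely halts on every input $x$ — this is exactly where the negation of the divergence sub-case is used, so the three sub-cases (disagreement found / divergence forced / splitting-search computes $A$) must be arranged to be exhaustive. Everything else — that $\Phi_e^B(Z)$ depends only on a finite initial segment of $Z$ and of $B$, so a finite $\tau$ really does decide the requirement for the whole cone $(\tau, Y)$ — is routine from the conventions fixed in the Computations subsection.
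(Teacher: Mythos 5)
Your argument is correct, but it is organized differently from the paper's. The paper's proof is a two-case dichotomy on whether $X$ contains an $(e,B)$-splitting pair over $\sigma$: if it does, the two split values differ at some $x$, so one of them already disagrees with $A(x)$, and appending that branch (and truncating $X$) gives the desired extension; if it does not, the paper takes $(\tau,Y)=(\sigma,X)$ \emph{unchanged}, because for any $Z\in(\sigma,X)$ a total $\Phi_e^B(Z)$ is computable from $B\oplus X$ (search for any convergent computation along a finite subset of $X$; absence of splittings makes the value unambiguous), hence differs from $A$, while a partial $\Phi_e^B(Z)$ differs from the total $A$ for free. Your trichotomy --- force a disagreement, force divergence at some input, or else compute $A$ from $B\oplus X$ and contradict $A\not\leq_T B\oplus X$ --- is the classical jump-forcing organization, and as you arrange it the three branches are exhaustive and each one either produces the required condition or is impossible, so the proof goes through. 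The divergence sub-case you flag as the delicate point is precisely what the paper's organization avoids: in the no-splitting case partiality needs no forcing at all, since a partial $\Phi_e^B(Z)$ is automatically $\neq A$. What your version buys is a first case stated directly in terms of a disagreeing finite extension (no splitting pairs needed there); what the paper's buys is a shorter proof with no divergence bookkeeping and no reductio, the second case satisfying the requirement without even extending the condition. One cosmetic remark: in your closing reorganization the appeal to a ``splitting-search'' is superfluous for your own setup --- once no disagreeing extension exists, the plain search for any convergent computation (which you described in the previous paragraph) already returns $A(x)$.
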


\begin{proof}
There are two cases.

\medskip

\emph{Case 1:} $X$ contains a pair $(\eta_0,\eta_1)$ which $(e,B)$-spits over $\sigma$.

Fix $i < 2$ and $x$ such that $\Phi^B_e(\sigma\eta_i; x) \downarrow \neq A(x)$. Let $\tau = \sigma\eta_i$ and $Y = X \cap (\max \eta_i, \infty)$. Then $(\tau,Y)$ is as desired.

\medskip

\emph{Case 2:} $X$ contains no pair $(e,B)$-splitting over $\sigma$.

If $Z \in [X]^\omega$ and $\Phi^B(Z)$ is total then $\Phi^B(Z) \leq_T B \oplus X$, and thus $\Phi^B(Z) \neq A$, since $A \not\leq_T B \oplus X$. So we can simply let $(\tau,Y) = (\sigma,X)$.
\end{proof}

The following theorem is an easy corollary of the above lemma and Theorem \ref{thm:DzJo}. Recall that an infinite set $C$ is \emph{cohesive} for a sequence $\vec{R} = (R_n: n < \omega)$, if and only if for each $n$ either $C \cap R_n$ or $C - R_n$ is finite. $\COH$, a consequence of $\RT^2_2$ introduced by Cholak, Jockusch and Slaman \cite[Statement 7.7]{Cholak.Jockusch.ea:2001.Ramsey}, asserts that every sequence admits a cohesive set.

\begin{theorem}[Folklore]\label{thm:COH.sca}
$\COH$ has the strong cone avoidance property.
\end{theorem}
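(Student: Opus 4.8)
The plan is to build the cohesive set by Mathias forcing along a single descending sequence of conditions, combining the reservoir‑shrinking idea with Lemma~\ref{lem:M-forcing.ca}. Fix $A \not\leq_T B$ and put $\mathcal{C} = \{Z : A \leq_T B \oplus Z\}$. Let $\vec{R} = (R_n : n < \omega)$ be an arbitrary instance of $\COH$; crucially, $\vec{R}$ is \emph{not} assumed to be cone‑avoiding, so $\vec{R}$ itself may compute a member of $\mathcal{C}$. I would construct Mathias conditions $(\emptyset,\omega) = (\sigma_0,X_0) \geq_M (\sigma_1,X_1) \geq_M \cdots$ maintaining the invariant $A \not\leq_T B \oplus X_s$ for every $s$, and then take $C = \bigcup_s \sigma_s$.

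The construction interleaves two kinds of steps. For the \emph{cohesiveness step} at $R_n$: given $(\sigma_s,X_s)$, view $x \mapsto R_n(x)$ as a $2$-coloring of $X_s$. Since $A \not\leq_T B \oplus X_s$, Theorem~\ref{thm:DzJo} relativized to $B \oplus X_s$ produces an infinite $H \subseteq X_s$ monochromatic for this coloring with $A \not\leq_T B \oplus X_s \oplus H$. Set $\sigma_{s+1} = \sigma_s \cup \{\min H\}$ and $X_{s+1} = H \setminus \{\min H\}$; then $(\sigma_{s+1},X_{s+1}) \leq_M (\sigma_s,X_s)$, the invariant persists, $|\sigma_{s+1}| > |\sigma_s|$, and $X_{s+1}$ lies entirely inside $R_n$ or entirely inside its complement. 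For the \emph{diagonalization step} for the requirement $\Phi_e^B(C) \neq A$: apply Lemma~\ref{lem:M-forcing.ca} to $(\sigma_s,X_s)$ to obtain $(\sigma_{s+1},X_{s+1}) \leq_M (\sigma_s,X_s)$ with $A \not\leq_T B \oplus X_{s+1}$ and $\Phi_e^B(Z) \neq A$ for all $Z \in (\sigma_{s+1},X_{s+1})$. Performing the cohesiveness step for each $n$ and the diagonalization step for each $e$ in some order completes the construction.

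To verify the conclusion: $C$ is infinite because the cohesiveness step occurs infinitely often and each occurrence strictly lengthens $\sigma_s$; $C$ is cohesive for $\vec{R}$ because after the cohesiveness step for $R_n$, say at stage $s$, we have $C - \sigma_{s+1} \subseteq X_{s+1}$, so $C$ is almost contained in $R_n$ or in $\omega - R_n$; and $A \not\leq_T B \oplus C$ because $C \in (\sigma_{s+1},X_{s+1})$ at each diagonalization stage, hence $\Phi_e^B(C) \neq A$ for every $e$. Thus $C$ is a solution computing no member of $\mathcal{C}$, regardless of the complexity of the original instance $X = \vec{R}$, which is exactly strong cone avoidance.

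The one step that carries all the weight is the cohesiveness step: shrinking the reservoir to one side of $R_n$ while keeping it from computing $A$ over $B$. This is where \emph{strong} cone avoidance of the pigeonhole principle (Theorem~\ref{thm:DzJo}) is indispensable — the coloring $R_n$ can be arbitrarily complicated, yet we still extract a monochromatic subset that is cone‑avoiding. With only ordinary cone avoidance for the pigeonhole principle the argument would collapse for sequences $\vec{R}$ computing members of $\mathcal{C}$, yielding at best the non‑strong cone avoidance property. Everything else is the routine bookkeeping of a Mathias‑forcing construction, so I expect no further obstacles.
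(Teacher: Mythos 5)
Your construction is correct and is exactly the argument the paper intends: the theorem is stated as an easy corollary of Lemma~\ref{lem:M-forcing.ca} together with Theorem~\ref{thm:DzJo}, i.e.\ a Mathias forcing iteration alternating pigeonhole-based reservoir shrinking (where strong cone avoidance handles the arbitrarily complicated $R_n$) with diagonalization steps. Your bookkeeping, including adding $\min H$ to the stem so the generic is infinite, matches the standard proof, so there is nothing to add.
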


\section{The Achromatic Ramsey Theorem}\label{s:WRT.sca}

In this section, we prove that $\ART^r_{<\infty,d}$ has the strong cone avoidance property for appropriate $d$.

\begin{theorem}\label{thm:WRT.sca}
For each $r > 0$, there exists $d$ such that $\ART^r_{<\infty,d}$ has the strong cone avoidance property. Hence, $\ART^r_{<\infty,d} \not\vdash \ACA$ for sufficiently large $d$.
\end{theorem}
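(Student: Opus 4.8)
The plan is to prove the statement by induction on the exponent $r$, following the zig-zag pattern advertised in the introduction: strong cone avoidance for $\ART^{<r}$ will yield (ordinary) cone avoidance for $\ART^r_{<\infty,d}$, which in turn feeds back into a Mathias forcing argument to produce strong cone avoidance for $\ART^r_{<\infty,d}$. Throughout, fix $A \not\leq_T B$ and an arbitrary coloring $f : [\omega]^r \to c$, where we may assume $B \oplus f$ does not compute $A$ (otherwise replace $B$ by $B \oplus f$ at the cost of considering only instances with parameter $B$; strong omitting is exactly what lets us absorb $f$ this way). We want an infinite $H$ with $|f([H]^r)| \leq d$ and $B \oplus H \not\geq_T A$.

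The base case $r = 1$ is the infinite pigeonhole principle, so $\ART^1_{c,0}$ already has strong cone avoidance by Theorem~\ref{thm:DzJo} (take $d = 0$). For the inductive step, first apply Theorem~\ref{thm:COH.sca}: let $\vec{R} = (R_\sigma : \sigma \in [\omega]^{<\omega})$ be a uniformly $(B\oplus f)$-computable listing of the sets $R_\sigma = \{n : f(\sigma\langle n\rangle \tau) \text{ is determined by }\sigma\text{ in a suitable sense}\}$ — more precisely, one codes, for each $(r-1)$-tuple $\sigma$ and each color $i < c$, the set $\{n > \max\sigma : f(\sigma\langle n\rangle) = i\}$, and one takes a cohesive set $C$ for this family with $B \oplus C \not\geq_T A$. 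On such a $C$, the coloring $f$ restricted to tuples whose top element ranges over $C$ stabilizes: there is an $(r-1)$-ary ``limit'' coloring $\tilde f : [C]^{r-1} \to c$ with $\tilde f \leq_T (B\oplus f)' \uh C$ or, better, $\tilde f \leq_T B \oplus C \oplus f$ after relativizing appropriately, and crucially $f([H]^r) \subseteq \tilde f([H]^{r-1})$ for any $H \subseteq C$ that is sufficiently sparse relative to the stabilization. Now apply the induction hypothesis (strong cone avoidance for $\ART^{r-1}_{<\infty,d'}$) to $\tilde f$ over the parameter $B \oplus C$: this gives $d'$ and an infinite $H_0 \subseteq C$ with $|\tilde f([H_0]^{r-1})| \leq d'$ and $(B\oplus C) \oplus H_0 \not\geq_T A$; thinning $H_0$ inside $C$ to respect stabilization yields $H$ with $|f([H]^r)| \leq d'$, so $d = d'$ works and we even get cone avoidance. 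The thinning is where Mathias forcing enters: we build $H$ as a Mathias-generic subset of $C$, using Lemma~\ref{lem:M-forcing.ca} (and a $\Pi^0_1$-class / splitting argument in the style of Seetapun) to meet the requirements $\Phi_e^B(H) \neq A$ while keeping, at each stage, the reservoir inside the stabilization region of $C$.

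To upgrade from cone avoidance to \emph{strong} cone avoidance for $\ART^r_{<\infty,d}$ — which is what the zig-zag needs for the next exponent — one reruns the above but now with $f$ itself of arbitrary complexity rather than computable from $B$. The point is that $\COH$ has \emph{strong} cone avoidance (Theorem~\ref{thm:COH.sca}), so we still get a cohesive $C$ with $B \oplus C \not\geq_T A$ for the $(B\oplus f)$-computable family; the limit coloring $\tilde f$ is computed by $B \oplus C \oplus f$, but at the next application we invoke cone avoidance of $\ART^{r-1}$ \emph{over the parameter $B \oplus C$ with instance $\tilde f$}, and since $A \not\leq_T B \oplus C$ this is legitimate and returns a solution $H$ with $(B\oplus C)\oplus H \not\geq_T A$, hence $B \oplus H \not\geq_T A$. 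So the induction hypothesis actually used in the step is the strong form at exponent $r-1$ to get past $\COH$, plus the ordinary form at $r-1$ to handle the now-noncomputable $\tilde f$ — exactly the zig-zag. The final sentence, $\ART^r_{<\infty,d} \not\vdash \ACA$ for large $d$, follows by the standard model-construction: iterate strong $\mathcal{C}$-omitting for $\mathcal{C} = \{Z : \emptyset' \leq_T Z\}$ to build a countable $\omega$-model of $\RCA + \ART^r_{<\infty,d}$ all of whose reals fail to compute $\emptyset'$, whence $\ACA$ fails in it.

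The main obstacle, as I see it, is controlling the size of the color set through the limit-coloring passage: one must verify that the cohesive/stabilization bookkeeping does not blow up the number of colors — i.e., that $|f([H]^r)|$ is genuinely bounded by $|\tilde f([H]^{r-1})|$ for the $H$ actually produced, which forces the Mathias reservoir to be thinned so that every relevant $(r-1)$-tuple has already ``converged'' before its extensions are admitted. Making this compatible with the $\Pi^0_1$-class machinery that enforces $\Phi_e^B(H) \neq A$ — in particular, ensuring the reservoir stays infinite and low-complexity after both the stabilization thinning and the splitting-tree pruning — is the delicate technical core, and is presumably what the rest of \S\ref{s:WRT.sca} is devoted to.
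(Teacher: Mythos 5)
Your treatment of the cone-avoidance half of the induction is essentially the paper's Lemma \ref{lem:WRT.ca}: stabilize along a cohesive set, pass to the limit coloring on $(r-1)$-tuples, apply \emph{strong} cone avoidance at exponent $r-1$ to that limit coloring, and then thin inside the cohesive set using $f$-computations. Note, though, that in your summary "the strong form at exponent $r-1$ to get past $\COH$, plus the ordinary form at $r-1$ to handle the now-noncomputable $\tilde f$" the roles are exactly reversed: it is the limit coloring, being a limit and hence not computable from the given data, that forces the \emph{strong} hypothesis, while $\COH$ only needs to be applied to an $f$-computable family. This half is fine when $f$ is part of the parameter.

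The genuine gap is in your upgrade to \emph{strong} cone avoidance at exponent $r$, which is the actual content of the theorem. You propose to "rerun the above" with $f$ arbitrary, but two steps then collapse. First, invoking \emph{cone avoidance} of $\ART^{r-1}$ "over the parameter $B\oplus C$ with instance $\tilde f$" is not legitimate: cone avoidance requires $A \not\leq_T B \oplus C \oplus \tilde f$, and $\tilde f$ is manufactured from the arbitrary $f$, which may compute $A$ or even be $A$; likewise your opening reduction "replace $B$ by $B\oplus f$" is unavailable, since strong omitting is precisely the assertion that $f$ cannot be absorbed into the parameter. Second, and more fundamentally, the final selection of $H$ inside $C$ (admitting a new element only after all relevant $(r-1)$-prefixes have converged to their limit colors) is an $f$-computable search; when $f$ is arbitrary this destroys cone avoidance of $H$, and Mathias genericity via Lemma \ref{lem:M-forcing.ca} does not repair it, because deciding which extensions are acceptable itself queries $f$. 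This is exactly the obstacle the remainder of \S\ref{s:WRT.sca} is built to overcome: the paper first constructs a cone-avoiding $D$ together with families $\Theta_k$ controlling the colors $f(\rho\tau)$ for $\rho \in [D]^k$ and $\tau$ far out in $D$ (using strong $\ART^{r-k}$ and then strong $\ART^k$ applied to $\rho \mapsto \theta_\rho$), then builds $G \subseteq D$ in finite blocks $\xi_n$ with $|f([\xi_n]^r)| \leq d_{r-1}$ by a Seetapun-style forcing over the $\Pi^0_1$ class of \emph{all} $c$-colorings, so that queries to $f$ are replaced by a cone-avoiding member $g$ of that class, and finally applies the pigeonhole principle to the block color-sets. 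The price is that the bound degrades to $d = d_{r-1} + \sum_{0<k<r} d_k d_{r-k}$; your claim that the same $d'$ from exponent $r-1$ suffices is unsupported and cannot be right in general, since $d$ must grow with $r$ (for instance $\ART^r_{c,1}$ is just $\RT^r_c$, and Dorais et al.\ show $\ART^{r+1}_{2^r,2^r-1} \vdash \ACA$). Also, the base case should be $d_1 = 1$, not $d = 0$: $|f([H]^1)| \leq 0$ is impossible.
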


Clearly, the second part of Theorem \ref{thm:WRT.sca} is a consequence of the first part. Before proving the first part of Theorem \ref{thm:WRT.sca}, we present some easy corollaries.

\begin{theorem}\label{thm:TS}
$\TS$ has the strong cone avoidance property. Thus $\TS \not\vdash \ACA$.
\end{theorem}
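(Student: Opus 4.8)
The plan is to derive $\TS$ (for all exponents simultaneously) from the strong cone avoidance property of $\ART$ stated in Theorem~\ref{thm:WRT.sca}, and to read off $\TS \not\vdash \ACA$ as the usual consequence. First I would observe that a thin set is exactly an achromatic set with $d$ colors collapsing to one missing color: given $f : [\omega]^r \to \omega$, if $H$ is infinite with $|f([H]^r)| \leq d$, then $f([H]^r)$ is a finite set of size at most $d$, so as long as $\omega$ has more than $d$ elements available — which it does — one can further thin $H$ to avoid at least one color. More carefully, the right move is to first replace $f$ by a finite coloring: fix the $r$ from Theorem~\ref{thm:WRT.sca} and the corresponding $d$ for which $\ART^r_{<\infty,d}$ has strong cone avoidance. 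Since $\ART^r_{<\infty,d}$ quantifies over all finite color counts $c$, in particular $\ART^r_{d+1,d}$ holds, so any $f : [\omega]^r \to d+1$ admits an infinite $H$ with $|f([H]^r)| \leq d$, hence $f([H]^r) \subsetneq d+1$, i.e. $H$ is thin for $f$ viewed as a map into $d+1 \subseteq \omega$.

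The issue is that a general $\TS^r$-instance $f : [\omega]^r \to \omega$ need not have finite range. To handle this I would compose with a suitable finite partition of the color space. Here the cohesiveness-type ingredient enters: using Theorem~\ref{thm:COH.sca}, pass to a cohesive set $C$ (for a sequence of sets $f$-computable enough to decide, for each $\sigma \in [\omega]^r$ built from $C$, in which block of size-$(d+1)$ intervals $f(\sigma)$ falls) so that on $[C]^r$ the coloring $f$ mod (something) becomes a genuine $(d+1)$-coloring; more simply, one can argue directly that thinness is inherited: if $g = f$ restricted to $[C]^r$ still has infinite range we want a further thin subset. Actually the cleanest route, and the one I expect the author to take, is: given $f:[\omega]^r\to\omega$, define $\tilde f(\sigma) = \min(f(\sigma), d)$, a coloring into $d+1 = \{0,\dots,d\}$; apply $\ART^r_{d+1,d}$ (a special case of $\ART^r_{<\infty,d}$) inside the strong-cone-avoidance framework to get an infinite $H$, cone-avoiding over the input, with $\tilde f([H]^r) \neq d+1$. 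If the missing value is some $j < d$, then $f$ omits $j$ on $[H]^r$, so $H$ is thin for $f$. If the only missing value is $d$ itself, then $\tilde f([H]^r) \subseteq \{0,\dots,d-1\}$, which forces $f([H]^r) \subseteq \{0,\dots,d-1\}$, again a proper subset of $\omega$, so $H$ is thin. Either way we win, and since the $H$ produced by strong cone avoidance computes no new cones over $f \oplus (\text{parameters})$, we get strong cone avoidance for $\TS^r$ for every $r$.

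Finally, to conclude $\TS \not\vdash \ACA$: strong cone avoidance property for each $\TS^r$ gives cone avoidance property, and then the standard construction — iteratively closing off a Turing ideal under $\TS$-solutions while preserving the fact that the halting problem lies outside the cone generated by the ideal over a fixed oracle — produces an $\omega$-model of $\RCA + \TS$ that omits a computable instance of $\ACA$, so $\ACA$ fails in it. The main obstacle, as I see it, is purely bookkeeping: verifying that the reduction $f \mapsto \tilde f$ together with the case analysis on the missing color genuinely transfers \emph{strong} cone avoidance (not just plain cone avoidance) from $\ART^r_{<\infty,d}$ to $\TS^r$, uniformly in $r$, and that no hypothesis about the complexity of $f$ is smuggled in — but this should follow immediately from the definition of strong $\mathcal{C}$-omitting in Definition~\ref{def:Omitting}, since $\tilde f \leq_T f$ and the solution $H$ is required to avoid the cone regardless of where $f$ itself sits.
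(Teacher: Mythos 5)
Your proposal is correct and, after discarding the cohesiveness detour, lands on exactly the paper's argument: truncate $f$ to $g(\sigma)=\min\{d,f(\sigma)\}$, apply strong cone avoidance of $\ART^r_{d+1,d}$ (an instance of $\ART^r_{<\infty,d}$ from Theorem \ref{thm:WRT.sca}) to get a cone-avoiding infinite $H$ with $|g([H]^r)|\leq d$, and note that $H$ is then thin for $f$ regardless of which color is omitted. The concluding step, from strong cone avoidance to $\TS\not\vdash\ACA$ via a Turing ideal omitting the jump, is the same standard construction the paper invokes.
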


\begin{proof}
Fix $X \not\leq_T Y$ and $f: [\omega]^r \to \omega$ with $r > 0$. By the strong cone avoidance of the Achromatic Ramsey Theorem, let $d$ be such that $\ART^r_{d+1,d}$ has the strong cone avoidance property. For each $\sigma \in [\omega]^r$, let $g(\sigma) = \min\{d,f(\sigma)\}$. So, $g: [\omega]^r \to d+1$. Pick $Z \in [\omega]^\omega$ such that $X \not\leq Y \oplus Z$ and $|g([Z]^r)| \leq d$. Then $Z$ is clearly thin for $f$. So, $\TS$ has the strong cone avoidance property.

Hence, $\TS \not\vdash \ACA$.
\end{proof}

As with many other consequences of Ramsey's Theorem, the Achromatic Ramsey Theorem also obeys the bounds of Jockusch in Theorem \ref{thm:Jo.bounds}.

\begin{proposition}\label{prp:WRT.bounds}
Fix $r \geq 2$, $c \geq 2$ and $d > 0$.
\begin{enumerate}
 \item For each computable $f: [\omega]^r \to c$, there exists an infinite $H \in \Pi^0_r$ such that $|f([H]^r)| \leq d$.
 \item There exists a computable $g: [\omega]^r \to d+1$ such that no infinite $H \in \Sigma^0_r$ can have $|f([H]^r)| \leq d$.
\end{enumerate}
\end{proposition}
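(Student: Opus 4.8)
The statement to prove is Proposition~\ref{prp:WRT.bounds}, which says Achromatic Ramsey Theorems obey Jockusch's $\Pi^0_r/\Sigma^0_r$ bounds. This is really two independent claims, and I would handle them separately.

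\medskip

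\textbf{Part (1): the upper bound.} The plan is to reduce directly to Jockusch's Theorem~\ref{thm:Jo.bounds}. Given a computable $f\colon[\omega]^r\to c$, Ramsey's Theorem provides an infinite homogeneous set $H$, and Jockusch's theorem gives one in $\Pi^0_r$. For such an $H$ we have $|f([H]^r)|=1\le d$ since $d>0$, so $H$ witnesses the achromatic conclusion as well. Thus nothing beyond Theorem~\ref{thm:Jo.bounds} is needed; one only observes that a homogeneous set is a fortiori an achromatic set (for any $d\ge 1$). I would spell this out in one or two sentences.

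\medskip

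\textbf{Part (2): the lower bound.} Here the work is to produce a computable $g\colon[\omega]^r\to d+1$ with no infinite $\Sigma^0_r$ achromatic set. The natural approach is to take the computable $2$-coloring $h\colon[\omega]^r\to 2$ provided by Jockusch's theorem (no infinite $\Sigma^0_r$ homogeneous set) and ``inflate'' it to a $(d+1)$-coloring whose achromatic sets are forced to be homogeneous for $h$. The cleanest device: partition $[\omega]^r$ into pieces on which $g$ is designed so that any set using at most $d$ colors of $g$ must be monochromatic for $h$. One way is to let $g$ encode, on $[\omega]^r$, simultaneously the value $h(\sigma)$ and enough additional ``spreading'' information that an infinite set taking $\le d$ values of $g$ collapses to an $h$-homogeneous set. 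Concretely, I would iterate the construction: first get $g_1$ computable into $2$ with no infinite $\Sigma^0_r$ set taking $\le 1$ value (that is just Jockusch's $h$); then, assuming a computable $g_k$ into $k+1$ with no infinite $\Sigma^0_r$ set taking $\le k$ values, build $g_{k+1}$ into $k+2$ so that any infinite set taking $\le k+1$ values of $g_{k+1}$ yields (computably, or at least within $\Sigma^0_r$) an infinite set taking $\le k$ values of $g_k$ on a subset --- a contradiction. The pairing is arranged by a bijection $\omega \cong \omega\times 2$ coordinatizing the domain, coloring one ``copy'' by $g_k$-values shifted up and the other by the new top color, so that avoiding the new color on an infinite set lands you inside a single copy where $g_k$ still has $\le k$ values. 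After $d$ steps we obtain the desired $g=g_d\colon[\omega]^r\to d+1$.

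\medskip

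\textbf{Anticipated main obstacle.} The delicate point in Part~(2) is making the reduction ``from $\le k+1$ colors of $g_{k+1}$ to $\le k$ colors of $g_k$'' go through \emph{uniformly} and within the right complexity class, i.e.\ ensuring the subset one extracts is still infinite and that a $\Sigma^0_r$ achromatic set for $g_{k+1}$ really produces a $\Sigma^0_r$ (or $\Pi^0_r$, as needed for Jockusch's negative result to bite) achromatic set for $g_k$. One must be careful that taking $\le k+1$ values genuinely forces missing the top color on a cofinal part rather than merely on a finite part; a pigeonhole/pinching argument on which color class is avoided handles this, but it needs that the ``copies'' are arranged so that an infinite set meeting both copies infinitely is forced to use the top color --- which is exactly how the coloring on cross-pairs (tuples with elements in both copies) must be defined. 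Getting this cross-pair coloring right (so it eats up a color whenever a set straddles two copies) is the crux, and I would present it as the content of an auxiliary lemma and then close the proof by the $d$-fold iteration, invoking Theorem~\ref{thm:Jo.bounds} at the base case.
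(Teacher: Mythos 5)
Your Part (1) is fine and is exactly the paper's argument: Jockusch's $\Pi^0_r$ infinite homogeneous set $H$ has $|f([H]^r)|=1\le d$, so nothing more is needed there.

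Part (2), however, has a genuine gap: the two-copy inductive construction does not work, and the difficulty is not only the one you flag. If you split $\omega$ into computable copies $A_0,A_1$ and colour within-copy $r$-tuples by (a shift of) $g_k$ and all other tuples by the new top colour, then each $A_j$ is itself an infinite computable (hence $\Sigma^0_r$) set on whose $r$-tuples $g_{k+1}$ takes at most $k+1$ values; so $A_j$ already defeats the inductive claim, no straddling ever occurs, and the cross-tuple colouring never gets to ``eat up'' a colour. More generally, your extraction step collapses precisely in the case where the achromatic set omits the new top colour: then all you know is that it uses at most $k+1$ of the old colours, which is vacuous, and there is no way to pass to an infinite $\Sigma^0_r$ subset using at most $k$ values of $g_k$. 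What (2) really requires is the stronger fact that there is a computable colouring such that \emph{every} infinite $\Sigma^0_r$ set realizes \emph{all} of its colours, and this does not follow by black-boxing the two-colour case of Theorem \ref{thm:Jo.bounds}. The paper obtains it by citing the lower bound of Cholak, Giusto, Hirst and Jockusch for Thin Set Theorems: there is a computable $h:[\omega]^r\to\omega$ admitting no infinite $\Sigma^0_r$ thin set. Then, truncating as in the proof of Theorem \ref{thm:TS}, put $g=\min\{d,h\}$, a computable $(d+1)$-colouring; any infinite $H$ with $|g([H]^r)|\le d$ omits some $g$-colour and is therefore thin for $h$, so no such $H$ can be $\Sigma^0_r$. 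If you want to avoid quoting that result, you would have to rerun a Jockusch-style construction directly for $d+1$ colours (arranging that every candidate infinite $\Sigma^0_r$ set realizes every colour), rather than iterate the two-colour statement.
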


\begin{proof}
(1) follows easily from Theorem \ref{thm:Jo.bounds}.

On the other hand, Cholak et al. \cite[Theorem 4.1]{Cholak.Giusto.ea:2005.freeset} defined a computable $h: [\omega]^r \to \omega$ which admits no infinite thin set in $\Sigma^0_r$. So (2) follows from this known bound and the proof of Theorem \ref{thm:TS}.
\end{proof}

\begin{corollary}
For $r > 2$ and $c > d > 0$, $\RT^2_2 \not\vdash \ART^r_{c,d}$. Consequently, $\ART^3_{4,3}$ is strictly between $\RT^2_2$ and $\RT^3_2$ and $\ART^3_{3,2}$ is strictly between $\RT^2_{<\infty}$ and $\RT^3_2$, where $\RT^2_{<\infty}$ is $\forall n \RT^2_n$.
\end{corollary}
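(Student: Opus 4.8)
The plan is to obtain both ``strictly between'' statements from two non-implications together with some easy facts. First, $\RT^3_2$ is equivalent to $\ACA$ over $\RCA$ (Jockusch \cite{Jockusch:1972.Ramsey}), and $\ACA$ proves $\RT^r_c$ for every $r,c$, hence proves $\ART^r_{c,d}$ for every $d\ge 1$ (a homogeneous set uses a single colour); also $\RCA$ proves $\RT^2_{<\infty} \leftrightarrow \RT^2_2$. So it suffices to prove (a) $\RT^2_2 \not\vdash \ART^r_{c,d}$ whenever $r > 2$ and $c > d > 0$, and (b) $\ART^3_{3,2} \not\vdash \RT^3_2$ and $\ART^3_{4,3} \not\vdash \RT^3_2$.

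For (a), fix $r > 2$ and $c > d > 0$, so $c \ge d+1$. By Proposition \ref{prp:WRT.bounds}(2) there is a computable $g \colon [\omega]^r \to d+1$ admitting no infinite $H \in \Sigma^0_r$ with $|g([H]^r)| \le d$; since $c \ge d+1$, $g$ is a legitimate instance of $\ART^r_{c,d}$ when read as a $c$-colouring. I would then invoke the theorem of Cholak, Jockusch and Slaman \cite{Cholak.Jockusch.ea:2001.Ramsey} that $\RCA + \RT^2_2$ has an $\omega$-model $\mathcal{M}$ whose sets are all $\low_2$. Every $\low_2$ set is computable from $\emptyset''$, hence lies in $\Delta^0_3$, and $\Delta^0_3 \subseteq \Sigma^0_r$ since $r \ge 3$; so $\mathcal{M}$ has no infinite $H$ with $|g([H]^r)| \le d$, while $g$, being computable, lies in $\mathcal{M}$. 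Thus $\mathcal{M} \not\models \ART^r_{c,d}$, and therefore $\RCA + \RT^2_2 \not\vdash \ART^r_{c,d}$. Taking $(r,c,d)$ to be $(3,4,3)$ and $(3,3,2)$ yields $\RT^2_2 \not\vdash \ART^3_{4,3}$ and $\RT^2_{<\infty} \not\vdash \ART^3_{3,2}$.

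For (b), I would use Theorem \ref{thm:WRT.sca}: its proof, for $r = 3$, gives strong cone avoidance for $\ART^3_{<\infty,d}$ already at $d = 2$ (the value $d = 1$ cannot work, as $\ART^3_{<\infty,1} = \forall c\,\RT^3_c$ is equivalent to $\ACA$). Since $\ART^3_{<\infty,2}$ has $\ART^3_{3,2}$ as its $c = 3$ instance and proves $\ART^3_{4,3}$ through its $c = 4$ instance (an infinite set using $\le 2$ colours uses $\le 3$), both $\ART^3_{3,2}$ and $\ART^3_{4,3}$ inherit strong cone avoidance and hence do not prove $\ACA$, i.e.\ do not prove $\RT^3_2$. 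Combining (a), (b) and the opening remarks shows that $\ART^3_{4,3}$ is a strict consequence of $\RT^3_2$ not provable from $\RT^2_2$, and $\ART^3_{3,2}$ is a strict consequence of $\RT^3_2$ not provable from $\RT^2_{<\infty}$. (One can also note that $\ART^3_{3,2} \vdash \ART^3_{4,3}$ over $\RCA$, by merging the two largest colours of a $4$-colouring, so the second principle is at least as strong as the first; this is not needed.)

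The substantive ingredients, Proposition \ref{prp:WRT.bounds}(2) and Theorem \ref{thm:WRT.sca}, are already available, so the remaining work is bookkeeping. The point needing the most care is verifying that the proof of Theorem \ref{thm:WRT.sca} for $r = 3$ really gives strong cone avoidance down to $d = 2$, which is what covers $\ART^3_{3,2}$; were only a larger bound $d_0(3)$ in hand, $\ART^3_{4,3} \not\vdash \ACA$ would still follow when $d_0(3) \le 3$, while $\ART^3_{3,2} \not\vdash \ACA$ would be extracted directly from the model construction inside that proof, restricted to $3$-colourings of triples. A minor caveat of phrasing: ``strictly between'' is meant in the loose sense (strictly below $\RT^3_2$, and not a consequence of $\RT^2_2$ or $\RT^2_{<\infty}$), not as the stronger claim that $\ART^3_{c,d}$ proves $\RT^2_2$.
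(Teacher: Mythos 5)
Your handling of the headline non-implication coincides with the paper's: the Cholak--Jockusch--Slaman construction \cite{Cholak.Jockusch.ea:2001.Ramsey} gives an $\omega$-model of $\RCA+\RT^2_2$ all of whose sets are $\low_2$, hence $\Delta^0_3$, and Proposition \ref{prp:WRT.bounds}(2) (with the observation that $c\geq d+1$, so the $(d+1)$-colouring is an instance of $\ART^r_{c,d}$) shows this model fails $\ART^r_{c,d}$ for $r\geq 3$. However, your route to the $\RT^2_{<\infty}$ case leans on the claim that $\RCA\vdash\RT^2_{<\infty}\leftrightarrow\RT^2_2$, which is not a theorem of $\RCA$ (the colour-merging argument only handles each fixed standard number of colours, and $\RT^2_{<\infty}$ is in fact stronger over $\RCA$, e.g.\ in first-order strength). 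The repair is what the paper does: the same $\omega$-model satisfies $\RT^2_k$ for every $k$, hence $\RT^2_{<\infty}$, so one gets $\RT^2_{<\infty}\not\vdash\ART^3_{3,2}$ directly from the model, not from any equivalence.

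The more serious divergence concerns what ``strictly between'' rests on. The paper obtains the content of that phrase from the implications $\ART^3_{4,3}\vdash\RT^2_2$ and $\ART^3_{3,2}\vdash\RT^2_{<\infty}$, cited from Dorais et al.\ \cite[\S 5]{Dorais.Dzhafarov.ea:2013}, together with the non-implications above and the trivial $\RT^3_2\vdash\ART^3_{c,d}$; these lower-end implications are precisely what you declined to prove. In their place you attempt strictness at the upper end, and your step (b) has a genuine gap: the proof of Theorem \ref{thm:WRT.sca} does \emph{not} give strong cone avoidance of $\ART^3_{<\infty,2}$. The recursion (A1)--(A3) of \S\ref{s:WRT.sca} yields $d_1=1$, $d_2=d_1+d_1d_1=2$ and, for $r=3$, $d=d_2+2d_1d_2=6$ (the Schr\"{o}der number $S_2$); nothing in the construction lowers this to $2$ or $3$, and you give no argument beyond excluding $d=1$. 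Moreover it cannot be lowered that far: as recorded in \S\ref{s:Questions}, Dorais et al.\ prove $\ART^3_{4,3}\vdash\ACA$, and $\ART^3_{<\infty,2}$ implies $\ART^3_{4,3}$ (a set realizing at most $2$ of $4$ colours realizes at most $3$), so strong cone avoidance at $d=2$ for exponent $3$ is impossible, and your conclusion $\ART^3_{4,3}\not\vdash\RT^3_2$ contradicts a result the paper itself cites. Note also that the paper's own proof never establishes $\ART^3_{4,3}\not\vdash\RT^3_2$ or $\ART^3_{3,2}\not\vdash\RT^3_2$; what it proves is that each principle is a consequence of $\RT^3_2$ which implies, but is not implied by, the corresponding lower principle. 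So the part of the statement you reinterpreted away is the part the paper actually proves, and the part you added is unsupported and, for $\ART^3_{4,3}$, inconsistent with \cite[Proposition 5.5]{Dorais.Dzhafarov.ea:2013}.
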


\begin{proof}
By relativizing Theorem 3.1 of Cholak, Jockusch and Slaman \cite{Cholak.Jockusch.ea:2001.Ramsey}, there exists an $\omega$-model $\mathcal{M}$ of $\RCA + \RT^2_2$ containing only $\Delta^0_3$ sets. By Proposition \ref{prp:WRT.bounds}(2), $\mathcal{M} \not\models \ART^r_{c,d}$.

The above $\omega$-model is also a model of $\RT^2_{<\infty}$. On the other hand, Dorais et al. \cite[\S 5]{Dorais.Dzhafarov.ea:2013} prove that $\ART^3_{4,3} \vdash \RT^2_2$ and $\ART^3_{3,2} \vdash \RT^2_{<\infty}$.
\end{proof}

We prove the first part of Theorem \ref{thm:WRT.sca} by induction on $r$, that $\ART^r_{<\infty,d}$ has the strong cone avoidance property for sufficiently large $d$. The induction proceeds in a zigzag way:
\begin{enumerate}
 \item[(A1)] As the infinite pigeonhole principle has the strong cone avoidance property, we get the strong cone avoidance property of $\ART^1_{<\infty, 1}$.
 \item[(A2)] Fix $(d_k: 0 < k < r)$ such that $\ART^k_{<\infty,d_k}$ has the strong cone avoidance property, for each $k \in (0,r)$. Firstly we prove that $\ART^r_{<\infty,d_{r-1}}$ has the cone avoidance property.
 \item[(A3)] Then we prove that $\ART^r_{<\infty,d}$ has the \emph{strong} cone avoidance property for
  $$
      d = d_{r-1} + \sum_{0 < k < r} d_k d_{r-k}.
  $$
\end{enumerate}

(A1) is trivial. (A2) is accomplished by the lemma below.

\begin{lemma}\label{lem:WRT.ca}
If $\ART^n_{c,e}$ has the strong cone avoidance property, then $\ART^{n+1}_{c,e}$ has the cone avoidance property.
\end{lemma}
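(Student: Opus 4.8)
The plan is to follow the Cholak--Jockusch--Slaman analysis of Seetapun's theorem, as advertised in the introduction: replace $f$ by its ``limit coloring'' on a cohesive-like set, thereby dropping the exponent by one, apply the induction hypothesis to that coloring, and then recover a solution of $\ART^{n+1}_{c,e}$ by a Mathias forcing carried out inside the resulting set. Fix $A \not\leq_T B$ and an instance $f\colon [\omega]^{n+1}\to c$ with $f \leq_T X$ and $A \not\leq_T B \oplus X$; we seek an infinite $H$ with $|f([H]^{n+1})| \leq e$ and $A \not\leq_T B \oplus X \oplus H$. First, applying Theorem~\ref{thm:COH.sca} over $B \oplus X$ to the $X$-computable family $\langle R_{\sigma,j}: \sigma \in [\omega]^n,\ j<c\rangle$ with $R_{\sigma,j} = \{y > \max\sigma : f(\sigma^\frown\langle y\rangle)=j\}$, I obtain an infinite $C$ cohesive for this family with $A \not\leq_T B \oplus X \oplus C$. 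Cohesiveness makes $\tilde f(\sigma) := \lim_{y\in C} f(\sigma^\frown\langle y\rangle)$ well defined for all $\sigma \in [C]^n$, so $\tilde f$ is a $c$-coloring of $[C]^n$, and for each such $\sigma$ there is a threshold $t(\sigma)$ beyond which every $y \in C$ satisfies $f(\sigma^\frown\langle y\rangle)=\tilde f(\sigma)$.

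Identifying $C$ with $\omega$ along its increasing enumeration, I next apply strong cone avoidance of $\ART^n_{c,e}$ \emph{over the set} $B \oplus X \oplus C$ to the coloring $\tilde f$ of $[\omega]^n$; this is exactly the place where strongness is indispensable, since $\tilde f$ obeys no complexity bound. This produces an infinite $\tilde H \subseteq C$ with $|\tilde f([\tilde H]^n)| \leq e$ and $A \not\leq_T B \oplus X \oplus C \oplus \tilde H$. It then suffices to produce an infinite $H \subseteq \tilde H$ that is \emph{good}, meaning that for every $\sigma \in [H]^n$ the value $f(\sigma^\frown\langle y\rangle)$ is constant for $y \in \{y \in H : y > \max\sigma\}$: for an infinite good $H$ this constant is a sub-limit of the sequence defining $\tilde f(\sigma)$, hence equals $\tilde f(\sigma)$, whence $f([H]^{n+1}) \subseteq \tilde f([\tilde H]^n)$ has at most $e$ elements and $A \not\leq_T B \oplus X \oplus H$ will follow from the forcing.

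I would build $H$ by a Mathias forcing inside $\tilde H$ whose conditions are pairs $(\rho, Z)$ with $\rho \in [\tilde H]^{<\omega}$ good, $Z$ a tail of $\tilde H$ above $\max\rho$, and, for each $\sigma \in [\rho]^n$, $f(\sigma^\frown\langle y\rangle)$ constant as $y$ ranges over $Z \cup \{y \in \rho : y > \max\sigma\}$. Any such condition extends by adjoining one fresh point of $Z$ to $\rho$ and shrinking $Z$ to a tail past the finitely many new thresholds $t(\cdot)$ (the fresh point is the largest element of each $n$-tuple it creates, so no new constraint falls on the stem), so the generic $H$ is infinite. Since every reservoir is a tail of $\tilde H$, hence Turing below $\tilde H$, the generic stays cone-avoiding \emph{once} we meet, for each index $e'$, the requirement $\Phi^{B\oplus X}_{e'}(H) \neq A$. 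That requirement is treated exactly as in Lemma~\ref{lem:M-forcing.ca}: given $(\rho,Z)$, either some pair of finite subsets of $Z$ is $(e',B\oplus X)$-splitting over $\rho$, and we extend along the branch whose value disagrees with $A$; or no such pair exists, in which case $\Phi^{B\oplus X}_{e'}$ takes a single value along all extensions by finite subsets of $Z$, a value computable from $B \oplus X \oplus \tilde H$ and so not equal to $A$ because $A \not\leq_T B \oplus X \oplus \tilde H$.

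The point I expect to be the main obstacle is reconciling this splitting search with the goodness constraint on stems: a splitting pair $(\eta_0,\eta_1)$ of arbitrary finite subsets of $Z$ need not yield good stems $\rho\eta_i$, since a later point of $\eta_i$ need not lie beyond $t(\sigma)$ for an $n$-tuple $\sigma$ formed from $\rho$ and an earlier point of $\eta_i$. As in Cholak--Jockusch--Slaman, the resolution is that only the bits of $\eta_i$ below the use of the splitting computation matter, so after truncating $\eta_i$ to that use one obtains short extensions for which the offending constraints are vacuous, and one then returns to a genuine good condition by adjoining the truncated $\eta_i$ to the stem and passing to a tail of $\tilde H$ beyond all thresholds of the $n$-tuples inside the enlarged stem. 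Verifying that this truncation-and-repair step preserves the disagreement with $A$ while restoring a good, cone-avoiding condition is the technical heart; the remaining bookkeeping -- the genericity argument and the threshold housekeeping -- is routine.
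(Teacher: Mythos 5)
Your first two steps coincide with the paper's: the cohesive set obtained from Theorem \ref{thm:COH.sca}, the limit coloring $\tilde f$ on $n$-sets, and the application of \emph{strong} cone avoidance of $\ART^n_{c,e}$ to $\tilde f$ (which, as you say, is where strongness is indispensable). The gap is in your final extraction of $H$ from $\tilde H$. You insist on ``goodness'', i.e.\ exact agreement $f(\sigma\langle y\rangle)=\tilde f(\sigma)$ for all later $y\in H$, and this drives you into a Mathias forcing in which the splitting requirements clash with the goodness constraint. Your proposed resolution --- truncate a splitting pair $\eta_i$ at the use and claim the offending constraints become vacuous --- does not work: a violation of goodness is \emph{internal} to $\rho\eta_i$ (an $n$-set $\sigma$ drawn from $\rho$ and early points of $\eta_i$, together with a later point $y\in\eta_i$ with $f(\sigma\langle y\rangle)\neq\tilde f(\sigma)$), and cutting $\eta_i$ at the use of the computation neither detects nor removes such a pattern; once it enters the stem, no tail restriction of the reservoir can restore goodness of the generic. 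The alternative of searching for splittings only among \emph{good} extensions breaks the other half of the dichotomy: in the non-splitting case you must compute the common value of $\Phi^{B\oplus X}_{e'}$ by searching for a convergent good extension, and recognizing good extensions requires the thresholds $t(\sigma)$ (equivalently $\tilde f$), which are in general only $(f\oplus C)'$-computable and hence may compute $A$. This is exactly the tension the paper confronts only in the genuinely harder situation of Lemma \ref{lem:WRT.sca.G.ext}, where it is resolved by a Seetapun-style $\Pi^0_1$-class argument, not by truncation.

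For the present lemma the fix is to demand less. Let $\theta=\tilde f([\tilde H]^n)$, a finite set of at most $e$ colors, and require of $H$ only that $f(\sigma\langle y\rangle)\in\theta$, not that it equal $\tilde f(\sigma)$. Then no forcing is needed at all: build $H=\bigcup_s\sigma_s\subseteq\tilde H$ greedily, at stage $s$ taking the least $x\in\tilde H$ with $x>\max\sigma_s$ and $f(\rho\langle x\rangle)\in\theta$ for every $\rho\in[\sigma_s]^n$. The search is computable from $f\oplus\tilde H$ (membership in the finite set $\theta$ is decidable), and it terminates because for all sufficiently large $x\in\tilde H$ one has $f(\rho\langle x\rangle)=\tilde f(\rho)\in\theta$ for each of the finitely many $\rho$ (here one uses $\tilde f([\sigma_s]^n)\subseteq\theta$, which holds since $\sigma_s\subseteq\tilde H$). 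The resulting $H$ is infinite, satisfies $f([H]^{n+1})\subseteq\theta$ and hence $|f([H]^{n+1})|\leq e$, and $H\leq_T f\oplus\tilde H$, so cone avoidance is inherited from $\tilde H$ with no extra requirements to meet. This is the paper's route; your goodness requirement is strictly stronger than necessary and is what creates the obstacle you describe.
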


\begin{proof}
Fix $X, Y$ and $g: [\omega]^{n+1} \to c$ such that $X \not\leq_T Y \oplus g$.

For each $\sigma \in [\omega]^n$ and $k < c$, let $R_{\sigma,k} = \{x: g(\sigma\<x\>) = k\}$. By the cone avoidance property of $\COH$, pick $Z$ such that $X \not\leq_T Y \oplus g \oplus Z$ and $Z$ is cohesive for $(R_{\sigma,k}: \sigma \in [\omega]^n, k < c)$. For each $\sigma \in [\omega]^n$, let $\bar{g}(\sigma) = \lim_{x \in Z} g(\sigma\<x\>)$, which is defined by the cohesiveness of $Z$. By the strong cone avoidance of $\ART^n_{c,e}$, pick $W \in [Z]^\omega$ such that $X \not\leq_T Y \oplus g \oplus W$ and $|\bar{g}([W]^n)| \leq e$. Let $\theta = \bar{g}([W]^n)$.

We build a strictly increasing sequence $(\sigma_s \in [W]^{<\omega}: s < \omega)$ by induction. Let $\sigma_0 = \emptyset$. Suppose that $\sigma_s \in [W]^{<\omega}$ and $g([\sigma_s]^{n+1}) \subseteq \theta$. As $\bar{g}([\sigma_s]^n) \subseteq \theta$, $g(\rho \<x\>) = \bar{g}(\rho) = \lim_{s \in W} g(\rho \<s\>) \in \theta$ for all $\rho \in [\sigma_s]^n$ and sufficiently large $x \in W$. So, in a $g \oplus W$-computable way, we can pick
$$
    x_s = \min \{x \in W: x > \max \sigma_s \wedge \forall \rho \in [\sigma_s]^n(g(\rho\<x\>) \in \theta)\}.
$$
Let $\sigma_{s+1} = \sigma_s \<x\>$.

So, $V = \bigcup_s \sigma_s$ is $g \oplus W$-computable and infinite, and $g([V]^{n+1}) \subseteq \theta$. Moreover, $X \not\leq_T Y \oplus g \oplus V$, as $X \not\leq_T Y \oplus g \oplus W$.
\end{proof}

The remaining part of this section is devoted to proving (A3). We fix $A \not\leq_T B$ and $f: [\omega]^r \to c$ where $c < \omega$. In what follows, we say that a set $X$ is \emph{cone avoiding} if $A \not\leq_T B \oplus X$; and a Mathias condition $(\sigma,X)$ is \emph{cone avoiding} if $X$ is cone avoiding. Tentatively, we say that $(\eta_n: n < \kappa)$ ($\kappa \leq \omega$) is \emph{an increasing block sequence}, if $\eta_n \in [\omega]^{<\omega}$ and $\max \eta_n < \min \eta_{n+1}$ for $n+1 < \kappa$.

We need a cone avoiding infinite $H$ such that $|f([H]^r)| \leq d$. The strategy is to build $H$ as the union of an increasing block sequence $(\eta_n: n < \omega)$ such that
\begin{enumerate}
    \item[(H1)] if $\tau \in [\eta_n]^r$ for some $n$ then $f(\tau) \in \theta$ for some fixed $\theta \in [c]^{\leq d_{r-1}}$;
    \item[(H2)] if $\tau$ is an $r$-tuple with a non-empty proper initial segment $\rho$ contained in some $\eta_n$ and the remaining final segment contained in $\bigcup_{m > n} \eta_m$, then $f(\tau) \in \theta_{\rho}$ for some $\theta_{\rho} \in [c]^{\leq d_{r-k}}$, where $k = |\rho|$; moreover, for each $k \in (0, r)$ there are at most $d_k$ many distinct $\theta_{\rho}$'s, i.e., $|\{\theta_\rho: \rho \in [H]^k\}| \leq d_k$.
\end{enumerate}

By (H1), if $\tau$ is contained in some $\eta_n$ then there are at most $d_{r-1}$ many possibilities for $f(\tau)$; otherwise, by (H2), there are at most $\sum_{0 < k < r} d_k d_{r-k}$ many possibilities for $f(\tau)$. So, $|f([H]^r)| \leq d$.

For each $k < r$ and $\rho \in [\omega]^{k}$, let $f_\rho(\tau) = f(\rho\tau)$ for $\tau \in [\omega]^{r-k}$ with $\max \rho < \min \tau$. We implement the above strategy in several steps:
\begin{enumerate}
    \item By the induction hypothesis and Mathias forcing, we build a cone avoiding $D \in [\omega]^\omega$ and a sequence $(\Theta_k: 0 < k < r)$ such that
        \begin{enumerate}
            \item each $\Theta_k$ is a set of at most $d_k$ many sets of colors, and $|\theta| \leq d_{r-k}$ for each $\theta \in \Theta_k$;
            \item if $0 < k < r$ and $\rho \in [D]^k$, then there exist $\theta_\rho \in \Theta_k$ so that $f_\rho(\tau) \in \theta_\rho$ for all $\tau \in [D]^{r-k}$ with $\min \tau$ sufficiently large.
        \end{enumerate}
    \item By a Seetapun-style Mathias forcing, we build a cone avoiding $G \in [D]^\omega$ as the union of an increasing block sequence $(\xi_n \in [D]^{<\omega}: n < \omega)$ such that
    \begin{enumerate}
        \item $|f([\xi_n]^r)| \leq d_{r-1}$;
        \item $f_\rho(\tau) \in \theta_\rho \in \Theta_k$, for $k \in (0,r)$, $\rho \in [\bigcup_{i < n} \xi_i]^k$ and $\tau \in [\bigcup_{i \geq n} \xi_i]^{r - k}$.
    \end{enumerate}
    \item By the strong cone avoidance of the infinite pigeonhole principle, we select $\eta_n$'s from some appropriate $\xi_i$'s and thus build $H$ as a subset of $G$.
\end{enumerate}

Step (2) in the above plan is the key step, in which we establish (H2) in a slightly stronger form and also make some progress towards (H1). The reader may wonder why we mix the above two tasks together in (2). An attempt to separate these two tasks would lead to a plan like the following:
\begin{enumerate}
    \item[(1')] We first build an increasing block sequence $(\xi_n: n < \omega)$ such that $G = \bigcup_n \xi_n$ is cone avoiding and $(\xi_n: n < \omega)$ satisfies (H2) in place of $(\eta_n: n < \omega)$;
    \item[(2')] Then we build $(\eta_n: n < \omega)$ satisfying (H1) with $\eta_n \subset G$.
\end{enumerate}
The difficulty in this simpler plan is that with the technique in this paper we would only make $\eta_n \subset G$ in (2') but not $\eta_n \subseteq \xi_i$ for some $i$. So in (2') we might lose (H2).

\subsection{The construction of $D$}\label{ss:WRT.sca.D}

Firstly, we build a cone avoiding $C \in [\omega]^\omega$ and a sequence $(\theta_\rho: 0 < |\rho| < r)$ such that for each $k \in (0,r)$ and $\rho \in [\omega]^k$,
\begin{enumerate}
    \item[(C1)] $\theta_\rho$ is a subset of $c$ with at most $d_{r-k}$ many elements;
    \item[(C2)] $f_\rho(\tau) \in \theta_\rho$ for all $\tau \in [C]^{r-k}$ with $\min \tau$ sufficiently large.
\end{enumerate}
Note that (C2) implies that $C$ has some kind of cohesiveness. Thus, it is not surprising that the construction of $C$ resembles constructions of cohesive sets.

\begin{lemma}
Suppose that $0 < k < r$ and $\rho \in [\omega]^k$. Then every cone avoiding Mathias condition $(\sigma,X)$ can be extended to another cone avoiding $(\sigma,Y)$ such that $\max \rho < \min Y$ and $|f_\rho([Y]^{r-k})| \leq d_{r-k}$.
\end{lemma}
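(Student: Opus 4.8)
The plan is to feed the coloring $f_\rho$ into the induction hypothesis. Since $0<k<r$ we have $1\le r-k<r$, so $\ART^{r-k}_{<\infty,d_{r-k}}$ — and in particular $\ART^{r-k}_{c,d_{r-k}}$ — has strong cone avoidance property by the induction hypothesis. Two small points need attention: the coloring $f_\rho$ may be arbitrarily complicated relative to $B$, so that plain cone avoidance is not enough and the \emph{strong} form is genuinely required; and the thin set we produce must be a subset of $X$ lying above $\max\rho$, so that it really gives a Mathias extension of $(\sigma,X)$.

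Concretely, I would first set $X'=X\cap(\max\rho,\infty)$; since $X'\equiv_T X$, we still have $A\not\leq_T B\oplus X'$, and $X'$ is infinite. Let $p\colon\omega\to X'$ be the increasing enumeration of $X'$ and let $h\colon[\omega]^{r-k}\to c$ be the $c$-coloring $h(\{a_0<\dots<a_{r-k-1}\})=f_\rho(\{p(a_0),\dots,p(a_{r-k-1})\})$, which is well defined since $p(a_0)>\max\rho$. Applying strong cone avoidance of $\ART^{r-k}_{c,d_{r-k}}$ with $B\oplus X'$ in place of $B$ (legitimate since $A\not\leq_T B\oplus X'$), to the instance $h$, yields an infinite $H\subseteq\omega$ with $|h([H]^{r-k})|\le d_{r-k}$ and such that $H$ computes no real in $\{Z:A\leq_T(B\oplus X')\oplus Z\}$; in particular $H$ itself is not in that set, so $A\not\leq_T B\oplus X'\oplus H$. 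Now put $Y=\{p(a):a\in H\}$. Then $Y$ is an infinite subset of $X'\subseteq X$ with $\min Y>\max\rho$ and $\max\sigma<\min X\le\min Y$, so $(\sigma,Y)$ is a Mathias condition extending $(\sigma,X)$; since $Y\leq_T X'\oplus H$ we get $A\not\leq_T B\oplus Y$, so $(\sigma,Y)$ is cone avoiding; and for every $\tau=\{p(a_0),\dots,p(a_{r-k-1})\}\in[Y]^{r-k}$ we have $f_\rho(\tau)=h(\{a_0,\dots,a_{r-k-1}\})$, whence $f_\rho([Y]^{r-k})=h([H]^{r-k})$ has at most $d_{r-k}$ elements. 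This is the required $(\sigma,Y)$.

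I do not expect a genuine obstacle here: the entire content is the inductive hypothesis, and the rest is the relativize‑and‑reindex bookkeeping that the paper's emphasis on strong cone avoidance is designed to make routine. The only point to get right is that $f_\rho$ carries no complexity guarantee, which forces the use of the \emph{strong} form of cone avoidance and the passage to the side parameter $B\oplus X'$; reindexing by the principal function of $X'$ is then just the standard device for pushing the solution back inside $X$. (Equivalently, one may simply say: strong cone avoidance relativizes and localizes, producing directly an infinite $Y\subseteq X\cap(\max\rho,\infty)$ with $|f_\rho([Y]^{r-k})|\le d_{r-k}$ and $A\not\leq_T B\oplus X'\oplus Y$.)
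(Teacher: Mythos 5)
Your proposal is correct and matches the paper's argument: the paper's proof is a one-line application of the induction hypothesis, picking a cone avoiding $Y \in [X]^\omega$ with $\max\rho < \min Y$ and $|f_\rho([Y]^{r-k})| \leq d_{r-k}$ via strong cone avoidance of $\ART^{r-k}_{c,d_{r-k}}$. Your explicit relativization to $B \oplus X'$ and reindexing by the principal function of $X' = X \cap (\max\rho,\infty)$ is exactly the bookkeeping the paper leaves implicit, and your emphasis on needing the \emph{strong} form (since $f_\rho$ carries no complexity bound) is the right point.
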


\begin{proof}
As $\ART^{r-k}_{c,d_{r-k}}$ has the strong cone avoidance property, we can pick a cone avoiding $Y \in [X]^\omega$ such that $\max \rho < \min Y$ and $|f_\rho([Y]^{r-k})| \leq d_{r-k}$.
\end{proof}

With the above lemma and Lemma \ref{lem:M-forcing.ca}, we can obtain a descending sequence of cone avoiding Mathias conditions $((\sigma_n, X_n): n < \omega)$ and a sequence $(\theta_\rho: 0 < |\rho| < r)$, satisfying the following properties:
\begin{enumerate}
    \item If $k \in (0,r)$ and $\rho \in [\omega]^k$, then $\theta_\rho \in [c]^{\leq d_{r-k}}$ and there exists $n$ such that $f_\rho([X_n]^{r-k}) = \theta_\rho$;
    \item For each $n$, $|\sigma_n| < |\sigma_{n+1}|$ and $\Phi^B_n(Z) \neq A$ for all $Z \in (\sigma_n,X_n)$.
\end{enumerate}
So (C1) and (C2) hold for $C = \bigcup_n \sigma_n$ and $(\theta_\rho: 0 < |\rho| < r)$, and $C$ is cone avoiding.

Secondly, we build the desired $D \in [C]^\omega$. For each $k \in (0, r)$, define $F_k(\rho) = \theta_\rho$ for $\rho \in [C]^k$. As $\ART^k_{<\infty, d_k}$ has the strong cone avoidance property for each $k \in (0, r)$, we can obtain a sequence $(D_k: k < r)$ such that
\begin{enumerate}
    \item $D_0 = C$ and $D_{k+1} \in [D_k]^\omega$ is cone avoiding for each $k < r - 1$.
    \item $|F_k([D_{k}]^k)| \leq d_k$ if $0 < k < r$.
\end{enumerate}

Let $D = D_{r-1}$. For each $k \in (0, r)$, let $\Theta_k = F_k([D]^k)$. It follows that
\begin{itemize}
    \item[(D)] if $0 < k < r$ and $\rho \in [D]^k$, then $f(\rho\tau) \in \theta_\rho \in \Theta_k$ for all $\tau \in [D]^{r-k}$ with $\min \tau$ sufficiently large.
\end{itemize}

\subsection{The construction of $G$}\label{ss:WRT.sca.G}

In this subsection, we start with $D$ from \S \ref{ss:WRT.sca.D} and build an increasing block sequence $(\xi_n \in [D]^{<\omega}: n < \omega)$ such that
\begin{enumerate}
    \item[(G1)] $G = \bigcup_n \xi_n$ is infinite and cone avoiding;
    \item[(G2)] $|f([\xi_n]^r)| \leq d_{r-1}$;
    \item[(G3)] If $k \in (0,r)$ and $\rho \in [\bigcup_{i < n} \xi_i]^k$ then $f(\rho\tau) \in \theta_\rho$ for all $\tau \in [\bigcup_{i \geq n} \xi_i]^{r - k}$.
\end{enumerate}

Note that, if we ignore (G1) then we can easily get some $(\zeta_n \in [D]^{<\omega}: n < \omega)$ satisfying (G2) and (G3) in place of $(\xi_n: n < \omega)$. We start with $(\sigma_0, X_0) = (\emptyset, D)$, and extend $(\sigma_n, X_n)$ to $(\sigma_n, Y_{n+1})$ so that $f_\rho(\tau) = f(\rho\tau) \in \theta_\rho$ for $k \in (0,r)$, $\rho \in [\sigma_n]^k$ and $\tau \in [Y_{n+1}]^{r-k}$, then we extend $(\sigma_n, Y_{n+1})$ to $(\sigma_{n+1}, X_{n+1})$ with $\sigma_{n+1} = \sigma_n \zeta_n$ for some $\zeta_n$ of length $1$. By (D), we can even make $X_{n+1} = X_n \cap (b,\infty)$ for some $b$. However, in general we need $(f \oplus D)'$ to find such a lower bound $b$, thus we cannot ensure that $\bigcup_n \zeta_n$ is cone avoiding. So, the non-trivial job is to satisfy (G2, G3) and (G1) simultaneously.

To this end, we prove the following lemma, which helps us in extending a Mathias condition $(\sigma,X)$ to some $(\sigma\xi, Y)$ such that $|f([\xi]^r)| \leq d_{r-1}$ and $(\sigma\xi, Y)$ forces a cone avoiding requirement $\Phi_e^B(G) \neq A$. To prove this lemma, we follow an idea of Seetapun from \cite{Seetapun.Slaman:1995.Ramsey}. Although Seetapun's proof is well known, it appears tricky at first sight. So, we explain the key idea of the following proof below:
\begin{enumerate}
    \item To force a cone avoiding requirement, we search for splitting computations in a cone avoiding Mathias condition $(\sigma, X)$, as we did in the proof of Lemma \ref{lem:M-forcing.ca}. In addition, desirable splitting computations should have oracles obeying (G2). The existence of such splitting computations is a $\Sigma^{B \oplus X \oplus f}_1$ question $\varphi$. As $f$ is of arbitrary complexity, we cannot afford to ask $\varphi$ directly.
    \item To work around the above difficulty, we define a compact $\Pi^0_1$ class $\mathcal{C}$ of $c$-colorings which contains $f$. Instead of asking $\varphi$ which concerns the specific $f$, we ask whether for every coloring $g$ in $\mathcal{C}$ there exist splitting computations with oracles meeting a $g$-version of (G2). This question $\psi$ is $\Sigma^{B \oplus X}_1$, by the compactness of $\mathcal{C}$.
    \item If $\psi$ has a positive answer, then we can obtain an appropriate $\xi$ and simultaneously meet a cone avoiding requirement $\Phi^B_e(G) \neq A$.
    \item If $\psi$ has a negative answer, then we can apply the cone avoidance property of $\WKL$ and the induction hypothesis to get some cone avoiding $(\sigma, Y) \leq_M (\sigma, X)$, which forces $\Phi^B_e(G) \neq A$. Then $\xi = \emptyset$, which satisfies (G2) trivially.
\end{enumerate}

\begin{lemma}\label{lem:WRT.sca.G.ext}
For each $e$ and each cone avoiding Mathias condition $(\sigma,X)$, there exists a cone avoiding $(\sigma\xi,Y) \leq_M (\sigma,X)$ such that $|f([\xi]^{r})| \leq d_{r-1}$ and $\Phi^B_e(Z) \neq A$ for all $Z \in (\sigma\xi,Y)$.
\end{lemma}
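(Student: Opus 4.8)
The plan is to argue in the style of Seetapun's theorem, running a Mathias forcing inside $X$ as in the proof of Lemma \ref{lem:M-forcing.ca}; the one genuinely new point is that, since we are after \emph{strong} cone avoidance, the coloring $f$ may itself compute $A$, so $f$ cannot be used as an oracle, and the $\Pi^0_1$ class $\mathcal{C}$ together with Theorem \ref{thm:JoSo} and the plain cone avoidance of $\ART^r_{<\infty,d_{r-1}}$ from (A2) will be used to get around this. Call a finite $\xi\subseteq X$ with $\max\sigma<\min\xi$ \emph{small} if $|f([\xi]^r)|\le d_{r-1}$ (so $\emptyset$ is small), and split into two cases. \emph{Case 1:} some small $\xi\subseteq X$ above $\sigma$ and some $x$ satisfy $\Phi^B_e(\sigma\xi;x)\downarrow\neq A(x)$. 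Then I would set $Y=X\cap(\max\xi,\infty)$; this is cone avoiding, and by the usual use argument $\Phi^B_e(Z;x)=\Phi^B_e(\sigma\xi;x)\neq A(x)$ for all $Z\in(\sigma\xi,Y)$, so $(\sigma\xi,Y)$ works.

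In \emph{Case 2} (not Case 1) every small $\xi\subseteq X$ above $\sigma$ has $\Phi^B_e(\sigma\xi;x)\downarrow\Rightarrow\Phi^B_e(\sigma\xi;x)=A(x)$, and the plan is first to manufacture a cone avoiding substitute for $f$. Let $\mathcal{P}$ be the class of $g\in\mathcal{C}$ such that no $x$ and no finite $\xi_0,\xi_1\subseteq X$ above $\sigma$ with $|g([\xi_i]^r)|\le d_{r-1}$ for $i<2$ form an $(e,B)$-splitting over $\sigma$; in words, the ``$g$-small'' extensions of $\sigma$ inside $X$ never $(e,B)$-split over $\sigma$. For fixed $\xi_0,\xi_1,x$ and a fixed stage the relevant condition on $g$ is clopen and decidable uniformly in $B\oplus X$, so $\mathcal{P}$ is a $\Pi^0_1(B\oplus X)$ subclass of $\mathcal{C}$, and $f\in\mathcal{P}$ by the Case 2 hypothesis, so $\mathcal{P}\neq\emptyset$. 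Since $A\not\le_T B\oplus X$, Theorem \ref{thm:JoSo} relativized to $B\oplus X$ yields $\hat g\in\mathcal{P}$ with $A\not\le_T B\oplus X\oplus\hat g$.

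Because $\hat g\in\mathcal{P}$, the partial function $h$ with $h(x)=\Phi^B_e(\sigma\xi;x)$ for any $\hat g$-small $\xi\subseteq X$ above $\sigma$ with $\Phi^B_e(\sigma\xi;x)\downarrow$ is well defined and $h\le_T B\oplus X\oplus\hat g$; since $A\not\le_T B\oplus X\oplus\hat g$, $h$ cannot equal $A$, so there is an $x_0$ with either no $\hat g$-small $\xi\subseteq X$ above $\sigma$ converging on $x_0$, or every such $\xi$ giving $\Phi^B_e(\sigma\xi;x_0)\downarrow\neq A(x_0)$. Next, as $\hat g$ is a $c$-coloring of $[\omega]^r$ that is cone avoiding over $B\oplus X$, I would apply the cone avoidance of $\ART^r_{<\infty,d_{r-1}}$ (Lemma \ref{lem:WRT.ca}) to $\hat g$ restricted to $X\cap(\max\sigma,\infty)$ to obtain an infinite cone avoiding $Y\subseteq X$ above $\sigma$ with $|\hat g([Y]^r)|\le d_{r-1}$, and take $\xi=\emptyset$ (so $|f([\xi]^r)|=0\le d_{r-1}$). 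For $Z\in(\sigma,Y)$: if $\Phi^B_e(Z;x_0)\uparrow$ then $\Phi^B_e(Z)$ is not total; and if $\Phi^B_e(Z;x_0)\downarrow$ then, padding by elements of $Y$ above the use, its computation is witnessed by a $\hat g$-small $\eta\subseteq Y\subseteq X$ above $\sigma$, so by the choice of $x_0$ the value is $\neq A(x_0)$. Either way $\Phi^B_e(Z)\neq A$, so $(\sigma,Y)=(\sigma\xi,Y)$ is as required.

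The main obstacle is exactly the possibly large complexity of $f$: one cannot search $B\oplus X$-computably through the $f$-small extensions of $\sigma$, nor use $f$ to shrink $X$, which is why the argument must route everything through the cone avoiding proxy $\hat g$ produced from the $\Pi^0_1$ class $\mathcal{C}$ by Theorem \ref{thm:JoSo}. The design of Case 2 should make the rest routine: taking $\xi=\emptyset$ there removes any clash between being ``$\hat g$-small'' and being ``$f$-small'' — the $\hat g$-smallness is needed only for the new reservoir $Y$, which is not required to be $f$-small — and the thinning of $X$ via the already-established (weaker) cone avoidance of $\ART^r_{<\infty,d_{r-1}}$ is precisely the zig-zag step of the induction. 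Beyond this, the only care needed is the bookkeeping with the ``converges in $|\sigma|$ steps'' convention when lifting finite computations to computations from an infinite $Z$.
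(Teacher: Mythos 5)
Your proof is correct and follows essentially the same route as the paper: a case split on whether small (i.e., $|f([\xi]^r)|\le d_{r-1}$) extensions inside $X$ can force a value disagreeing with $A$, and otherwise passing to a cone avoiding proxy coloring $\hat g$ in a $\Pi^0_1(B\oplus X)$ class via Theorem \ref{thm:JoSo}, then thinning $X$ to a cone avoiding $Y$ with $|\hat g([Y]^r)|\le d_{r-1}$ by Lemma \ref{lem:WRT.ca} so that conditions below $(\sigma,Y)$ cannot compute $A$. The only differences (your class $\mathcal{P}$ quantifies over pairs of small sets rather than splitting pairs inside one small set, and your Case 2 argues via the partial function $h$ and a witness $x_0$ instead of noting $\Phi^B_e(Z)\le_T B\oplus Y$ when total) are cosmetic.
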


\begin{proof}
Let $\mathcal{C}$ be the set of all $c$-colorings of $[\omega]^r$. Then $f \in \mathcal{C}$ and $\mathcal{C}$ is a compact $\Pi^0_1$ class. Let $\mathcal{U}$ be the set of $g \in \mathcal{C}$ such that if $\tau \in [X]^{<\omega}$ and $|g([\tau]^{r})| \leq d_{r-1}$ then $\tau$ contains \emph{no} pair $(e,B)$-splitting over $\sigma$. So, $\mathcal{U}$ is $\Pi^0_1$ in $B \oplus X$. (Now, $\psi$ is the question whether $\mathcal{U}$ is empty.)

\medskip

\emph{Case 1:} $\mathcal{U} = \emptyset$. In particular, $f \not\in \mathcal{U}$.

By the definition of $\mathcal{U}$, we can pick $\xi_0$ and $\xi_1$ from $[X]^{<\omega}$ and $x$ so that $|f([\xi_i]^r)| \leq d_{r-1}$ for $i < 2$ and $\Phi^B_e(\sigma\xi_0; x) \downarrow \neq \Phi^B_e(\sigma\xi_1; x) \downarrow$. Fix $i < 2$ such that $\Phi^B_e(\sigma\xi_i; x) \neq A(x)$ and let $\xi = \xi_i$. So, $(\sigma\xi, X \cap (\max \xi_i, \infty))$ is a cone avoiding extension as desired.

\medskip

\emph{Case 2:} $\mathcal{U} \neq \emptyset$.

As $X$ is cone avoiding, by the cone avoidance property of $\WKL$ (Theorem \ref{thm:JoSo}) there exists $g \in \mathcal{U}$ with $X \oplus g$ cone avoiding. By Lemma \ref{lem:WRT.ca} and the induction hypothesis that $\ART^{r-1}_{c,d_{r-1}}$ has the strong cone avoidance property, pick $Y \in [X]^\omega$ such that $Y$ is cone avoiding and $|g([Y]^r)| \leq d_{r-1}$. As $g \in \mathcal{U}$, $Y$ contains no pair $(e,B)$-splitting over $\sigma$. So, if $Z \in (\sigma, Y)$ and $\Phi^B_e(Z)$ is total then $\Phi^B_e(Z) \leq_T B \oplus Y$ and thus $\Phi^B_e(Z) \neq A$. Thus, $(\sigma,Y)$ is the desired extension.
\end{proof}

By the construction of $C$, every Mathias condition $(\sigma,X)$ with $X \subseteq C$ can be extended to some $(\tau,Y) = (\sigma, X \cap (b,\infty))$ such that $f_\rho(\upsilon) \in \theta_\rho$ for all non-empty $\rho \in [\tau]^{< r}$ and $\upsilon \in [Y]^{r - |\rho|}$.

By the above remark and Lemma \ref{lem:WRT.sca.G.ext}, we can build a descending sequence of cone avoiding Mathias conditions $((\sigma_n, X_n): n < \omega)$ such that
\begin{enumerate}
    \item $(\sigma_0, X_0) = (\emptyset, D)$;
    \item $f_\rho(\tau) \in \theta_\rho$ for all non-empty $\rho \in [\sigma_n]^{< r}$ and $\tau \in [X_n]^{r - |\rho|}$;
    \item $\sigma_{n+1} = \sigma_n \xi_n$ for some non-empty $\xi_n$ with $|f([\xi_n]^r)| \leq d_{r-1}$;
    \item $\Phi^B_n(Z) \neq A$ for all $Z \in (\sigma_{n+1},X_{n+1})$.
\end{enumerate}
Let $G = \bigcup_n \xi_n = \bigcup_n \sigma_n$. Then (G1-3) are satisfied.

\subsection{The construction of $H$}\label{ss:WRT.sca.H}

For each $n$, let $\alpha_n = f([\xi_n]^r)$. Then $\alpha_n$ is a subset of $c$ with at most $d_{r-1}$ many elements. For each $\alpha \in [c]^{\leq d_{r-1}}$, let
$$
    G_\alpha = \{x \in G: \exists n (x \in \xi_n \wedge \alpha_n = \alpha)\}.
$$
By the strong cone avoidance property of the infinite pigeonhole principle (Theorem \ref{thm:DzJo}), there exist $\alpha \in [c]^{\leq d_{r-1}}$ and a cone avoiding $H \in [G_\alpha]^\omega$.

\begin{lemma}
$H$ is the union of some increasing block sequence $(\eta_n: n < \omega)$ satisfying (H1, H2). Hence $|f([H]^r)| \leq d$.
\end{lemma}

\begin{proof}
Let $(\eta_n: n < \omega)$ be the sequence of non-empty $H \cap \xi_m$'s. Then $H = \bigcup_n \eta_n$.

As $\eta_n \subseteq \xi_m$ for some $m$ and $\xi_m$ satisfies (G2), (H1) holds. Suppose that $\sigma \in [H]^r$ is of the form $\rho\tau$ for some $\rho \subseteq \eta_n$ and $\tau \subset \bigcup_{m > n} \eta_m$. By (G3), $f(\sigma) = f_\rho(\tau) \in \theta_\rho$; by (D), $\theta_\rho \in \Theta_k$ where $k = |\rho|$. By the construction of $\theta_\rho$ and $\Theta_k$, $|\theta_\rho| \leq d_{r-k}$ and $|\Theta_k| \leq d_k$. So, (H2) holds as well.

Hence, $|f([H]^r)| \leq d$ by the remark following (H2).
\end{proof}

This completes the proof of Theorem \ref{thm:WRT.sca}.

\section{The Free Set Theorem}\label{s:FS.sca}

In this section, we establish the strong cone avoidance property for the Free Set Theorem of arbitrary finite exponent.

\begin{theorem}\label{thm:FS.sca}
$\FS$ has the strong cone avoidance property. Hence, $\FS \not\vdash \ACA$.
\end{theorem}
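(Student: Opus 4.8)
The plan is to prove Theorem~\ref{thm:FS.sca} by induction on the exponent $r$, following the same zig-zag pattern already used for Achromatic Ramsey Theorems. The base case $r=1$ is essentially the remark after Theorem~\ref{thm:CGHJ}: given $f:\omega\to\omega$ and $X\not\leq_T Y$, either we reduce to the bounded case $f(x)\leq x$ (handled by Theorems~\ref{thm:CGHJ} and~\ref{thm:DzJo}), or we first pass to a cohesive set using strong cone avoidance of $\COH$ so that the finitely many ``large'' values stabilize, then apply the infinite pigeonhole principle (Theorem~\ref{thm:DzJo}) to the coloring recording which of finitely many columns $f(x)$ lands in; on the resulting set $f$ is either constant-off-diagonal or small, and in both situations an infinite free subset is extracted while preserving $X\not\leq_T Y\oplus(\text{that set})$. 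For the inductive step I would assume strong cone avoidance for $\FS^{k}$ for all $k<r$, first derive (ordinary) cone avoidance for $\FS^{r}$, and then bootstrap that back up to strong cone avoidance for $\FS^{r}$.

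For the middle step (cone avoidance for $\FS^{r}$ from strong cone avoidance for $\FS^{<r}$), the key input is the reduction Theorem~\ref{thm:CGHJ}: from $f:[\omega]^r\to\omega$ we get $g:[\omega]^r\to 2r+2$ with $g\leq_T f$ such that any infinite $g$-homogeneous $H$ yields, together with $g$, an infinite $f$-free set. So it suffices to find an infinite $g$-homogeneous set $H$ with $A\not\leq_T B\oplus f\oplus H$ whenever $A\not\leq_T B$. This is exactly the kind of statement handled by the Seetapun-style argument of \S\ref{s:WRT.sca}: introduce the ``column colorings'' $R_{\sigma,k}=\{x: g(\sigma\langle x\rangle)=k\}$ for $\sigma\in[\omega]^r$ restricted appropriately, thin to a cohesive set $Z$ via cone avoidance of $\COH$ so that $\bar g(\sigma)=\lim_{x\in Z}g(\sigma\uh(r-1)\langle x\rangle)$ is defined, invoke the induction hypothesis for exponent $r-1$ on $\bar g$ to get a homogeneous-in-the-limit set, and then build the actual $g$-homogeneous set inside it by the finite-extension argument of Lemma~\ref{lem:WRT.ca}. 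The non-computability requirements $\Phi^B_e(Z)\neq A$ are forced exactly as in Lemma~\ref{lem:M-forcing.ca} and Lemma~\ref{lem:WRT.sca.G.ext}, using the $\Pi^0_1$ class of candidate colorings and cone avoidance of $\WKL$ (Theorem~\ref{thm:JoSo}) to find a low-complexity witness when no splitting pair is available.

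For the last step (upgrading cone avoidance of $\FS^{r}$ to \emph{strong} cone avoidance), the idea is that when $X$ itself is allowed to be arbitrary we first kill the potential damage by the usual cohesiveness-plus-pigeonhole preprocessing: given $f:[\omega]^r\to\omega$ and arbitrary $X$ with $X\not\leq_T Y$, use strong cone avoidance of $\COH$ and of the pigeonhole principle, together with strong cone avoidance of $\ART^{r}_{<\infty,d}$ from Theorem~\ref{thm:WRT.sca} and of $\FS^{<r}$, to replace $f$ by a coloring of bounded range on an infinite cone-avoiding set, at which point the $f(\sigma)\leq\max\sigma$ clause of Theorem~\ref{thm:CGHJ} or a direct finite-injury thinning produces the free set. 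This is precisely where strong cone avoidance of the Achromatic Ramsey Theorems gets used in an essential way, as advertised in the introduction: it lets us collapse $f$'s range to a finite set while staying off the cone, and only finitely many colors then need to be ``avoided'' to achieve freeness, which the pigeonhole principle (Theorem~\ref{thm:DzJo}) handles with strong cone avoidance.

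Finally, the reduction $\FS^{r}\vdash\RRT^{r}_2$ and hence strong cone avoidance of $\RRT^{r}_2$ would be recorded as a corollary: given a $2$-bounded $f:[\omega]^r\to\omega$, one checks that any infinite $f$-free set is a rainbow (if $f$ took the same value twice on $[H]^r$, $2$-boundedness pins down the two tuples, and freeness on a sufficiently homogeneous refinement is violated), so strong cone avoidance transfers. The main obstacle I expect is the bootstrapping step: making the descending sequence of Mathias conditions simultaneously (i) force enough of the $g$-homogeneity/column structure at each stage, (ii) force $\Phi^B_n(Z)\neq A$, and (iii) keep all the auxiliary sets ($Z$, the $\ART$-thinned set, the $\FS^{<r}$-free sets) cone avoiding — the bookkeeping is delicate precisely because $g\leq_T f$ means we must everywhere carry $f$ (not just $g$) in the oracle, and because the $\Pi^0_1$-class argument that supplies a low witness must be threaded through the inductive call to $\FS^{r-1}$ rather than to a pigeonhole principle as in \S\ref{s:WRT.sca}.
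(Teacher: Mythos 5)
Your overall induction scheme (strong cone avoidance for $\FS^{<r}$, then cone avoidance for $\FS^{r}$, then strong cone avoidance for $\FS^{r}$) matches the paper, and your final corollary is essentially Theorem \ref{thm:RRT}. But two of your three steps rest on moves that do not work. For the middle step you propose to invoke Theorem \ref{thm:CGHJ} and then find an infinite $g$-homogeneous set avoiding the cone, for a $(2r+2)$-coloring $g$ of $[\omega]^r$. That is the full strength of $\RT^r_{2r+2}$: since $\RT^3_2$ already implies $\ACA$ (and by the lower bound in Theorem \ref{thm:Jo.bounds}), for $r\geq 3$ there are computable finite colorings of $[\omega]^r$ whose infinite homogeneous sets all compute the jump, so no Seetapun-style argument can deliver cone-avoiding homogeneous sets in this generality; moreover your inductive input, strong cone avoidance of $\FS^{r-1}$, produces \emph{free} sets, not homogeneous ones, so the claimed ``homogeneous-in-the-limit'' set has no source. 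The paper's Lemma \ref{lem:FS.sca.ca} avoids homogeneity entirely: it applies $\COH$ to the columns, applies strong cone avoidance of $\FS^{r-1}$ directly to the limit function $\bar g$ (which takes values in $\omega$), and then extracts a $g$-free set by a $g\oplus D$-computable greedy construction.

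For the upgrade to strong cone avoidance your plan is to ``collapse $f$'s range to a finite set'' using $\ART$, after which freeness would be easy. But $\ART$ applies only to colorings with finitely many colors; it cannot bound the range of an arbitrary $f:[\omega]^r\to\omega$, and no preprocessing by $\COH$ or pigeonhole does so either. The entire content of the paper's argument is how to get around exactly this obstacle: the trap decomposition (Lemma \ref{lem:FS.sca.trapped}) splits $f$ into $k$-trapped pieces whose values are confined to intervals determined by the argument; the $r$-trapped case --- which is already needed for your base case $\FS^1$, where $f(x)>x$ and you cannot use $f$ as an oracle --- is handled by a positive-measure $\Pi^0_1$-class and Martin-L\"of randomness argument (Lemma \ref{lem:FS.sca.r-trapped}), not by cohesiveness plus pigeonhole, which give nothing when the range of $f$ is infinite; and the $k$-trapped case with $k<r$ requires the admissible-condition Mathias forcing with fast-growing trees and the measure estimates of Lemmata \ref{lem:FS.sca.fgt.cnt}--\ref{lem:FS.sca.finite-ext}, where $\ART$ is legitimately applied only because the restricted colorings $f_\rho$ (for $|\rho|>k$) and the leaf-set colorings $h(\xi)$ take values in finite sets determined by the trap $I^\rho_k$ and by $[T]$. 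None of these ideas appear in your sketch, so both the base case (its unbounded part) and the bootstrapping step are genuinely missing.
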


Before proving Theorem \ref{thm:FS.sca}, we apply it to obtain similar results for the Rainbow Ramsey Theorem.

\begin{theorem}\label{thm:RRT}
For each $n > 0$ and a $2$-bounded function $f$ on $[\omega]^n$, there exists a uniformly $f$-computable $g: [\omega]^n \to \omega$ such that every $g$-free set is an $f$-rainbow.

Hence, $\RRT$ has the strong cone avoidance property, $\RCA \vdash \forall n > 0(\FS^n \to \RRT^n_k)$ for every $k < \omega$ and $\RRT \not\vdash \ACA$.
\end{theorem}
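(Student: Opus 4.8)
The plan is to prove the displayed reduction directly, in a form uniform in the bound, and then bootstrap it to arbitrary bounds $k$ by iterating $\FS^n$. The core construction is a single bounded search. Given a $k$-bounded $f:[\omega]^n\to\omega$ with $k\geq 2$, I would define $g\leq_T f$ as follows: for $\sigma\in[\omega]^n$ with $c=f(\sigma)$, let $P_\sigma$ be the set (of at most $k-1$ elements, by $k$-boundedness) of those $\tau\neq\sigma$ with $f(\tau)=c$ and $\max\tau\leq\max\sigma$; if $P_\sigma=\emptyset$ put $g(\sigma)=\max\sigma$, and otherwise let $\tau^{*}$ be the lexicographically least member of $P_\sigma$ and put $g(\sigma)=\min(\tau^{*}\setminus\sigma)$, which is well defined since $\tau^{*}\neq\sigma$ and $|\tau^{*}|=|\sigma|=n$ force $\tau^{*}\setminus\sigma\neq\emptyset$. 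Note $g(\sigma)\leq\max\sigma$ always, and $g$ is obtained from $f$ by a fixed oracle computation, hence is uniformly $f$-computable. The verification is short: if $H$ is an infinite $g$-free set but $f\uh[H]^n$ is not $(k-1)$-bounded, then since $f$ itself is $k$-bounded some color $c$ has $f^{-1}(c)=\{\sigma_{1},\dots,\sigma_{k}\}\subseteq[H]^n$; letting $\sigma^{\dagger}$ be the member of largest $\max$ (ties broken lexicographically), one has $P_{\sigma^{\dagger}}=f^{-1}(c)\setminus\{\sigma^{\dagger}\}\subseteq[H]^n$, so $g(\sigma^{\dagger})$ lies in some $\tau^{*}\subseteq H$ with $g(\sigma^{\dagger})\notin\sigma^{\dagger}$, i.e.\ $g(\sigma^{\dagger})\in H\setminus\sigma^{\dagger}$, contradicting $g$-freeness. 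For $k=2$, ``$(k-1)$-bounded'' means injective on $[H]^n$, that is, $H$ is an $f$-rainbow, which is exactly the assertion of the theorem; this step is essentially all there is, and the rest is packaging.

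For $\RCA\vdash\forall n>0(\FS^n\to\RRT^n_k)$, I would iterate the core $k-1$ times. Given a $k$-bounded $f$ on $[\omega]^n$, set $H_{0}=\omega$, and having an infinite $H_{i}$ with $f\uh[H_{i}]^n$ being $(k-i)$-bounded, transport this restriction to a coloring of $[\omega]^n$ along the principal function of $H_{i}$, form the associated $g$ as above (possible since $k-i\geq 2$ while $i<k-1$), apply $\FS^n$ to obtain an infinite $g$-free set, and pull it back to an infinite $H_{i+1}\subseteq H_{i}$ on which $f$ is $(k-i-1)$-bounded. After the fixed finite number $k-1$ of steps, $f\uh[H_{k-1}]^n$ is $1$-bounded, i.e.\ $H_{k-1}$ is an infinite $f$-rainbow; this all goes through in $\RCA$ since each $g$ is $\Delta^0_1$ in the relevant data and only a bounded iteration is used. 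Running the same bookkeeping with the strong cone avoidance of $\FS^n$ from Theorem~\ref{thm:FS.sca} in place of bare $\FS^n$ yields the strong cone avoidance of each $\RRT^n_k$: starting from $A\not\leq_T B$, at stage $i$ I would apply strong cone avoidance of $\FS^n$ to the pair $(A,B\oplus H_{i})$ — legitimate because the invariant $A\not\leq_T B\oplus H_{i}$ is maintained — obtaining $W_{i}$ with $A\not\leq_T B\oplus H_{i}\oplus W_{i}$, and since $H_{i+1}\leq_T H_{i}\oplus W_{i}$ the invariant passes to stage $i+1$; the final $H_{k-1}$ is then an $f$-rainbow with $A\not\leq_T B\oplus H_{k-1}$. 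Hence $\RRT$ has strong cone avoidance property, and, as observed right after Definition~\ref{def:Omitting}, this gives a countable $\omega$-model of $\RCA+\RRT$ omitting the relevant type, in particular one not computing $\emptyset'$, so $\RRT\not\vdash\ACA$.

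The main obstacle I anticipate is not the construction of $g$ — that is elementary once one sees that the right move is to let the $\max$-largest tuple of each color class point into a lower one — but the cone-avoidance bookkeeping in the iteration: one must thread the oracle $B\oplus H_{i}$ through each application of strong cone avoidance of $\FS^n$, because the eventual rainbow is a priori only computable from $H_{i}\oplus W_{i}$, and a careless argument would control $A$ over $B$ only at the last stage rather than throughout. Since $k$ is standard and the relativization of Theorem~\ref{thm:FS.sca} is uniform, this is routine. I would also check the deliberately harmless cases (colors used at most once, the clause $P_\sigma=\emptyset$, equal-$\max$ collisions) and confirm that transporting a coloring of $[\omega]^n$ along the principal function of an infinite set preserves $b$-boundedness and the free-set/rainbow property, all of which are immediate.
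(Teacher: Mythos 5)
Your proposal is correct, and its core device---making the $\max$-largest tuple of a repeated color class point into an earlier tuple of the same color, so that $g$-freeness forbids the whole class from sitting inside the free set---is essentially the paper's trick, which orders tuples by a fixed bijection code $\ulcorner\cdot\urcorner$ instead of by maximum; for $2$-bounded $f$ the two definitions of $g$ are interchangeable, both are uniformly $f$-computable via a bounded search, and both give that every $g$-free set (infinite or not) is an $f$-rainbow, which is the literal first half of the theorem. Where you genuinely diverge is the treatment of bounds $k>2$: the paper verifies only the $2$-bounded reduction and then asserts the consequences, leaving the passage from $\RRT^n_2$ to $\RRT^n_k$ implicit (it is covered by the known $\RCA$-equivalence of these principles due to Csima and Mileti), whereas you prove directly that for $k$-bounded $f$ one application of $\FS^n$ to your $g$ drops the bound to $k-1$ on the free set, and then iterate $k-1$ times, transporting colorings along principal functions and threading the invariant $A\not\leq_T B\oplus H_i$ through each use of strong cone avoidance of $\FS^n$ (Theorem~\ref{thm:FS.sca}); since $H_{i+1}\leq_T H_i\oplus W_i$ and $k$ is standard, this bookkeeping is sound, and the final appeal to an $\omega$-model omitting the cone above $\emptyset'$ matches the remark after Definition~\ref{def:Omitting}. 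Your route buys self-containedness---it supplies explicitly the detail the paper's write-up elides, namely strong cone avoidance and the $\FS^n$-implication for $b$-bounded $\RRT$ with $b>2$, via the pleasant quantitative refinement that one free-set application lowers the bound by one---at the cost of the paper's one-shot uniformity: for $k>2$ you do not exhibit a single $f$-computable $g$ all of whose free sets are rainbows, only a finite iteration, so the stated first half is carried entirely by your $k=2$ case, which is fine.
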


\begin{proof}
By Theorem \ref{thm:FS.sca}, it suffices to prove the first half.

Fix a computable bijection $\ulcorner \cdot \urcorner: [\omega]^n \to \omega$. Let $f: [\omega]^n \to \omega$ be $2$-bounded. For each $\sigma \in [\omega]^n$, let
$$
    g(\sigma) =
    \left\{
      \begin{array}{ll}
        \min (\tau - \sigma), & \exists \tau (\ulcorner \tau \urcorner < \ulcorner \sigma \urcorner \wedge f(\sigma) = f(\tau)); \\
        0, & \hbox{otherwise.}
      \end{array}
    \right.
$$
As $f$ is $2$-bounded, if $\tau$ in the definition of $g(\sigma)$ exists then it is unique. As $\tau$ and $\sigma$ are two distinct finite sets of same size, $\tau - \sigma \neq \emptyset$. Thus $g$ is well defined and total. By $\FS^n$, let $X \in [\omega]^\omega$ be $g$-free.

We claim that $X$ is a rainbow for $f$. Assume that $f(\sigma) = f(\tau)$ for distinct $\sigma,\tau \in [X]^n$. Without loss of generality, assume that $\ulcorner \tau \urcorner < \ulcorner \sigma \urcorner$. Then, $g(\sigma) \in \tau - \sigma \subset X - \sigma$, and we have a desired contradiction.
\end{proof}

Below, we prove Theorem \ref{thm:FS.sca}. Clearly, the second part is a consequence of the first part. To prove the first part, the overall plan is to establish the strong cone avoidance property for $\FS^r$ by induction on the exponent $r$:
\begin{itemize}
  \item[(F1)] Firstly we prove that $\FS^1$ has the strong cone avoidance property;
  \item[(F2)] Then we establish the cone avoidance property for $\FS^r$ with $r > 1$, with the induction hypothesis that $\FS^{r-1}$ has the strong cone avoidance property;
  \item[(F3)] Finally we prove that $\FS^r$ has the \emph{strong} cone avoidance property for $r > 1$, with the full induction hypothesis for all lesser exponents.
\end{itemize}

A key idea to accomplish (F1) and (F3) is to reduce $\FS^r$ to $\FS^r$ for functions behaving tamely. We establish this reduction in Lemma \ref{lem:FS.sca.trapped} below.

For $r > 0$, each $\sigma \in [\omega]^r$ induces a finite sequence of \emph{traps} (i.e., intervals) $(I^\sigma_k: k \leq r)$, where
\begin{gather*}
    I^\sigma_0 = [0, \sigma(0)], \\
    I^\sigma_k = [\sigma(k-1), \sigma(k)] \text{ if } 0 < k < r, \\
    I^\sigma_r = (\sigma(r-1),\infty).
\end{gather*}
For $k \leq r$ and a function $f: [\omega]^r \to \omega$, we say that $f$ is \emph{$k$-trapped} if $f(\sigma) \in I^\sigma_k$ for all $\sigma \in [\omega]^r$; $f$ is \emph{trapped} if it is $k$-trapped for some $k$; and $f$ is \emph{properly trapped} if it is $k$-trapped for some $k < r$. $\FS^r$ can be restricted to a certain class of functions, so we may say \emph{$\FS^r$ for $k$-trapped functions}, etc.

\begin{lemma}\label{lem:FS.sca.trapped}
If $\FS^r$ for trapped functions has the (strong) cone avoidance property, then $\FS^r$ has the (strong) cone avoidance property.
\end{lemma}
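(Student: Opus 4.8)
The plan is to reduce an arbitrary $f : [\omega]^r \to \omega$ to a finite family of trapped functions, apply the hypothesis to obtain a free set for one of them, and then observe that a free set for the relevant trapped function is already free for $f$ (after passing to a subset where the color falls in a fixed trap). Given $f : [\omega]^r \to \omega$, define for each $k \le r$ the function $f_k : [\omega]^r \to \omega$ by $f_k(\sigma) = f(\sigma)$ whenever $f(\sigma) \in I^\sigma_k$, and $f_k(\sigma) = \sigma(k)$ (or $\sigma(0)$ when $k = r$, or any fixed default value inside $I^\sigma_k$) otherwise; each $f_k$ is $k$-trapped and $f_k \le_T f$ uniformly. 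The key point is that for every $\sigma \in [\omega]^r$ there is exactly one (well, at least one) $k$ with $f(\sigma) \in I^\sigma_k$, and on the set of such $\sigma$ the functions $f$ and $f_k$ agree.

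First I would handle the coloring bookkeeping. Define $h : [\omega]^r \to r+1$ by letting $h(\sigma)$ be the least $k \le r$ with $f(\sigma) \in I^\sigma_k$. This $h$ is an $(r{+}1)$-coloring of $[\omega]^r$ computable from $f$. Working inside the proof of strong (resp.\ non-strong) cone avoidance, one is given $X$ (with $A \not\le_T B \oplus X$ in the non-strong case, or arbitrary $X$ in the strong case) and wants a free set for $f$ that, together with $X \oplus f$, avoids the cone above $A$. Since $\FS^r$ for trapped functions has the corresponding cone avoidance property, and since that property in particular yields an infinite homogeneous-type reduction, I would instead first thin out using the fact that $\RT^r_{r+1}$-style pigeonhole is available: more precisely, rather than invoking $\RT^r_{r+1}$ directly (which is too strong), observe that it suffices to run the whole argument once for each of the finitely many $k \le r$. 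That is, for the fixed $k$ we will eventually use, apply the hypothesis to the $k$-trapped function $f_k$ to get an infinite $f_k$-free set $G$ with the appropriate avoidance; then split $G$ according to $h\uh [G]^r$. The subtlety is that we cannot afford to apply $\RT^r$ to $h$, so instead I would argue as follows: it is enough to produce, for \emph{some} $k$, an infinite set $G$ on which $f$ is $k$-trapped \emph{and} which is $f_k$-free — because on such a $G$ we have $f = f_k$ on $[G]^r$, so $G$ is $f$-free.

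To get such a $G$, here is the cleaner route I would actually take. Apply the hypothesis to $f_0, f_1, \dots, f_r$ successively (nesting: get an infinite cone-avoiding $G_0 \subseteq X$ free for $f_0$, then $G_1 \subseteq G_0$ free for $f_1$, and so on), obtaining a single infinite cone-avoiding $G = G_r$ that is simultaneously free for every $f_k$. Now take any $\sigma \in [G]^r$ and let $k = h(\sigma)$, so $f(\sigma) = f_k(\sigma) \in I^\sigma_k$. Since $G$ is $f_k$-free, $f_k(\sigma) \notin G - \sigma$, hence $f(\sigma) \notin G - \sigma$. Thus $G$ is $f$-free, and $X \oplus f \oplus G$ avoids the relevant cone because each stage did. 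The strong version is identical, starting the nesting from an arbitrary $X$ and only demanding that the final $G$ avoid the cone above $A$ (over $B$), using strong cone avoidance of $\FS^r$ for trapped functions at each of the $r+1$ stages.

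The main obstacle to watch is the nesting of $r+1$ applications: each application must be of the \emph{strong} (resp.\ plain) cone avoidance property relative to the growing oracle $X \oplus f \oplus G_0 \oplus \cdots \oplus G_{j-1}$, which is fine in the strong case but in the non-strong case requires that this oracle still not compute a member of the cone — which holds by induction on $j$ since we chose each $G_j$ to preserve that. So the only real content is checking that "free for all $f_k$ simultaneously" implies "free for $f$", which is the trap dichotomy above; everything else is routine iteration of the hypothesis. I expect no serious difficulty, and the lemma should go through in a few lines once the functions $f_k$ and the observation $f = f_{h(\sigma)}(\sigma)$ are in place.
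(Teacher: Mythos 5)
Your proposal is correct and essentially the paper's own argument: the paper likewise replaces $f$ by the $r+1$ trapped functions $f_k$ (defined via min/max clamping, which is just a concrete choice of your ``default value inside $I^\sigma_k$''), iterates the hypothesis to obtain a nested cone-avoiding set free for all $f_k$ simultaneously, and concludes freeness for $f$ because every $f(\sigma)$ lies in some trap $I^\sigma_k$ on which $f = f_k$. The only slip is your parenthetical default $\sigma(0)$ for $k=r$, which is not in $I^\sigma_r = (\sigma(r-1),\infty)$; take for instance $\sigma(r-1)+1$, as your hedge ``any fixed default value inside $I^\sigma_k$'' already permits.
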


\begin{proof}
We prove the lemma for the strong cone avoidance property. The proof for the cone avoidance property is similar and thus omitted.

Fix $A, B$ and $f: [\omega]^r \to \omega$ such that $A \not\leq_T B$. For each $\sigma \in [\omega]^r$, let
\begin{gather*}
    f_0(\sigma) = \min \{\sigma(0), f(\sigma)\}; \\
    f_k(\sigma) = \min\{\sigma(k), \max\{\sigma(k-1), f(\sigma)\}\} \text{ if } 0 < k < r; \\ f_r(\sigma) = \max \{\sigma(r-1) + 1, f(\sigma)\}.
\end{gather*}
By the assumption, we get $(H_k: k \leq r+1)$ such that
\begin{enumerate}
    \item $H_0 = \omega$ and $H_k \in [H_{k-1}]^\omega$ if $k > 0$;
    \item $A \not\leq_T B \oplus H_k$;
    \item if $k > 0$ then $H_k$ is free for $f_0, \ldots, f_{k-1}$.
\end{enumerate}

We claim that $H_{r+1}$ is free for $f$. Let $\sigma \in [H_{r+1}]^r$ be arbitrary. Then $f(\sigma) \in I^\sigma_k$ for some $k \leq r$ and thus $f(\sigma) = f_k(\sigma)$. As $H_{r+1}$ is free for $f_k$, $f(\sigma) \not\in H_{r+1} - \sigma$.
\end{proof}

So, it suffices to deal with $\FS^r$ for trapped functions. At first, it seems easier to deal with properly trapped functions. The properly trapped functions can be contained in compact $\Pi^0_1$ classes, and they look closer to finite colorings and thus may enjoy some properties derived from the strong cone avoidance property of the Achromatic Ramsey Theorem. In fact, compact $\Pi^0_1$ classes and the strong cone avoidance property of $\ART$ are key ingredients when we deal with the properly trapped functions later. However, it turns out that the $r$-trapped functions are the easiest to deal with.

\begin{lemma}\label{lem:FS.sca.r-trapped}
If $f: [\omega]^r \to \omega$ is $r$-trapped and $X$ is Martin-L\"{o}f random in $f$, then there exists an infinite $X$-computable $f$-free set.

Hence, $\FS^r$ for $r$-trapped functions has the strong cone avoidance property.
\end{lemma}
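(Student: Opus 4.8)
The plan is to exhibit the $f$-free sets, suitably coded, as a $\Pi^0_1(f)$ class of positive measure on $2^\omega$, and then extract a member from $X$ by Ku\v{c}era's theorem. First note that, since $f$ is $r$-trapped, $f(\sigma)>\max\sigma$ for every $\sigma\in[\omega]^r$, so $f(\sigma)\notin\sigma$ automatically and an infinite set $H$ is $f$-free precisely when $f(\sigma)\notin H$ for all $\sigma\in[H]^r$. Fix once and for all a partition of a cofinite subset of $\omega$ into consecutive finite intervals $I_1<I_2<\cdots$ whose lengths are powers of $2$ and grow fast, say $|I_n|=2^n$. Given $Z\in 2^\omega$, read $Z$ as a concatenation of blocks $B_n$ with $|B_n|=\log_2|I_n|$, let $h_n(Z)\in I_n$ be the element named by $B_n$ under a fixed bijection, and put $H_Z=\{h_n(Z):n\ge1\}$. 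Then $H_Z\leq_T Z$ and $H_Z$ is infinite for every $Z$. Let $P_f=\{Z:H_Z\text{ is }f\text{-free}\}$; as non-freeness of $H_Z$ is witnessed by a single $\sigma\in[\omega]^r$ with $\sigma\subseteq H_Z$ and $f(\sigma)\in H_Z$, a condition enumerable from $Z$ and $f$, the class $P_f$ is $\Pi^0_1(f)$.

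The crux is that $\mu(P_f)>0$. A witness $\sigma$ must have its $r$ elements in distinct intervals $I_{n_1},\dots,I_{n_r}$ with $n_1<\dots<n_r$, and then $\mu(\{Z:\sigma\subseteq H_Z\})=\prod_i|I_{n_i}|^{-1}$ by independence of the blocks. Because $f$ is $r$-trapped, $f(\sigma)>\max\sigma$, so $f(\sigma)$ lies in some $I_m$ with $m>n_r$ (the case $m=n_r$ contributes $0$, since block $n_r$ already codes an element of $I_{n_r}$ strictly below $f(\sigma)$); hence, conditioned on $\sigma\subseteq H_Z$, the event $f(\sigma)\in H_Z$ has probability at most $|I_{n_r+1}|^{-1}$. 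Summing a union bound, first over the $\prod_i|I_{n_i}|$ choices of $\sigma$ realizing a fixed pattern $n_1<\dots<n_r$ and then over patterns, yields $\mu(2^\omega\setminus P_f)\leq\sum_{N\ge r}\binom{N-1}{r-1}|I_{N+1}|^{-1}$, which is $<1$ once the $|I_n|$ grow fast enough (for $|I_n|=2^n$ the bound is $\tfrac12$). This is the only point where $r$-trappedness is used essentially: for a general trapped $f$ the value $f(\sigma)$ could fall in an interval to the left of $\sigma$ and the analogous series would diverge.

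Now apply (the relativization to $f$ of) Ku\v{c}era's theorem: since $X$ is Martin-L\"{o}f random in $f$ and $P_f$ is a $\Pi^0_1(f)$ class of positive measure, some tail $X \uh [k,\infty)$ lies in $P_f$, i.e. $H:=H_{X \uh [k,\infty)}$ is $f$-free; and $H$ is infinite with $H\leq_T X \uh [k,\infty)\leq_T X$, which proves the first assertion. For the second, fix $A\not\leq_T B$ and an $r$-trapped instance $f$ of $\FS^r$. The reals not Martin-L\"{o}f random in $f$ form a null set, and $\{X:A\leq_T B\oplus X\}$ is null as well, because for each Turing functional $\Phi$ the class $\{X:\Phi^{B\oplus X}=A\}$ is a $\Pi^0_1(A\oplus B)$ class of measure $0$ (otherwise a Lebesgue-density argument would give $A\leq_T B$). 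Picking $X$ outside both null sets and applying the first part produces an infinite $f$-free $H\leq_T X$, whence $A\leq_T B\oplus H$ would give $A\leq_T B\oplus X$, a contradiction. Thus $\FS^r$ for $r$-trapped functions has strong cone avoidance property.

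The step I expect to demand the most care is the measure estimate of the second paragraph --- fixing the interval coding, justifying the block independence, and choosing the growth rate of the $|I_n|$ --- and in particular making explicit how $r$-trappedness pushes $f(\sigma)$ past every interval meeting $\sigma$, which is exactly what keeps the series summable and below $1$. If one prefers not to invoke Ku\v{c}era's theorem, the same conclusion is reachable directly: run the construction with a family of growth rates $(|I^{(j)}_n|)_n$, $j\ge1$, on disjoint coordinate blocks of $Z$, chosen so that $\mu(\{Z:H^{(j)}_Z\text{ is not }f\text{-free}\})<2^{-j}$; then ``$H^{(j)}_Z$ is not $f$-free for every $j$'' is, by independence across $j$, an $f$-Martin-L\"{o}f test of measure $0$, so the $f$-random $X$ makes some $H^{(j)}_X$ an $f$-free set --- infinite because ``$H^{(j)}_Z$ is finite'' is a fixed Martin-L\"{o}f null set by Borel--Cantelli.
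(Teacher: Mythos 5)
Your proposal is correct and takes essentially the same route as the paper: the paper likewise codes the choice of one element from each member of a computable sequence of exponentially growing intervals as a point of Cantor space, uses $r$-trappedness to bound the measure of the non-free configurations (so the codes of free sets form a $\Pi^0_1(f)$ class of positive measure), and then applies relativized Ku\v{c}era together with the fact that $A \not\leq_T B \oplus X$ holds for almost every $X$ to obtain strong cone avoidance; your union-bound measure estimate versus the paper's level-by-level count is only a difference in bookkeeping. One nitpick: $\{X : \Phi^{B \oplus X} = A\}$ is $\Pi^0_2(A \oplus B)$ rather than $\Pi^0_1$, but its nullity follows from the majority-vote (Sacks) argument you invoke, so nothing in the proof is affected.
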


\begin{proof}
Fix $A, X$ and $f$ as in the assumption. We define a computable sequence of consecutive intervals as follows. Let $J_k =  [a_k, b_k] = [k,k]$ for $k < r$. Given $J_k = [a_k, b_k]$ defined and $k+1 \geq r$, let $a_{k+1} = b_k + 1$,
$$
    b_{k + 1} = \min \{b_k + 2^c: 2^c \geq 2^{k + 3} \binom{k+1}{r}\}
$$
and $J_{k+1} = [a_{k+1}, b_{k+1}]$. Let $c_k$ be such that $b_k - a_k = 2^{c_k} - 1$.

Let $T = \bigcup_{l < \omega} \prod_{k \leq l} J_k$. Then $T$ is a computably bounded computable subtree of $[\omega]^{< \omega}$. Moreover, $[T]$ can be computably mapped to $2^\omega$: the string $\sigma$ of length $r$ such that $\sigma(k) = k$ for all $k < r$ is mapped to the empty string; if $\sigma \in T$ of length $k \geq r$ is mapped to $\mu \in 2^{<\omega}$ and $x = a_k + i \leq b_k$, then $\sigma\<x\>$ is mapped to $\mu\nu$ where $\nu$ is the $i$-th element of $2^{c_k}$ under some computable enumeration of $2^{<\omega}$.

If $\sigma \in T \cap [\omega]^k$ is $f$-free, then
$$
  \{x \in J_{k}: \sigma\<x\> \text{ is \emph{not} free for } f\} \subseteq \{f(\rho): \rho \in [\sigma]^r\},
$$
as $f$ is $r$-trapped. So, for each $l$,
$$
    m_{T \cap [\omega]^{\leq l}} \{\sigma \in T \cap [\omega]^l: \sigma \text{ is free for } f\} > 2^{-1}.
$$
Let $S = \{\sigma \in T: \sigma \text{ is free for } f\}$. Under the above computable isomorphism between $[T]$ and $2^\omega$, $[S]$ is computably isomorphic to a $\Pi^f_1$ class of Cantor space of positive measure. By the relativization of a result of Ku\u{c}era (the corollary of Lemma 3 in \cite{Kucera:85}, see also \cite[Proposition 3.2.24]{Nies:2010.book}), $X$ computes an infinite path $Y \in [S]$ which is clearly free for $f$.

For the strong cone avoidance property, fix $A \not\leq_T B$. Then $A \not\leq_T B \oplus X$ almost everywhere in Cantor space. So we can pick $X$ and $Y$ such that $A \not\leq_T B \oplus X$, $X$ is Martin-L\"{o}f random in $f$, $Y$ is an infinite $f$-free set computable in $X$.
\end{proof}

Now, we can finish (F1).

\begin{corollary}\label{cor:FS.sca.1}
$\FS^1$ has the strong cone avoidance property.
\end{corollary}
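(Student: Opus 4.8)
The plan is to derive Corollary~\ref{cor:FS.sca.1} as an immediate consequence of the two lemmas just proved. Lemma~\ref{lem:FS.sca.trapped} reduces $\FS^1$ to $\FS^1$ for trapped functions, and for exponent $r=1$ the only traps are $I^\sigma_0 = [0,\sigma(0)]$ and $I^\sigma_1 = (\sigma(0),\infty)$, so a trapped $f\colon[\omega]^1\to\omega$ is either $0$-trapped (i.e. $f(x)\le x$ for all $x$) or $1$-trapped (i.e. $f(x) > x$ for all $x$). Lemma~\ref{lem:FS.sca.r-trapped} with $r=1$ already handles the $1$-trapped case, giving strong cone avoidance for $\FS^1$ restricted to $1$-trapped functions. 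So the remaining case is the $0$-trapped one: $f\colon\omega\to\omega$ with $f(x)\le x$.

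For $0$-trapped functions at exponent $1$, the natural move is to invoke Theorem~\ref{thm:CGHJ} together with Theorem~\ref{thm:DzJo}, exactly as remarked in the paragraph following Theorem~\ref{thm:CGHJ}: if $f(x)\le x$ for all $x$, then Theorem~\ref{thm:CGHJ} produces $g\le_T f$ with $g\colon\omega\to 4$ such that every infinite $g$-homogeneous set is $f$-free, and the infinite pigeonhole principle has strong cone avoidance by Theorem~\ref{thm:DzJo}; so given $A\not\le_T B$, one can pick $H$ infinite $g$-homogeneous with $A\not\le_T B\oplus H$ (note $A\not\le_T B\oplus g$ since $g\le_T f$ and we first pass to $f$ cone-avoiding, or rather we note $B\oplus g \le_T B \oplus f$), and $H$ is $f$-free. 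Alternatively — and perhaps more cleanly — one observes that a $0$-trapped $f\colon\omega\to\omega$ is in particular bounded below the identity, and one can directly run a simple Mathias/pigeonhole argument: colour each $x$ by, say, $f(x)$ if $f(x) < x$ contributes an obstruction, but the cleanest statement is just to cite the already-noted consequence of Theorems~\ref{thm:CGHJ} and~\ref{thm:DzJo}.

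Putting this together: by Lemma~\ref{lem:FS.sca.trapped} it suffices to prove strong cone avoidance for $\FS^1$ for trapped functions; a trapped $f\colon\omega\to\omega$ is $0$-trapped or $1$-trapped; the $1$-trapped case is Lemma~\ref{lem:FS.sca.r-trapped}; the $0$-trapped case follows from Theorems~\ref{thm:CGHJ} and~\ref{thm:DzJo} as noted in the remark after Theorem~\ref{thm:CGHJ}. Since strong cone avoidance for a class is closed under taking a finite partition of the instances into subclasses each with the property (one simply applies the relevant subclass argument to whichever subclass the given $f$ lies in), $\FS^1$ for trapped functions has strong cone avoidance, hence so does $\FS^1$.

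There is essentially no obstacle here; the real content was front-loaded into Lemmas~\ref{lem:FS.sca.trapped} and~\ref{lem:FS.sca.r-trapped}. The only point requiring a word of care is bookkeeping the oracle: one should first reduce to the case where the given $f$ is itself cone-avoiding (replace $B$ by $B\oplus f$, or rather note $A\not\le_T B$ does not a priori give $A\not\le_T B\oplus f$, so in the strong cone avoidance setting one works with the instance $f$ and the cone determined by $A\not\le_T B$ directly, using that the solution $H$ need only satisfy $A\not\le_T B\oplus H$), matching the convention already in force in the proof of Lemma~\ref{lem:FS.sca.r-trapped} where $A\not\le_T B\oplus X$ is arranged "almost everywhere." This is routine and I would dispatch it in one sentence.
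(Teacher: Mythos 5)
Your proof is correct and follows essentially the same route as the paper: reduce via Lemma~\ref{lem:FS.sca.trapped} to trapped functions, handle the $1$-trapped case by Lemma~\ref{lem:FS.sca.r-trapped}, and the $0$-trapped case by combining Theorems~\ref{thm:CGHJ} and~\ref{thm:DzJo}, exactly as in the remark following Theorem~\ref{thm:CGHJ}. The only wobble is your aside about first making $f$ cone avoiding, which is unnecessary (and you correctly retract it): strong cone avoidance of the pigeonhole principle applies to $g \leq_T f$ regardless of the complexity of $f$.
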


\begin{proof}
By Lemmata \ref{lem:FS.sca.trapped} and \ref{lem:FS.sca.r-trapped}, we just need the strong cone avoidance property of $\FS^1$ for $0$-trapped functions, which follows easily from Theorems \ref{thm:DzJo} and \ref{thm:CGHJ}.
\end{proof}

Assume that $r > 1$ and $\FS^k$ for $k < r$ has the strong cone avoidance property. With these assumptions, we establish the cone avoidance property of $\FS^r$ and thus accomplish (F2).

\begin{lemma}\label{lem:FS.sca.ca}
$\FS^r$ has the cone avoidance property.
\end{lemma}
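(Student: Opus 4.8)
The plan is to follow the same zig-zag strategy used for Achromatic Ramsey Theorems, adapting Lemma \ref{lem:WRT.ca}: we pass from $\FS^r$ to a ``limit'' problem of exponent $r-1$ via cohesiveness, solve that limit problem using the strong cone avoidance hypothesis at exponent $r-1$, and then reconstruct an honest $f$-free set by a computable thinning argument. By Lemma \ref{lem:FS.sca.trapped} it suffices to handle $\FS^r$ for trapped functions, and by Lemma \ref{lem:FS.sca.r-trapped} the case of $r$-trapped functions is already done (even strongly); so I would assume $f$ is $k$-trapped for some $k < r$. Fix $A, B$ and a $k$-trapped $f \colon [\omega]^r \to \omega$ with $A \not\leq_T B \oplus f$; we must find a cone avoiding infinite $f$-free set.

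First I would use cone avoidance of $\COH$ (Theorem \ref{thm:COH.sca}) to extract cohesion. For each $\rho \in [\omega]^{r-1}$, the function $x \mapsto f(\rho\<x\>)$ or $x \mapsto f(\<x\>\rho)$ — depending on where the trapped coordinate sits relative to $\rho$ — need not stabilize, but its membership in any fixed finite interval does: since $f$ is $k$-trapped, once $x$ exceeds the relevant coordinates the value $f(\sigma)$ is either one of the at most $r-1$ ``old'' elements of $\rho$ (a bad, non-free possibility) or lies in a tail interval. The cleaner move is to fix, for each $\rho \in [\omega]^{r-1}$ and each $j < r$, the set $R_{\rho, j} = \{x : f(\text{the unique }\sigma\in[\omega]^r\text{ with }[\sigma]^{r-1}\ni\rho,\ x\notin\rho) = \sigma(j)\}$, and take a $\COH$-cohesive (and cone avoiding) $Z$ for this countable family. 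On $Z$, for each $\rho$ the ``type'' of $f(\rho\<x\>)$ eventually stabilizes to one of finitely many options; define $\bar f(\rho)$ to record that limiting option (either ``$f$ lands on $\rho(i)$ for a fixed $i$'' or ``$f$ lands in a moving trap above $\rho$''), so that $\bar f \colon [Z]^{r-1} \to$ (a finite set) is $Z \oplus f$-computable. Then apply the induction hypothesis (strong cone avoidance of $\FS^{r-1}$, after composing $\bar f$ with a function tagging the genuinely dangerous color among $\rho(0),\dots,\rho(r-2)$) to pass to a cone avoiding $W \in [Z]^\omega$ on which $\bar f$ is ``free''. Finally, build a strictly increasing $g\oplus W$-computable sequence $\sigma_0 \subset \sigma_1 \subset \cdots$ inside $W$, choosing each new point large enough that for every $\rho \in [\sigma_s]^{r-1}$ and the new $x$, the value $f(\sigma)$ on the resulting $r$-set agrees with the stabilized $\bar f(\rho)$-prediction; since $\bar f$ is free on $W$, that prediction never lands inside $W - \sigma$. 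The union $V = \bigcup_s \sigma_s$ is infinite, $f$-free, and $A \not\leq_T B \oplus f \oplus V$ because it is computed from the cone avoiding $W$.

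The main obstacle is the bookkeeping in the middle step: unlike the colors in Lemma \ref{lem:WRT.ca}, the value $f(\sigma)$ is an actual element of $\omega$, and the ``dangerous'' outcomes are exactly those where it equals one of the finitely many elements of $\rho$ — so one must set up $\bar f$ as a genuine function into a bounded alphabet that nonetheless records \emph{which} element of the increasing tuple is hit, and verify that ``$\bar f(\rho)$ avoids a future element of $W$'' really is implied by $\bar f$-freeness of $W$ together with the tail-selection in the final construction. A secondary subtlety is making sure the reduction to trapped (indeed $k$-trapped with $k<r$) functions is legitimate here: Lemma \ref{lem:FS.sca.trapped} gives this for cone avoidance, and $r$-trapped functions are dispatched separately, so the cohesion-plus-induction argument only needs to cover $k<r$, where the trapped coordinate genuinely interacts with $\rho$ and the limit function is well-defined.
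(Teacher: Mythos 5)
There is a genuine gap in the middle step, and it is exactly the point you flag as the "main obstacle" without resolving it. You have the dangerous and harmless cases reversed: for freeness we must ensure $f(\sigma) \notin H - \sigma$, so the values that are \emph{harmless} are precisely those equal to a coordinate of the tuple $\sigma$ itself, while the dangerous outcome is that $f(\rho\<x\>)$ stabilizes to some fixed number $v$ that is \emph{not} a coordinate of $\rho\<x\>$ but does lie in the set being built. Your cohesiveness family $R_{\rho,j}$ and the resulting "type" function $\bar f$ into a bounded alphabet only record whether the value hits $\sigma(j)$ for some $j$, i.e.\ they record the harmless collisions and discard the actual limiting value. Consequently the planned application of strong cone avoidance of $\FS^{r-1}$ buys nothing: a function whose values are (tags of) coordinates of its own argument has every set free for it, so "$\bar f$-freeness of $W$" cannot rule out the genuinely bad event that $\lim_x f(\rho\<x\>)$ is an element of $W - \rho$. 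Even after your reduction to $k$-trapped functions this event is real for every $k \leq r-2$: the value lives in the fixed interval $[\rho(k-1),\rho(k)]$, which may contain many elements of $W$ that are not coordinates of $\rho$, and your bookkeeping neither detects nor avoids them. (The trapping reduction itself is legitimate but unnecessary for this lemma.)

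The repair is to make the limit function $\omega$-valued rather than type-valued, which is what the paper does: take the cohesiveness family $R_{\sigma,x} = \{y > \max\sigma : g(\sigma\<y\>) = x\}$ over \emph{all} values $x \in \omega$, choose a cone avoiding cohesive $C$, and set $\bar g(\sigma) = \lim_{y \in C} g(\sigma\<y\>)$ when that limit exists and $\bar g(\sigma) = \max\sigma$ (a harmless default, since $\max\sigma \in \sigma$) otherwise. Applying strong cone avoidance of $\FS^{r-1}$ to this honest $\bar g \colon [\omega]^{r-1} \to \omega$ gives a cone avoiding $D \in [C]^\omega$ with $\bar g(\sigma) \notin D - \sigma$, and then the greedy construction works: for $\sigma \in [\xi_s]^{r-1}$ and sufficiently large $y \in C$, either $g(\sigma\<y\>) = \bar g(\sigma) \notin \xi_s - \sigma$, or (when no limit exists) every value $\leq \max\xi_s$ is attained only finitely often along $C$, so $g(\sigma\<y\>) > \max\xi_s$. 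Note also that collisions with \emph{future} elements are not handled by $\bar g$-freeness at all but by the greedy choice itself: requiring each extension $\xi_s\<y\>$ to be free for $g$ forces $y \neq g(\tau)$ for the finitely many $\tau \in [\xi_s]^r$, which excludes only finitely many candidates $y$; your sketch attributes this to "$\bar f$-freeness plus tail-selection," which it is not. Your overall architecture ($\COH$, then the exponent-$(r-1)$ strong hypothesis, then computable thinning) matches the paper, but as written the central object fed to $\FS^{r-1}$ is the wrong one and the argument does not go through.
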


\begin{proof}
Let $X, Y$ and $g: [\omega]^r \to \omega$ be such that $X \not\leq_T Y \oplus g$.

For each $\sigma \in [\omega]^{r-1}$ and $x$, let $R_{\sigma,x} = \{y > \max \sigma: g(\sigma\<y\>) = x\}$. By the strong cone avoidance property of $\COH$, let $C \in [\omega]^\omega$ be such that $C$ is cohesive for $(R_{\sigma,x}: \sigma \in [\omega]^{r-1}, x < \omega)$ and $X \not\leq_T Y \oplus g \oplus C$. Thus, the following function is total:
$$
    \bar{g}(\sigma) =
    \left\{
      \begin{array}{ll}
        \lim_{y \in C} g(\sigma\<y\>), & \text{if } \lim_{y \in C} g(\sigma\<y\>) \text{ exists;} \\
        \max \sigma, & \text{otherwise.}
      \end{array}
    \right.
$$
By the induction hypothesis that $\FS^{r-1}$ has the strong cone avoidance property, let $D \in [C]^\omega$ be such that $X \not\leq_T Y \oplus g \oplus C \oplus D$ and $D$ is $\bar{g}$-free.

We define a desired $g$-free $H$ as a subset of $D$ by induction. Let $\xi_0 = \emptyset$. Suppose that $\xi_s \in [D]^{<\omega}$ is defined and free for $g$. By the cohesiveness of $C$, if $\sigma \in [\xi_s]^{r-1}$ and $y \in C$ is sufficiently large, then either $g(\sigma\<y\>) = \lim_{y \in C} g(\sigma\<y\>) = \bar{g}(\sigma)$, or $g(\sigma\<y\>) > \max \xi_s$. As $\xi_s$ is $\bar{g}$-free, if $\sigma \in [\xi_s]^{r-1}$ and $y \in C$ is sufficiently large, then either $g(\sigma\<y\>) = \bar{g}(\sigma) \not\in \xi_s - \sigma = \xi_s \<y\> - \sigma\<y\>$, or $g(\sigma\<y\>) > \max \xi_s$ and thus $g(\sigma\<y\>) \not\in \xi_s \<y\> - \sigma\<y\>$ as well. So the following number is defined:
$$
    x_s = \min \{y \in D: y > \max \xi_s \wedge \xi_s \<y\> \text{ is free for } g\}.
$$
Let $\xi_{s+1} = \xi_s \<x_s\>$. Finally, let $H = \bigcup_s \xi_s$. Then $H$ is $g$-free. Moreover, $H \leq_T g \oplus D$ and thus $X \not\leq_T Y \oplus g \oplus H$.
\end{proof}

Below, we work on (F3): to prove the strong cone avoidance property of $\FS^r$. By Lemmata \ref{lem:FS.sca.trapped} and \ref{lem:FS.sca.r-trapped}, it suffices to prove the following restriction of Theorem \ref{thm:FS.sca}.

\begin{lemma}\label{lem:FS.sca.k-trapped}
For $k < r$, $\FS^r$ for $k$-trapped functions has the strong cone avoidance property.
\end{lemma}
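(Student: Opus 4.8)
The plan is to adapt the three-step construction used for $\ART^r_{<\infty,d}$ in \S\ref{s:WRT.sca} to the free-set setting, exploiting the fact that a $k$-trapped function $f:[\omega]^r\to\omega$ only takes values in the $k$-th trap determined by its argument. Fix $A\not\leq_T B$ and a $k$-trapped $f:[\omega]^r\to\omega$; we must find a cone avoiding infinite $f$-free set. The key structural observation is that if $\sigma=(x_0,\dots,x_{r-1})\in[\omega]^r$ and $f(\sigma)\in I^\sigma_k=[x_{k-1},x_k]$ (or $[0,x_0]$ if $k=0$), then whether $f(\sigma)$ lands in $H-\sigma$ is controlled by the two sub-tuples $\rho=(x_0,\dots,x_{k-1})$ below the trap and $\tau=(x_k,\dots,x_{r-1})$ above it, together with the value of $f_\rho(\tau):=f(\rho\tau)$. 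So, exactly as in \S\ref{ss:WRT.sca.D}, I would first use strong cone avoidance of $\COH$ together with strong cone avoidance of $\FS^{r-k}$ (from the induction hypothesis, valid since $r-k \leq r$; note $k<r$ so $r-k>0$, and when $k>0$ we have $r-k<r$) to build a cone avoiding cohesive-like $C\in[\omega]^\omega$: for each $\rho\in[\omega]^k$ the function $f_\rho$ restricted to large tuples from $C$ is governed by a ``limit colouring'' $\bar f_\rho$, and I can arrange that the relevant set (of $\rho$'s, via a $\FS^k$-style step on the parameters) is simultaneously free for these limit colourings, so that $f_\rho(\tau)\notin C-\rho\tau$ for all $\tau\in[C]^{r-k}$ with $\min\tau$ sufficiently large.

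Next, paralleling \S\ref{ss:WRT.sca.G}, I would run a Seetapun-style Mathias forcing over $C$ to build a cone avoiding $G=\bigcup_n\xi_n\in[C]^\omega$ with $\max\xi_n<\min\xi_{n+1}$, so that (i) each block $\xi_n$ is itself $f$-free "internally" in the sense that $f(\sigma)\notin\xi_n-\sigma$ whenever $\sigma\in[\xi_n]^r$ — obtainable because $\FS^r$ for $k$-trapped functions restricted to a single block is handled by the induction on exponents applied blockwise, or more directly because one can keep blocks short enough — and (ii) the cross-block condition $f_\rho(\tau)\notin G-\rho\tau$ holds for $\rho\in[\bigcup_{i<n}\xi_i]^k$ and $\tau\in[\bigcup_{i\geq n}\xi_i]^{r-k}$, which follows from the cohesiveness/freeness properties of $C$ established in the first step. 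The forcing uses the $\Pi^0_1$ class of $k$-trapped colourings agreeing with $f$ on the current stem, together with cone avoidance of $\WKL$ (Theorem \ref{thm:JoSo}) and cone avoidance of $\FS^{r-1}$ (Lemma \ref{lem:FS.sca.ca}, i.e. the already-proved (F2)), exactly as Lemma \ref{lem:WRT.sca.G.ext} uses Lemma \ref{lem:WRT.ca}.

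Finally, in place of the pigeonhole step of \S\ref{ss:WRT.sca.H}, I would thin $G$ to a cone avoiding $H$ using strong cone avoidance of the infinite pigeonhole principle (Theorem \ref{thm:DzJo}) applied to a suitable finite colouring of $G$ recording, for each $x\in G$, enough bookkeeping about which block $x$ lies in — but here the danger is that there is no obvious \emph{finite} colouring, since free-set violations involve actual values $f(\sigma)\in\omega$, not finitely many colours. The main obstacle, therefore, is precisely this last step: unlike the achromatic case, I cannot simply partition by colour-sets. The resolution I anticipate is to make the block construction do more work, so that $G$ is \emph{already} $f$-free with no further thinning: arrange in step two that $f(\sigma)\notin G-\sigma$ for \emph{every} $\sigma\in[G]^r$, splitting by the position $k'$ of the trap of $\sigma$ and, within a block, using that blocks are $f$-free internally, and across blocks using the cross-block freeness from step one. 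If genuine thinning is still needed, the finite colouring should record only the (finitely many) "block-type" patterns of how the first $r$ elements of $G$ beyond a given point distribute among consecutive blocks, which suffices because the trap structure localizes any violation to at most $r$ consecutive blocks. Verifying that this bookkeeping is genuinely finite and that pigeonhole on it yields an $f$-free homogeneous class is where the real care will be required.
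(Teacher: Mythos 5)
Your overall architecture (a cohesive-like first approximation, then a Seetapun-style Mathias forcing over it, with the pigeonhole step recognized as problematic) matches the outer shape of the paper's argument, but there is a genuine gap at exactly the step you flag, and your proposed repairs do not close it. The paper makes the forcing itself produce the $f$-free set: conditions $(\sigma,X)$ are required to be \emph{admissible}, i.e., $\sigma\tau$ is $f$-free for every $\tau\in[X]^{r-k}$, and the entire difficulty is to add a new element $x$ to the stem while preserving this. The dangerous case is the one your cohesiveness/limit step cannot touch: an $r$-tuple $\zeta=\rho\xi$ with exactly $k$ elements $\rho$ in the stem and $\xi$ in the future reservoir, for which $f(\zeta)\in I^\zeta_k=[\max\rho,\xi(0)]$ may equal the newly added $x$. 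Here $f_\rho$ is \emph{not} a finite colouring (its range grows with $\xi$), so neither $\COH$-style limits nor the freeness property you arrange in your first step applies, and since the bad values depend on all future $\xi$ one cannot simply pick $x$ avoiding finitely many numbers. The paper's mechanism is probabilistic: candidate extensions are taken from fast growing finite trees $T\subseteq[X]^{<\omega}$; Lemma \ref{lem:FS.sca.fgt.cnt} shows that for each fixed $\xi$ at most $\binom{|\sigma\tau|}{k}$ children of a node are spoiled (this uses property (E) of the set $E$, itself built from strong cone avoidance of $\ART$ together with $\FS^{l}$ for $k<l<r$); then strong cone avoidance of $\ART$ is applied to the \emph{finite} colouring $\xi\mapsto h(\xi)=\{\upsilon\in[T]:\sigma\upsilon\xi\text{ not free}\}$ to stabilize the bad leaf-sets to at most $d_{r-k}$ possibilities, leaving a positive $m_T$-measure of leaves good for \emph{all} future $\xi$ from a thinned cone avoiding reservoir (Lemmata \ref{lem:FS.sca.fgt}, \ref{lem:FS.sca.Seetapun}, \ref{lem:FS.sca.finite-ext}). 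Nothing in your proposal plays this role; in fact you never invoke $\ART$ at all, although its strong cone avoidance is what this lemma's proof leans on in three places (the construction of $E$, Case 1 of Lemma \ref{lem:FS.sca.Seetapun}, and Lemma \ref{lem:FS.sca.finite-ext}).

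Your two suggested repairs do not substitute for this. Making blocks internally free and citing cross-block conditions from the first step covers tuples whose trap lies inside one block or which have more than $k$ past elements, but leaves the straddling case above unresolved, and you give no cone avoiding procedure for choosing the next element against infinitely many future tuples. The fallback of a pigeonhole on finitely many ``block-type patterns'' cannot work for the reason you yourself note: a freeness violation is an identity $f(\zeta)=x$ between specific numbers, and homogenizing block patterns does not remove such identities; moreover the claim that the trap structure localizes violations to at most $r$ consecutive blocks is unjustified, since with $|\rho|=k$ the value $f(\rho\xi)$ can be any element of the gap between $\max\rho$ and $\xi(0)$, hence can land in a block arbitrarily far from those of $\rho$ and $\xi$. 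So the proposal reproduces the scaffolding of the paper's proof but omits its central new idea, the fast-growing-tree measure argument combined with strong cone avoidance of $\ART$, without which one cannot maintain admissibility (hence freeness of $G$) while preserving cone avoidance.
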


From now on, we fix $k < r$, $A \not\leq_T B$ and a $k$-trapped function $f: [\omega]^r \to \omega$. If $A \not\leq_T B \oplus X$, then $X$ is \emph{cone avoiding}; a Mathias condition $(\sigma,X)$ is \emph{cone avoiding} if $X$ is cone avoiding.

We prove Lemma \ref{lem:FS.sca.k-trapped} by constructing a cone avoiding infinite $f$-free set $G$. We build $G$ in two steps:
\begin{enumerate}
    \item We apply the strong cone avoidance property of the Achromatic Ramsey Theorem and of $\FS^q$ ($q < r$) to build a cone avoiding $E \in [\omega]^\omega$ such that
        \begin{enumerate}
            \item[(E)] for each $\rho \in [E]^{< \omega}$ with $k < |\rho| < r$, $f(\rho\tau) \not\in E - \rho$ for all $\tau \in [E]^{r - |\rho|}$ with $\min \tau$ sufficiently large.
        \end{enumerate}
    \item By a Seetapun-style Mathias forcing, we build a cone avoiding $f$-free $G \in [E]^\omega$. In this step, we need some measure theoretic argument, which could be taken as an application of a probabilistic method and is similar to that in Csima and Mileti \cite{Csima.Mileti:2009.rainbow}. The measure theoretic argument also needs the strong cone avoidance property of the Achromatic Ramsey Theorem.
\end{enumerate}

To facilitate the construction, for each $\sigma \in [\omega]^{<r}$, let $f_\sigma: [\omega]^{r - |\sigma|} \to \omega$ be such that $f_\sigma(\tau) = f(\sigma\tau)$. In particular, $f_\emptyset = f$. Moreover, fix $(d_n: n > 0)$ so that $\ART^n_{<\infty,d_n}$ has the strong cone avoidance property.

\subsection{The construction of $E$}

We build the desired $E$ from a cone avoiding $D$, which is sufficiently generic for Mathias forcing and has some nice properties.

\begin{lemma}
For each $\rho \in [\omega]^{<\omega}$ with $k < |\rho| < r$ and a cone avoiding $X \in [\omega]^\omega$, there exist $\theta \in [I^\rho_k]^{\leq d_{r-|\rho|}}$ and a cone avoiding $Y \in [X]^\omega$ such that $f_\rho([Y]^{r-|\rho|}) = \theta$.
\end{lemma}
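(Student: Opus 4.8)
The statement to prove says: for $\rho \in [\omega]^{<\omega}$ with $k < |\rho| < r$ and a cone avoiding $X \in [\omega]^\omega$, there is $\theta \in [I^\rho_k]^{\leq d_{r-|\rho|}}$ and a cone avoiding $Y \in [X]^\omega$ with $f_\rho([Y]^{r-|\rho|}) = \theta$. Since $f$ is $k$-trapped and $k < |\rho|$, for every $\tau \in [\omega]^{r-|\rho|}$ with $\max\rho < \min\tau$ we have $f(\rho\tau) \in I^{\rho\tau}_k$; but because $k < |\rho|$, the trap $I^{\rho\tau}_k = [\,(\rho\tau)(k-1), (\rho\tau)(k)\,] = [\rho(k-1),\rho(k)] = I^\rho_k$ depends only on $\rho$, not on $\tau$. (One must check the boundary cases $k=0$, giving $I^\rho_0 = [0,\rho(0)]$, and note that since $|\rho| < r$ the index $k$ is genuinely $< |\rho|$ so $\rho(k)$ makes sense.) Hence $f_\rho : [\omega]^{r-|\rho|} \to \omega$ is, after restricting to arguments above $\max\rho$, really a coloring taking values in the \emph{finite} set $I^\rho_k$, whose size is $c := |I^\rho_k| = \rho(k) - \rho(k-1) + 1 < \omega$ (with the obvious reading when $k=0$).

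The plan is then immediate: view $f_\rho \uh [\,X \cap (\max\rho,\infty)\,]^{r-|\rho|}$ as a $c$-coloring of $[\omega']^{r-|\rho|}$ where $\omega' = X \cap (\max\rho,\infty)$, and apply strong cone avoidance of $\ART^{r-|\rho|}_{<\infty,\,d_{r-|\rho|}}$ (Theorem~\ref{thm:WRT.sca}, with $d_{r-|\rho|}$ chosen as fixed at the start of the section) relative to the pair $(A,B)$. Since $X$ is cone avoiding, i.e. $A \not\leq_T B \oplus X$, strong cone avoidance hands us an infinite $Y \in [X]^\omega$ with $\max\rho < \min Y$ — we may just pass to $Y \subseteq X \cap (\max\rho,\infty)$, which is still cone avoiding since it is computable from $X$ up to shrinking — such that $A \not\leq_T B \oplus Y$ and $|f_\rho([Y]^{r-|\rho|})| \leq d_{r-|\rho|}$. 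Now set $\theta = f_\rho([Y]^{r-|\rho|})$; by the previous paragraph $\theta \subseteq I^\rho_k$ and $|\theta| \leq d_{r-|\rho|}$, so $\theta \in [I^\rho_k]^{\leq d_{r-|\rho|}}$, and trivially $f_\rho([Y]^{r-|\rho|}) = \theta$, as required.

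There is essentially no obstacle here: the lemma is a bookkeeping application of the already-established strong cone avoidance of Achromatic Ramsey Theorems, and the only point requiring care is the elementary observation that $k$-trappedness with $k < |\rho|$ forces $f_\rho$ to be confined to the single interval $I^\rho_k$ regardless of $\tau$, so that the coloring really is finite with range contained in $I^\rho_k$ and Theorem~\ref{thm:WRT.sca} applies with $c = |I^\rho_k|$ colors. One should also note in passing that we need $r - |\rho| \geq 1$ for $\ART^{r-|\rho|}$ to make sense, which holds since $|\rho| < r$; and that $f_\rho$ is computable from $f$, so using $\ART$ relative to $(A,B)$ with $X$ cone avoiding indeed produces a $Y$ with $B \oplus Y$ — even $B \oplus f \oplus X \oplus Y$ in the intended downstream use — failing to compute $A$ exactly when we arrange the cone for $\ART$ over $B \oplus f \oplus X$; but the statement as written only asks for $B \oplus Y \not\geq_T A$ via cone avoidance of $Y$ over $X$, which is precisely what strong cone avoidance of $\ART^{r-|\rho|}_{<\infty,d_{r-|\rho|}}$ delivers.
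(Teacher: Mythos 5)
Your proposal is correct and is essentially the paper's own argument: since $f$ is $k$-trapped and $k<|\rho|$, the coloring $f_\rho$ has range contained in the fixed finite interval $I^\rho_k$, and then strong cone avoidance of $\ART^{r-|\rho|}_{<\infty,d_{r-|\rho|}}$ (relativized to $B\oplus X$, inside $X\cap(\max\rho,\infty)$) yields the cone avoiding $Y$ and $\theta=f_\rho([Y]^{r-|\rho|})$. The extra bookkeeping you supply (the traps' independence of $\tau$, $r-|\rho|\geq 1$, relativization) is exactly what the paper leaves implicit.
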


\begin{proof}
As $f$ is $k$-trapped and $|\rho| > k$, $f_\rho$ is a finite coloring with range contained in $I^\rho_k$. So the lemma follows from the strong cone avoidance property of $\ART^{r - |\rho|}_{<\infty, d_{r - |\rho|}}$.
\end{proof}

By the above lemma and Lemma \ref{lem:M-forcing.ca}, we can build a descending sequence of cone avoiding Mathias conditions $((\sigma_n, X_n): n < \omega)$ and a sequence of finite sets $(\theta_\rho: k < |\rho| < r)$, which satisfy the following properties:
\begin{enumerate}
    \item for each $n$, $|\sigma_n| < |\sigma_{n+1}|$;
    \item for each $e$, there exists $n$ with $\Phi^B_e(Z) \neq A$ for all $Z \in (\sigma_n,X_n)$;
\end{enumerate}
and also
\begin{itemize}
    \item[(E')] if $k < |\rho| < r$, then $\theta_\rho \in [I^\rho_k]^{\leq d_{r-|\rho|}}$ and $f_\rho([X_n]^{r-|\rho|}) = \theta_\rho$ for some $n$.
\end{itemize}
Let $D = \bigcup_n \sigma_n$. Then $D$ is infinite and cone avoiding.

For each $l \in (k, r)$ and $i < d_{r-l}$, let $F_{l, i}: [\omega]^{l} \to \omega$ be such that
$$
  F_{l,i}(\rho) =
  \left\{
  \begin{array}{ll}
    \theta_\rho(i), & \text{if } i < |\theta_\rho|; \\
    0, & \text{otherwise}.
  \end{array}
  \right.
$$
By the induction hypothesis that $\FS^l$ for $l < r$ has the strong cone avoidance property, we can obtain a cone avoiding $E \in [D]^\omega$, which is $F_{l,i}$-free for all $l \in (k,r)$ and $i < d_{r-l}$.

\begin{lemma}
$E$ satisfies (E).
\end{lemma}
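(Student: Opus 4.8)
The plan is to verify property (E) directly by unwinding the definitions, using the $F_{l,i}$-freeness of $E$ together with property (E') enjoyed by the underlying Mathias generic $D$ (recall $E \in [D]^\omega$, so every $\rho \in [E]^{<\omega}$ also lies in $[D]^{<\omega}$). Fix $\rho \in [E]^{<\omega}$ with $k < |\rho| = l < r$. By (E'), there is some $n$ such that $f_\rho([X_n]^{r-l}) = \theta_\rho$; choose $b$ large enough that $E \cap (b,\infty) \subseteq X_n$ (possible since $E \subseteq D = \bigcup_n \sigma_n$ and the $X_n$ are nested cofinitely into $D$ along the generic). Then for every $\tau \in [E \cap (b,\infty)]^{r-l}$ we have $\tau \in [X_n]^{r-l}$, hence $f(\rho\tau) = f_\rho(\tau) \in \theta_\rho$. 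It remains to show $f(\rho\tau) \notin E - \rho$.

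For this I would argue by cases on the element $\theta_\rho(j) \in \theta_\rho$ that equals $f_\rho(\tau)$. Writing $f_\rho(\tau) = \theta_\rho(j)$ with $j < |\theta_\rho| \le d_{r-l}$, note that by construction $\theta_\rho(j) = F_{l,j}(\rho)$. Since $E$ is $F_{l,j}$-free and $\rho \in [E]^{l}$, we have $F_{l,j}(\rho) \notin E - \rho$ by the definition of free set; that is, $f(\rho\tau) = F_{l,j}(\rho) \notin E - \rho$, exactly as required by (E). (The values with index $j \ge |\theta_\rho|$ never arise here, so the padding convention $F_{l,j}(\rho) = 0$ in the definition is harmless.) Ranging over all $\tau \in [E \cap (b,\infty)]^{r-l}$ with the single bound $b$ chosen above finishes the verification for this $\rho$; since $\rho$ was an arbitrary element of $[E]^{<\omega}$ with $k < |\rho| < r$, property (E) holds.

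The only delicate point — and the step I would be most careful about — is the choice of the bound $b = b_\rho$: one must check that the finitely many $X_n$ appearing via (E') for the various $\rho$ are absorbed correctly, i.e. that for each individual $\rho$ there genuinely is an $n$ with $f_\rho([X_n]^{r-l}) = \theta_\rho$ and that $E$ is eventually contained in $X_n$. This is immediate from the nesting of the Mathias conditions and the fact that $E \subseteq D \subseteq \bigcup_m \sigma_m$ with $\sigma_m \subseteq X_n$ for $m \ge n$, but it is worth stating explicitly. Everything else is a routine chase through the definitions of "$k$-trapped", "free", and the auxiliary colorings $F_{l,i}$, so no further machinery is needed.
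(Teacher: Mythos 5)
Your proposal is correct and takes essentially the same route as the paper: the $F_{l,i}$-freeness of $E$ yields $\theta_\rho \cap (E - \rho) = \emptyset$ (your element-by-element version of this), and (E') together with the nesting of the Mathias conditions supplies the bound $b$ with $E \cap (b,\infty) \subseteq X_n$, exactly as the paper's terser proof intends. The only cosmetic slip is that for $m \geq n$ one has $\sigma_m - \sigma_n \subseteq X_n$ rather than $\sigma_m \subseteq X_n$; this does not affect the argument.
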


\begin{proof}
Fix an arbitrary $\rho \in [E]^{<\omega}$ with $l = |\rho| \in (k,r)$. As $E$ is $F_{l,i}$-free for all $i < d_{r-l}$, $\theta_\rho \cap (E - \rho) = \emptyset$. By (E'), there exists $b$ such that if $\tau \in [E \cap (b,\infty)]^{r-l}$ then $f_\rho(\tau) \in \theta_\rho$ and thus $f(\rho\tau) = f_\rho(\tau) \not\in E - \rho$. So $E$ satisfies (E).
\end{proof}

\subsection{The construction of $G$}

We build the desired $f$-free set $G$ as a subset of $E$, by Mathias forcing.

We work with a specific subset of Mathias conditions. A Mathias condition $(\sigma,X)$ is \emph{admissible} if
\begin{itemize}
    \item[(ad1)] $\sigma X \subseteq E$, $X$ is cone avoiding and
    \item[(ad2)] $\sigma\tau$ is $f$-free for all $\tau \in [X]^{r-k}$.
\end{itemize}
Since $E$ is cone avoiding and $\emptyset \tau = \tau$ is trivially $f$-free for all $\tau \in [E]^{r-k}$, $(\emptyset,E)$ is an admissible condition.

If $(\sigma,X)$ is admissible, then let $\mathcal{F}_{\sigma,X}$ be the set of all $k$-trapped $g: [\omega]^r \to \omega$ such that $\sigma\tau$ is $g$-free for all $\tau \in [X]^{r-k}$. By the definition of admissibility, $f \in \mathcal{F}_{\sigma,X}$. As each $k$-trapped $g$ satisfies $g(\rho) \leq \max \rho$ for all $\rho \in [\omega]^r$, $\mathcal{F}_{\sigma,X}$ can be identified with a $\Pi^X_1$ class in Cantor space.

By the lemma below, admissible conditions always capture some free sets.

\begin{lemma}\label{lem:FS.sca.M-condition}
If $(\sigma,X)$ is an admissible Mathias condition and $g \in \mathcal{F}_{\sigma,X}$, then there exists $Y \in [X]^\omega$ such that $\sigma Y$ is $g$-free. Moreover, if $g$ is cone avoiding then $Y$ can also be made cone avoiding.
\end{lemma}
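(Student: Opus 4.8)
The plan is to produce $Y$ as an infinite subset of $X$ by a short chain of thinnings — one for each nonempty $\rho\in[\sigma]^{\le k}$, and then a final one for $\rho=\emptyset$ — and then to check that $\sigma Y$ is $g$-free by splitting $[\sigma Y]^r$ according to $j:=|\pi\cap\sigma|$. Write $\pi=\rho\upsilon$ with $\rho=\pi\cap\sigma$, $\upsilon=\pi\cap Y$, so $|\rho|=j$, $|\upsilon|=r-j$, and recall that $k$-trappedness with $k<r$ gives $g(\pi)\le\pi(k)$, and $g(\pi)\ge\pi(k-1)$ when $k\ge1$. For $\rho\in[\sigma]^{<r}$ put $g_\rho(\tau)=g(\rho\tau)$; the reduction to be carried out is that it suffices to make $Y$ free for $g_\rho$ for every $\rho\in[\sigma]^{\le k}$ (with $\rho=\emptyset$, i.e.\ $g_\emptyset=g$, the last of these), because the tuples with $j>k$ and the ``$\sigma$-side'' of all tuples will be handled for free.

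Indeed, the verification for a fixed $\pi$ goes as follows. If $j\ge k$, then $\upsilon$ has at most $r-k$ elements and can be padded, inside $X$ and above $\max\sigma$, to some $\tau\in[X]^{r-k}$ with $\pi\subseteq\sigma\tau$; since $g\in\mathcal{F}_{\sigma,X}$ the set $\sigma\tau$ is $g$-free, so $g(\pi)\notin\sigma\tau-\pi\supseteq\sigma-\pi$. If $j<k$, then $\pi(k-1)$ is an element of $Y$, so $g(\pi)\ge\pi(k-1)>\max\sigma$, again giving $g(\pi)\notin\sigma-\pi$. If $j>k$, then $\pi(k)$ is an element of $\sigma$, so $g(\pi)\le\pi(k)\le\max\sigma<\min Y$, hence $g(\pi)\notin Y-\pi$. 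Thus the only possible violation is $g(\pi)\in Y-\pi$ for a tuple with $j\le k$, and for such a tuple this is precisely $g_\rho(\upsilon)\notin Y-\upsilon$ failing; so freeness of $\sigma Y$ for $g$ follows once $Y$ is free for each $g_\rho$, $\rho\in[\sigma]^{\le k}$.

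It remains to secure those freeness conditions. For a nonempty $\rho\in[\sigma]^{\le k}$ the exponent $r-|\rho|$ is $<r$, so strong cone avoidance of $\FS^{r-|\rho|}$ (the induction hypothesis) lets us pass, inside whatever infinite cone-avoiding subset of $X$ we currently have, to a $g_\rho$-free infinite subset which is again cone avoiding; since $g_\rho$-freeness is inherited by subsets, iterating over the finitely many such $\rho$ yields a nested chain $X\supseteq X_1\supseteq\cdots\supseteq X_N$ of cone-avoiding sets, $X_N$ being free for all these $g_\rho$. The remaining obligation, that $Y\subseteq X_N$ be free for $g$ itself, has exponent $r$, and here one invokes cone avoidance of $\FS^r$ (Lemma~\ref{lem:FS.sca.ca}), applied to $g$ transported to $\omega$ by the enumeration of $X_N$ and with the resulting free set pushed back into $X_N$; then $Y\subseteq X_i$ for every $i$, so $Y$ is simultaneously free for every $g_\rho$, and by the previous paragraph $\sigma Y$ is $g$-free. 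The cone-avoidance clause drops out because every step is an application of a cone-avoidance principle over a base of the form ``$B$ together with the current subset of $X$'', which never rises above the cone over $B$ since $X$ does not.

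The main obstacle is exactly this last, exponent-$r$ thinning. The lower-exponent steps used \emph{strong} cone avoidance and so needed nothing of the instance, but for the exponent $r$ only plain cone avoidance of $\FS^r$ is available at this stage, and that requires the instance to be cone avoiding — for the transported copy of $g\!\restriction\![X_N]^r$ this means $X_N\oplus g$, and hence $X\oplus g$, cone avoiding. So one really wants $X\oplus g$ (not just $g$) to avoid the cone; this is how the lemma will be used in the Seetapun-style construction of $G$, the relevant $g$ being extracted from the $\Pi^0_1$ class $\mathcal{F}_{\sigma,X}$ by $\WKL$ cone avoidance exactly as in Case~2 of Lemma~\ref{lem:WRT.sca.G.ext}, so ``$g$ cone avoiding'' should be read with $X$ in the background. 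Alternatively, the $g$-free subset of $X_N$ can be built directly by a block-wise construction inside $X_N$ in the style of Csima and Mileti, pulling a path out of a positive-measure $\Pi_1$ class of $g$-free branches, with strong cone avoidance of Achromatic Ramsey Theorems used to keep the proportion of ``bad'' next-elements below $1/2$; that is the route through which the measure-theoretic input of this section first enters.
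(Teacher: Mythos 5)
Your proof is correct and follows essentially the same route as the paper: pass to an infinite subset of $X$ that is free for the sections $g_\rho$ (strong cone avoidance of $\FS^{r-|\rho|}$ for nonempty $\rho$, Lemma~\ref{lem:FS.sca.ca} for $\rho=\emptyset$), then verify freeness of $\sigma Y$ by splitting $[\sigma Y]^r$ according to $|\pi\cap\sigma|$, using $k$-trappedness and the admissibility of $(\sigma,X)$ exactly as the paper does, the only (harmless) difference being that the paper demands $g_\rho$-freeness for all $\rho\in[\sigma]^{<r}$ while you note the tuples with $|\pi\cap\sigma|>k$ are handled automatically. Your closing remark that the cone-avoidance clause really uses $A\not\leq_T B\oplus X\oplus g$ is the intended reading of ``$g$ cone avoiding'' (the construction must happen inside the reservoir $X$, which is cone avoiding by admissibility), and it is exactly what the application in Case~2 of Lemma~\ref{lem:FS.sca.Seetapun} supplies via Jockusch--Soare relative to $B\oplus X$.
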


\begin{proof}
For each $\rho \in [\sigma]^{< r}$, let $g_\rho$ be such that $g_\rho(\tau) = g(\rho\tau)$ for all $\tau \in [\omega]^{r - |\rho|}$ with $\min \tau > \max \rho$. By the Free Set Theorem, pick $Y \in [X]^\omega$ which is $g_\rho$-free for all $\rho \in [\sigma]^{< r}$; if $g$ is cone avoiding then $Y$ can also be made cone avoiding, by Lemma \ref{lem:FS.sca.ca}.

To show that $\sigma Y$ is $g$-free, fix an arbitrary $\xi \in [\sigma Y]^r$. Let $\rho = \xi \cap \sigma$ and $\tau = \xi \cap Y$.

We claim that $g(\xi) \not\in \sigma - \rho$. If $|\rho| < k$, then $k > 0$ and $g(\xi) \geq \xi(k-1) > \max \sigma$ as $g$ is $k$-trapped. Suppose that $|\rho| \geq k$. Then $\tau$ is contained in some $\tau' \in [X]^{r-k}$ and $\xi = \rho\tau \subseteq \sigma\tau'$. As $\sigma\tau'$ is $g$-free, $g(\xi) \not\in \sigma\tau' - \xi \supseteq \sigma - \rho$.

On the other hand, $g(\xi) \not\in Y - \tau$ as $Y$ is $g_\rho$-free and $g(\xi) = g_\rho(\tau)$.

So, $\sigma Y$ is free for $g$.
\end{proof}

As usual, we need to find a descending sequence of admissible Mathias conditions extending $(\emptyset, E)$. To extend an admissible condition to another, we have to satisfy the computability condition (ad1) and the combinatorial condition (ad2). We have seen how to satisfy such a computability condition in \S \ref{s:WRT.sca} and shall use a similar strategy later. To satisfy (ad2), we use a probabilistic method. For this probabilistic method, we introduce a class of trees on which the sequences suitable for (ad2) form a set of sufficiently large measure.

We fix $d = d_{r-k}$ and $c$ such that $2^{c-1} > d$. A finite tree $T \subset [\omega]^{<\omega}$ is \emph{fast growing of order $n$}, if for each $\tau \in T - [T]$,
$$
    |\{x: \tau\<x\> \in T\}| \geq 2^{|\tau| + c + 2} \binom{n+|\tau|}{k}.
$$
If $(\sigma,X)$ is admissible and $g \in \mathcal{F}_{\sigma,X}$, then let $\mathcal{T}(\sigma,X,g)$ be the set of all finite trees $T \subset [X]^{<\omega}$ such that $T$ is fast growing of order $|\sigma|$ and $\sigma\tau$ is $g$-free for each $\tau \in T$.

According to the following two lemmata, there is a sufficiently high probability of finding finite sequences on a fast growing tree to extend the finite part of an admissible condition.

\begin{lemma}\label{lem:FS.sca.fgt.cnt}
Suppose that $(\sigma,X)$ is admissible and $T \in \mathcal{T}(\sigma,X,f)$. Then there exists $b$ such that if $\tau \in T$, $\xi \in [E \cap (b,\infty)]^{r-k}$ and $\sigma\tau\xi$ is $f$-free then
$$
    |\{\tau\<x\> \in T: \sigma\tau\<x\>\xi \text{ is \emph{not} free for } f\}| \leq \binom{|\sigma\tau|}{k}.
$$
\end{lemma}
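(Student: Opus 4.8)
The plan is to analyze, for a fixed $f$-free $\sigma\tau\xi$ with $\tau \in T$, which immediate successors $\tau\<x\> \in T$ fail: those $x$ for which $\sigma\tau\<x\>\xi$ is not $f$-free. Since $\sigma\tau$ is already $f$-free and $\sigma\tau\xi$ is $f$-free, the only way adding $x$ can create a violation is if $f(\upsilon) = x$ for some $\upsilon \in [\sigma\tau\<x\>\xi]^r$ with $f(\upsilon)$ landing on the newly inserted point $x$ (equivalently, $x \in (\sigma\tau\<x\>\xi) - \upsilon$ with $f(\upsilon)=x$); indeed if $f(\upsilon)$ were some other element of $\sigma\tau\<x\>\xi$, then either $\upsilon$ already lies in $\sigma\tau$ or in $\sigma\tau\xi$ (so freeness of those sequences would be contradicted), or $\upsilon$ itself contains $x$, but then $f(\upsilon) \in \sigma\tau\xi - (\upsilon - \<x\>)$ would already be forbidden by freeness of $\sigma\tau\xi$ together with the fact that $f(\upsilon)$ does not depend on which successor replaces the largest coordinate when... — so more carefully, the bad $x$ are exactly those equal to $f(\upsilon)$ for some $\upsilon \in [\sigma\tau\xi]^r$ that, after inserting $x$ appropriately, becomes an $r$-subset of $\sigma\tau\<x\>\xi$ hitting $x$.

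The key point is the $k$-trappedness of $f$, which I would exploit exactly as in Lemma~\ref{lem:FS.sca.r-trapped}: for $f(\upsilon)$ to equal the freshly inserted coordinate $x$ sitting strictly between $\max\sigma\tau$ and $\min\xi$, we need $f(\upsilon) \in I^\upsilon_k$ to fall in that gap, which constrains $\upsilon$ to have its first $k$ coordinates inside $\sigma\tau$ and its remaining coordinates among $\xi$ together with $x$ itself. But the value $f(\upsilon)$ is determined by $\upsilon$, and $\upsilon$ ranges over at most $\binom{|\sigma\tau|}{k}$ choices for its lower part once we fix that the upper part is forced; concretely, a bad successor $x$ must be a value $f(\rho\upsilon')$ where $\rho \in [\sigma\tau]^k$ and $\upsilon'$ is the unique extension using coordinates of $\xi$ (once $b$ is large enough that no element of $\xi$ can itself serve as one of the lower $k$ coordinates of a trapped violation). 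So the number of bad $x$ is bounded by $|\{f(\rho\upsilon'): \rho \in [\sigma\tau]^k\}| \leq \binom{|\sigma\tau|}{k}$, and I would choose $b$ large enough — depending on $\sigma$, $T$ and $f$ — to guarantee that every element of $[E \cap (b,\infty)]^{r-k}$ lies entirely above $\max\sigma\tau$ for all $\tau \in T$ (possible since $T$ is finite, so $\bigcup_{\tau \in T}\max\sigma\tau$ is finite), which is what makes the trapping analysis apply uniformly.

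The main obstacle I anticipate is the bookkeeping of which $r$-subsets of $\sigma\tau\<x\>\xi$ can witness non-freeness and ruling out the cases where $f(\upsilon)$ lands somewhere other than $x$: one has to check that if $\sigma\tau\<x\>\xi$ is not $f$-free via some $\upsilon$ with $f(\upsilon) \neq x$, then either $\sigma\tau$ or $\sigma\tau\xi$ was already not $f$-free (contradicting admissibility and the hypothesis on $\xi$), using the fact that $\upsilon$ either avoids $x$ entirely or, if it contains $x$, then $f(\upsilon) = f(\upsilon)$ is an element of the free set $\sigma\tau\xi$ minus $\upsilon \setminus \{x\}$, which is again forbidden. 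Here the $k$-trappedness does double duty: it pins down the position of $x$ among $\upsilon$'s coordinates (it must be coordinate number $\geq k$), and it forces $f(\upsilon) \in I^\upsilon_k \subseteq [\upsilon(k-1), \upsilon(k)]$, so that for $f(\upsilon)$ to equal $x$ (which lies in the gap between $\sigma\tau$ and $\xi$) we need $\upsilon(k-1) \leq x$, i.e., the lower $k$ coordinates of $\upsilon$ are below $x$, hence inside $\sigma\tau$. Once this case analysis is clean the counting is immediate, and the final bound $\binom{|\sigma\tau|}{k}$ falls out by counting the $k$-element lower parts $\rho \in [\sigma\tau]^k$, each of which (with $f$ $2$-... no, with $f$ merely a function) contributes at most one value $f(\rho\upsilon')$ to the set of bad successors.
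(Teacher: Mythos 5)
Your final count is right and matches what is actually needed (every bad successor $x$ is of the form $f(\rho\xi)$ with $\rho \in [\sigma\tau]^k$, giving $\binom{|\sigma\tau|}{k}$), but the reduction to ``the only new violations come from a set whose $f$-value equals $x$'' is exactly where the argument has a genuine gap. Consider a violating $\zeta \in [\sigma\tau\<x\>\xi]^r$ that contains $x$ together with a nonempty \emph{proper} subset of $\xi$. Freeness of $\sigma\tau\xi$ says nothing about $f(\zeta)$, since $x \in \zeta$; freeness of $\sigma\tau\<x\>$ says nothing, since $\zeta$ meets $\xi$; admissibility only covers $\sigma$ together with $(r-k)$-subsets of $X$, not these mixed sets; and $k$-trapping only gives $f(\zeta) \in [\zeta(k-1),\zeta(k)]$, which, when at least $k+1$ coordinates of $\zeta$ lie in $\sigma\tau\<x\>$, is an interval that can perfectly well contain elements of $\sigma\tau - \zeta$ (or other elements of $E$) rather than $x$. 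Your sentence claiming that if $\upsilon$ contains $x$ then ``$f(\upsilon) \in \sigma\tau\xi - (\upsilon - \<x\>)$ is again forbidden by freeness of $\sigma\tau\xi$'' is a non sequitur for precisely this reason: freeness of $\sigma\tau\xi$ constrains only values of $f$ on subsets of $\sigma\tau\xi$. Such mixed violations are not tied to the value $x$ at all, so a priori they could disqualify far more than $\binom{|\sigma\tau|}{k}$ successors, and nothing in your proposal rules them out.

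This is also why your choice of $b$ (mere separation, $b$ above everything in $\sigma$ and $T$) is insufficient and why property (E) of the pre-constructed set $E$ is the real content of the lemma's ``there exists $b$''. The paper takes $a$ to bound $\sigma$ and $T$ and then chooses $b > a$ by applying (E) to each of the finitely many possible heads $\rho' \subseteq E$ below $a$ with $k < |\rho'| < r$: for every $\upsilon \subset E \cap (b,\infty)$ with $|\rho'\upsilon| = r$ one gets $f(\rho'\upsilon) \notin E - \rho'$, which kills the mixed case above (there the head is $\rho\<x\>$ with $|\rho\<x\>| > k$ and the tail is a nonempty proper subset of $\xi$). The remaining two cases for $\zeta \ni x$ go essentially as you suggest: if $\zeta \subseteq \sigma\tau\<x\>$, use that $\sigma\tau\<x\>$ is $f$-free because $\tau\<x\> \in T \in \mathcal{T}(\sigma,X,f)$, together with trapping to get $f(\zeta) \leq x < \min\xi$; if $\zeta$ contains all of $\xi$, trapping gives $f(\zeta) \geq x > \max\sigma\tau$. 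Only after all sets containing $x$ are excluded does one conclude that a violating $\zeta$ lies in $[\sigma\tau\xi]^r$, whence $f(\zeta) = x$ by freeness of $\sigma\tau\xi$, and trapping then forces $\zeta = \rho\xi$ with $\rho \in [\sigma\tau]^k$, yielding the count. So your trapping-plus-freeness skeleton and the final counting are fine, but without invoking (E) — and choosing $b$ through it — the lemma cannot be proved.
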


\begin{proof}
Let $a < \omega$ be a strict upper bound on all numbers occurring in $\sigma$ and $T$. By (E), pick $b > a$ such that for all $\rho \subseteq a$ and $\upsilon \subset E \cap (b,\infty)$, if $k < |\rho| < r$ and $|\rho\upsilon| = r$ then $f(\rho\upsilon) \not\in E - \rho$.

Fix $\tau \in T$ and $\xi \in [E \cap (b,\infty)]^{r-k}$ such that $\sigma\tau\xi$ is $f$-free.

\begin{claim}
If $\tau\<x\> \in T$ and $\zeta = \rho\<x\>\upsilon \in [\sigma\tau\<x\>\xi]^r$, then $f(\zeta) \not\in \sigma\tau\<x\>\xi - \zeta$.
\end{claim}

\begin{proof}
If $\upsilon = \emptyset$, then $f(\zeta) = f(\rho\<x\>) \leq \zeta(k) \leq x$ and $f(\rho\<x\>) \not\in \sigma\tau\<x\> - \rho\<x\>$, as $f$ is $k$-trapped, $k < r$ and $\sigma\tau\<x\>$ is $f$-free. So, $f(\zeta) = f(\rho\<x\>) \not\in \sigma\tau\<x\>\xi - \zeta$.

If $\upsilon \neq \emptyset$ and $\upsilon \neq \xi$, then $|\upsilon| < r - k$ and $|\rho\<x\>| > k$. As $\upsilon \subset E \cap (b,\infty)$, $f(\zeta) = f(\rho\<x\>\upsilon) \not\in E - \rho\<x\>$. As $f$ is $k$-trapped, $f(\zeta) \leq \zeta(k) \leq x$. Thus $f(\zeta) \not\in E - \zeta \supset \sigma\tau\<x\>\xi - \zeta$.

If $\upsilon = \xi$ then $|\rho\<x\>| = k$. As $f$ is $k$-trapped, $f(\zeta) \geq \zeta(k-1) = x$. So, $f(\zeta) \not\in \sigma\tau \supseteq \sigma\tau\<x\>\xi - \zeta$.
\end{proof}

By the above claim, if $\tau\<x\> \in T$ and $\sigma\tau\<x\>\xi$ is \emph{not} free for $f$, then $f(\zeta) \in \sigma\tau\<x\>\xi - \zeta$ for some $\zeta \in [\sigma\tau\xi]^r$. As $\sigma\tau\xi$ is $f$-free, $f(\zeta) \not\in \sigma\tau\xi - \zeta$. Thus $f(\zeta) = x$. As $f$ is $k$-trapped and $\max \sigma\tau < x < \min \xi$, $\zeta \cap \xi = \xi$. Hence,
$$
    \{x: \tau\<x\> \in T \wedge \sigma\tau\<x\>\xi \text{ is not free for } f\} \subseteq \{f(\rho\xi): \rho \in [\sigma\tau]^k\}.
$$
The lemma follows immediately.
\end{proof}

\begin{lemma}\label{lem:FS.sca.fgt}
Suppose that $(\sigma,X)$ is admissible and $T \in \mathcal{T}(\sigma,X,f)$. Then there exists $b$ so that if $\xi \in [E \cap (b,\infty)]^{r-k}$ and $\sigma\xi$ is $f$-free then
$$
    m_T \{\tau \in [T]: \sigma\tau\xi \text{ is \emph{not} free for } f\} \leq 2^{-c-1}.
$$
\end{lemma}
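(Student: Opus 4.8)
The plan is to combine the per-node counting bound of Lemma~\ref{lem:FS.sca.fgt.cnt} with a union bound over the levels of $T$. I would let $b$ be exactly the witness supplied by Lemma~\ref{lem:FS.sca.fgt.cnt} for this $(\sigma,X,T)$; this $b$ depends only on $\sigma$, $T$ and $E$, not on $\xi$, so the same $b$ serves for all $\xi$. Fix $\xi \in [E\cap(b,\infty)]^{r-k}$ with $\sigma\xi$ free for $f$.

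The structural observation that makes the counting go through is that non-$f$-freeness is monotone: if $\tau' \subseteq \tau$ and $\sigma\tau'\xi$ is not $f$-free, then the witnessing $\zeta \in [\sigma\tau'\xi]^r$ still witnesses that $\sigma\tau\xi$ is not $f$-free. Consequently, for every leaf $\tau \in [T]$ with $\sigma\tau\xi$ not $f$-free there is a well-defined first level $j \in \{1,\dots,|\tau|\}$ at which $\sigma(\tau\uh j)\xi$ fails to be free while $\sigma(\tau\uh(j-1))\xi$ is still free; write $B_j$ for the set of leaves of $T$ whose first bad level is $j$. The $B_j$ are pairwise disjoint, and their union is exactly the set whose $m_T$-mass we must bound.

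To estimate $m_T(B_j)$ I would sum over the nodes $\nu \in T$ with $|\nu| = j-1$ such that $\sigma\nu\xi$ is $f$-free; these are the only nodes that can feed $B_j$, and by monotonicity every leaf of $T$ extending a bad successor $\nu\<x\>$ of such a $\nu$ (i.e.\ one with $\sigma\nu\<x\>\xi$ not $f$-free) lies in $B_j$. Since $T$ is fast growing at order $|\sigma|$, the node $\nu$ has at least $2^{|\nu|+c+2}\binom{|\sigma|+|\nu|}{k}$ successors in $T$; by the choice of $b$ and Lemma~\ref{lem:FS.sca.fgt.cnt}, at most $\binom{|\sigma\nu|}{k}=\binom{|\sigma|+|\nu|}{k}$ of them are bad. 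Hence the $m_T$-mass carried by the bad successors of $\nu$ is at most $2^{-|\nu|-c-2}\,m_T(\nu) = 2^{-(j-1)-c-2}\,m_T(\nu)$, and summing over all such $\nu$, using $\sum_{|\nu|=j-1}m_T(\nu)\le 1$, gives $m_T(B_j)\le 2^{-(j-1)-c-2}$. Summing the geometric series yields $m_T\{\tau\in[T]:\sigma\tau\xi\text{ not free for }f\}=\sum_{j\ge1}m_T(B_j)\le\sum_{j\ge1}2^{-(j-1)-c-2}=2^{-c-1}$, as wanted.

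This is essentially a deterministic first-moment computation, so I do not anticipate a genuine obstacle. The only points needing a little care are the monotonicity remark (which legitimises speaking of the \emph{first} bad level and keeps the $B_j$ disjoint), the uniformity of $b$ in $\xi$, and the minor bookkeeping that the leaves of $T$ need not all lie at the same depth, which is why one uses $\sum_{|\nu|=j-1}m_T(\nu)\le 1$ rather than equality.
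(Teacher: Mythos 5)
Your proposal is correct and takes essentially the same route as the paper: the paper's proof also chooses $b$ as in Lemma~\ref{lem:FS.sca.fgt.cnt}, combines that counting bound with the fast-growing condition to get the per-node mass estimate $m_T\{\tau\langle x\rangle \in T : \sigma\tau\langle x\rangle\xi \text{ not free}\} \leq 2^{-|\tau|-c-2} m_T(\tau)$, and then sums the resulting geometric series. Your write-up merely makes explicit the ``first bad level'' decomposition and the monotonicity of non-freeness that the paper leaves to the reader in its ``follows immediately'' step.
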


\begin{proof}
By the above lemma, for sufficiently large $b$, if $\xi \in [E \cap (b,\infty)]^{r-k}$, $\tau \in T - [T]$ and $\sigma\tau\xi$ is $f$-free, then
$$
    m_T \{\tau\<x\> \in T: \sigma\tau\<x\>\xi \text{ is \emph{not} free for } f\} \leq 2^{-|\tau| -c-2} m_T(\tau).
$$
The lemma follows immediately from the above inequality.
\end{proof}

Now, we can extend an admissible condition to force a cone avoiding requirement with the lemma below. The proof of the following lemma resembles the proof of Lemma \ref{lem:WRT.sca.G.ext}, in that we replace complicated questions concerning the specific $f$ by questions concerning all functions in $\mathcal{F}_{\sigma,X}$ and then exploit the cone avoidance property of $\WKL$.

\begin{lemma}\label{lem:FS.sca.Seetapun}
For each $e$ and each admissible $(\sigma,X)$, there exists an admissible $(\tau,Y) \leq_M (\sigma,X)$ such that $\Phi^B_e(Z) \neq A$ for all $Z \in (\tau,Y)$.
\end{lemma}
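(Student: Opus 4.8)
The plan is a Seetapun-style forcing argument, paralleling the proof of Lemma~\ref{lem:WRT.sca.G.ext}, with fast-growing trees of $f$-free extensions playing the role that colorings with few colors played there. Fix $e$ and an admissible $(\sigma,X)$.

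The first step is to build, computably in $f\oplus X$, a fast-growing tree $T\in\mathcal{T}(\sigma,X,f)$; admissibility of $(\sigma,X)$ already guarantees the one-step $f$-freeness needed to keep branching, so such $T$ exist. The key claim is that ``most'' leaves $\xi\in[T]$ (with respect to the tree measure $m_T$) are \emph{usable}, meaning that $\sigma\xi$ extends to a cone avoiding admissible condition $(\sigma\xi,Y_\xi)\le_M(\sigma,X)$. In verifying that $\sigma\xi\upsilon$ stays $f$-free for all $\upsilon\in[Y_\xi]^{r-k}$ one splits by how an $r$-tuple $\zeta$ meets the block $\sigma\xi$: tuples contained in $\sigma\xi$ are handled by $f$-freeness of $\sigma\xi$; tuples meeting it in more than $k$ points are exactly what property~(E) of $E$ was engineered to absorb; tuples internal to the reservoir are handled by $\FS$ for exponents below $r$ (induction hypothesis, via Lemma~\ref{lem:FS.sca.ca}), with Lemma~\ref{lem:FS.sca.M-condition} used to keep $Y_\xi$ cone avoiding; and the remaining case — $\zeta$ meeting $\sigma\xi$ in exactly $k$ points, where $k$-trappedness only pins $f(\zeta)$ to the interval $[\max(\zeta\cap\sigma\xi),\min(\zeta\setminus\sigma\xi)]$ and so permits it to land inside $\sigma\xi$ — is precisely what Lemmas~\ref{lem:FS.sca.fgt.cnt} and~\ref{lem:FS.sca.fgt} address: along a fast-growing tree, for each fixed test the branches on which such a tuple misbehaves carry $m_T$-measure at most $2^{-c-1}$, so the usable leaves are $m_T$-abundant, in particular nonempty.

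Now the dichotomy, according to whether some $T\in\mathcal{T}(\sigma,X,f)$ has two usable leaves that $(e,B)$-split over $\sigma$. If it does, fix such $T$, leaves $\xi_0,\xi_1$, an input $x$, and $i<2$ with $\Phi^B_e(\sigma\xi_i;x)\downarrow\ne A(x)$; then $(\tau,Y)=(\sigma\xi_i,Y_{\xi_i})$ works, since $\Phi^B_e(Z;x)=\Phi^B_e(\sigma\xi_i;x)\ne A(x)$ for every $Z\in(\tau,Y)$. Otherwise, let $\mathcal{U}$ be the set of $k$-trapped $g\in\mathcal{F}_{\sigma,X}$ such that no $T'\in\mathcal{T}(\sigma,X,g)$ has two $g$-usable leaves that $(e,B)$-split over $\sigma$; as each such $T'$ is finite, $g$-usability is arithmetic in $g\oplus X$ and $(e,B)$-splitting is $\Sigma^0_1$ in $B$, so $\mathcal{U}$ is a $\Pi^0_1$ class relative to $B\oplus X$, and $f\in\mathcal{U}$. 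By cone avoidance of $\WKL$ (Theorem~\ref{thm:JoSo}), pick $g\in\mathcal{U}$ with $B\oplus X\oplus g$ cone avoiding. Applying strong cone avoidance of Achromatic Ramsey Theorems to the finite colorings $g_\rho$ ($k<|\rho|<r$, range in $I^\rho_k$) to obtain an (E)-analogue for $g$, and the induction hypothesis for $\FS$ below $r$, thin $X$ to a cone avoiding $Y\in[X]^\omega$ with $(\sigma,Y)$ admissible also with respect to $g$. Then for $Z\in(\sigma,Y)$ with $\Phi^B_e(Z)$ total, one computes $\Phi^B_e(Z)$ from $B\oplus Y$ by searching, for each input, a $g$-usable leaf of a fast-growing subtree of $[Y]^{<\omega}$ on which the computation halts — all such agreeing since $g\in\mathcal{U}$ — so $\Phi^B_e(Z)\ne A$, and $(\sigma,Y)$ is the desired extension.

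The main obstacle is the abundance claim: the bookkeeping that appending a reservoir tuple to a block does not destroy $f$-freeness. The genuinely delicate case is a tuple meeting the current block in exactly $k$ points; $k$-trappedness leaves its color free to fall inside the block, and only the fast-growing geometry — fresh tree elements dominating the relevant $k$-trapped colors, so that the misbehaving children of any node are few — together with the measure estimate of Lemma~\ref{lem:FS.sca.fgt} rules this out for almost all branches. Property~(E) disposes of the $(>k)$-point obstructions and $\FS$ below $r$ of the reservoir-internal ones; with these combined, the $\WKL$/$\ART$/$\FS^{<r}$ arguments in the non-splitting case are essentially relativizations of the corresponding steps of Lemma~\ref{lem:WRT.sca.G.ext}.
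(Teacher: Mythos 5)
The central gap is the complexity of your dichotomy. You define $\mathcal{U}$ as the set of $k$-trapped $g\in\mathcal{F}_{\sigma,X}$ such that no fast-growing tree has two \emph{$g$-usable} leaves that $(e,B)$-split over $\sigma$, where usability asserts the existence of an infinite \emph{cone avoiding} reservoir making the extended condition admissible. That property quantifies over infinite subsets of $X$ and over Turing reductions of $A$; it is on its face $\Sigma^1_1$ (and mentions $A$), and it occurs negatively in your definition of $\mathcal{U}$, so $\mathcal{U}$ is not a $\Pi^0_1(B\oplus X)$ class and Theorem~\ref{thm:JoSo} cannot be invoked to pick a cone avoiding $g\in\mathcal{U}$. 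This is exactly the pressure point the paper's proof is built around: there $\mathcal{U}$ is defined by the purely finitary, quantitative condition that for every $T\in\mathcal{T}(\sigma,X,g)$ the $m_T$-measure of leaves containing an $(e,B)$-splitting pair is $<2^{-1}$, which is $\Pi^0_1(B\oplus X)$ because on a finite tree the splitting leaves are $\Sigma^0_1(B)$-enumerable; the ``abundance of usable leaves'' is then recovered only on the $f$-side of the dichotomy, by applying strong cone avoidance of $\ART^{r-k}$ to the coloring $\xi\mapsto h(\xi)=\{\upsilon\in[T]:\sigma\upsilon\xi \text{ not } f\text{-free}\}$ so as to thin the reservoir to a single cone avoiding $Y$ for which the bad leaves have $m_T$-measure $<2^{-2}$ uniformly in $\xi\in[Y]^{r-k}$. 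Your sketch omits this $\ART$ step in the splitting branch; Lemma~\ref{lem:FS.sca.fgt} only bounds the bad leaves for each fixed $\xi$ separately, so ``most leaves are usable with a common reservoir'' does not follow from it alone.

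The non-splitting branch has a second genuine gap: you never produce a reservoir that actually contains no $(e,B)$-splitting pair over $\sigma$, yet your claimed computation of $\Phi^B_e(Z)$ from $B\oplus Y$ needs exactly that (as in Case 2 of Lemma~\ref{lem:M-forcing.ca}). Searching for a ``$g$-usable leaf on which the computation halts'' is not a $B\oplus Y$-computable search (usability is not decidable), the convergent initial segments of an arbitrary $Z\in(\sigma,Y)$ need not be usable leaves of any fast-growing tree, and your $\mathcal{U}$ only forbids splitting between usable leaves of a single tree, so agreement of the values found is not guaranteed. The paper closes this with a construction you omit: via Lemma~\ref{lem:FS.sca.M-condition} it obtains a cone avoiding $Y_0$ with $\sigma Y_0$ $g$-free, builds a $Y_0$-computable sequence of fast-growing trees inside $Y_0$ coded into Cantor space, uses $g\in\mathcal{U}$ to see that the subtree of strings containing no splitting pair corresponds to a positive-measure $\Pi^{B\oplus Y_0}_1$ class, and then applies Ku\u{c}era's theorem with a Martin-L\"{o}f random that is itself cone avoiding to extract an infinite path $Y$ containing literally no splitting pair; only then does the standard argument give $\Phi^B_e(Z)\leq_T B\oplus Y\neq$ a computation of $A$ for total $Z\in(\sigma,Y)$. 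Without these two repairs --- a $\Pi^0_1$ formulation of the dichotomy and the random-path extraction of a splitting-free reservoir --- the proposal does not go through, even though its overall Seetapun-style architecture and its appeal to Lemmas~\ref{lem:FS.sca.fgt.cnt} and \ref{lem:FS.sca.fgt} and to property (E) point in the right direction.
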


\begin{proof}
Let $\mathcal{U}$ be the set of $g \in \mathcal{F}_{\sigma,X}$ such that for every $T \in \mathcal{T}(\sigma,X,g)$,
$$
    m_T \{\upsilon \in [T]: \upsilon \text{ contains an $(e,B)$-splitting pair over } \sigma)\} < 2^{-1}.
$$
So, $\mathcal{U}$ is a $\Pi^{B \oplus X}_1$ subset of $\mathcal{F}_{\sigma,X}$.

\medskip

\emph{Case 1:} $\mathcal{U} = \emptyset$. In particular, $f \in \mathcal{F}_{\sigma,X} - \mathcal{U}$.

Fix $T \in \mathcal{T}(\sigma,X,f)$ such that
$$
    m_T \{\upsilon \in [T]: \upsilon \text{ contains an $(e,B)$-splitting pair over } \sigma)\} \geq 2^{-1}.
$$
Let $b$ be as in Lemma \ref{lem:FS.sca.fgt} for $(\sigma,X)$ and $T$, and let $Y_0 = X \cap (b,\infty)$.

For each $\xi \in [Y_0]^{r-k}$, let
$$
    h(\xi) = \{\upsilon \in [T]: \sigma\upsilon\xi \text{ is \emph{not} free for } f\}.
$$
So, $h$ is a finite coloring of $[Y_0]^{r-k}$. By Lemma \ref{lem:FS.sca.fgt}, each $h(\xi)$ is a subset of $[T]$ and $m_T h(\xi) \leq 2^{-c-1}$. By the strong cone avoidance property of the Achromatic Ramsey Theorem, there exist $\theta$ and $Y \in [Y_0]^\omega$ such that $|\theta| \leq d = d_{r-k}$, $Y$ is cone avoiding and $h(\xi) \in \theta$ for each $\xi \in [Y]^{r-k}$. Thus,
$$
    m_T \{\upsilon \in [T]: \exists S \in \theta (\upsilon \in S)\} \leq 2^{-c-1} d < 2^{-c-1} 2^{c-1} = 2^{-2}.
$$
Let $P = \{\upsilon \in [T]: \forall S \in \theta(\upsilon \not\in S)\}$. By the definition of $h$,
$$
    P = \{\upsilon \in [T]: \forall \xi \in [Y]^{r-k}(\sigma\upsilon\xi \text{ is free for } f)\};
$$
by the above inequality, $m_T P > 2^{-2} 3$.

So, we can pick $\upsilon \in [T]$ such that $\upsilon$ contains an $(e,B)$-splitting pair over $\sigma$ and $\sigma\upsilon\xi$ is $f$-free for all $\xi \in [Y]^{r-k}$. Fix $x$ and $\eta \subseteq \upsilon$ such that $\Phi^B_e(\eta; x) \downarrow \neq A(x)$. Let $\tau = \sigma\eta$. Then $(\tau,Y)$ is a desired admissible condition.

\medskip

\emph{Case 2:} $\mathcal{U} \neq \emptyset$.

By Theorem \ref{thm:JoSo} of Jockusch and Soare, pick a cone avoiding $g \in \mathcal{U}$. By Lemma \ref{lem:FS.sca.M-condition}, let $Y_0 \in [X]^\omega$ be such that $Y_0$ is cone avoiding and $\sigma Y_0$ is $g$-free. We define a $Y_0$-computable sequence of consecutive intervals by induction:
\begin{itemize}
    \item $J_0 = [a_0, b_0]$, where $a_0 = 0$ and $|Y_0 \cap J_0| =  2^{n_0}$ for some $2^{n_0} \geq 2^{c+2} \binom{|\sigma|}{k}$.
    \item if $J_l = [a_l, b_l]$ is defined, then $J_{l+1} = [a_{l+1}, b_{l+1}]$, where $a_{l+1} = b_l + 1$ and $|Y_0 \cap J_{l+1}| = 2^{n_{l+1}}$ for some $2^{n_{l+1}} \geq 2^{l+c+3} \binom{|\sigma|+l+1}{k}$.
\end{itemize}
For each $l$, let $T_l$ be the set of all $\upsilon \in [\omega]^{\leq l}$ such that $\upsilon(i) \in Y_0 \cap J_i$ for all $i < |\upsilon|$. Trivially, $T_l \in \mathcal{T}(\sigma,X,g)$. We can $Y_0$-computably map infinite paths of $\bigcup_l T_l$ to $2^\omega$: the empty string is mapped to the empty string; if $\sigma \in [T_l]$ is mapped to $\mu \in 2^{<\omega}$ and $x$ is the $i$-th element in $Y_0 \cap J_{l}$, then $\sigma\<x\>$ is mapped to $\mu \nu$ such that $\nu$ is the $i$-th element in $2^{n_{l}}$ (under some computable enumeration of $2^{<\omega}$).

As $g \in \mathcal{U}$, for each $l$,
$$
    m_{T_l} \{\upsilon \in [T_l]: \upsilon \text{ contains an $(e,B)$-splitting pair over } \sigma)\} < 2^{-1}.
$$
Let
$$
    T = \{\upsilon \in \bigcup_l T_l: \upsilon \text{ contains \emph{no} $(e,B)$-splitting pair over } \sigma)\}.
$$
Then under the above mapping, $T$ is $Y_0$-computably isomorphic to a $\Pi^{B \oplus Y_0}_1$ subset of Cantor space with positive measure. So, we can pick some $R$ such that $R$ is Martin-L\"{o}f random in $B \oplus Y_0$, $A \not\leq_T B \oplus Y_0 \oplus R$ and $Y_0 \oplus R$ computes some $Y \in [T]$. Then $Y$ is cone avoiding and contains no $(e,B)$-splitting pair over $\sigma$. It follows that $\Phi^B_e(Z) \neq A$ for all $Z \in (\sigma,Y)$. Moreover, as $(\sigma,X)$ satisfies (ad2) and $Y \subseteq X$, $(\sigma,Y)$ satisfies (ad2) as well and thus is admissible. Hence, $(\sigma,Y)$ is as desired.
\end{proof}

By an argument similar to Case 1 in the proof of Lemma \ref{lem:FS.sca.Seetapun}, we can extend the finite part of an admissible condition.

\begin{lemma}\label{lem:FS.sca.finite-ext}
Each admissible condition $(\sigma,X)$ admits an admissible extension $(\tau,Y)$ with $|\tau| > |\sigma|$.
\end{lemma}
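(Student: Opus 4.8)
The plan is to replay Case~1 of the proof of Lemma~\ref{lem:FS.sca.Seetapun} with the $(e,B)$-splitting requirement simply deleted: we shall only need a single leaf of a fast growing tree that keeps extensions of $\sigma$ free, and positivity of a measure already yields one.

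First I would establish that $\mathcal{T}(\sigma,X,f)$ is nonempty and contains trees of positive height. Since $(\sigma,X)$ is admissible, $f$ is $k$-trapped and $\sigma\tau$ is $f$-free for all $\tau\in[X]^{r-k}$; hence $f\in\mathcal{F}_{\sigma,X}$, so Lemma~\ref{lem:FS.sca.M-condition} (applied with $g=f$; cone avoidance of the output is not needed at this stage) yields $Y_1\in[X]^\omega$ with $\sigma Y_1$ free for $f$. As subsets of $f$-free sets are $f$-free, $\sigma\tau$ is $f$-free for every $\tau\in[Y_1]^{<\omega}$; so any tree $T\subseteq[Y_1]^{<\omega}$ that is fast growing at order $|\sigma|$ and has all leaves at a common positive level lies in $\mathcal{T}(\sigma,X,f)$, and such a $T$ exists because $Y_1$ is infinite.

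Next I would run the measure argument of Case~1 verbatim. Let $b$ be as in Lemma~\ref{lem:FS.sca.fgt} for $(\sigma,X)$ and $T$, and set $Y_0=X\cap(b,\infty)$. For each $\xi\in[Y_0]^{r-k}$ the set $\sigma\xi$ is $f$-free by admissibility, so $m_T h(\xi)\leq 2^{-c-1}$, where $h(\xi)=\{\upsilon\in[T]:\sigma\upsilon\xi\text{ is not free for }f\}$. Regarding $h$ as a finite coloring of $[Y_0]^{r-k}$ and applying strong cone avoidance of $\ART^{r-k}_{<\infty,d_{r-k}}$, I obtain a cone avoiding $Y\in[Y_0]^\omega$ and $\theta$ with $|\theta|\leq d=d_{r-k}$ and $h(\xi)\in\theta$ for all $\xi\in[Y]^{r-k}$. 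Since $2^{c-1}>d$, $m_T\bigl(\bigcup_{S\in\theta}S\bigr)\leq d\,2^{-c-1}<2^{-2}$, so $P:=[T]\setminus\bigcup_{S\in\theta}S$ has positive measure and is therefore nonempty; by definition every $\upsilon\in P$ satisfies $\upsilon\notin h(\xi)$, i.e.\ $\sigma\upsilon\xi$ is $f$-free, for all $\xi\in[Y]^{r-k}$. Fix such a $\upsilon$; it is nonempty because $T$ has positive height. Put $\tau=\sigma\upsilon$. Then $\tau Y\subseteq E$, $\max\tau<b<\min Y$ (as $b$ strictly bounds all entries of $\sigma$ and of $T$), $Y$ is cone avoiding, and $\tau\xi=\sigma\upsilon\xi$ is $f$-free for all $\xi\in[Y]^{r-k}$; hence $(\tau,Y)$ is admissible, $(\tau,Y)\leq_M(\sigma,X)$, and $|\tau|=|\sigma|+|\upsilon|>|\sigma|$, as required.

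The only step that is genuinely new compared with Case~1 of Lemma~\ref{lem:FS.sca.Seetapun}, and thus the main obstacle, is the first one: there, nonemptiness of $\mathcal{T}(\sigma,X,f)$ was handed over by the hypothesis $f\notin\mathcal{U}$, whereas here one must produce a fast growing tree of $f$-free extensions of $\sigma$ outright, which is precisely what Lemma~\ref{lem:FS.sca.M-condition} provides. Everything after that is bookkeeping already carried out in Lemmata~\ref{lem:FS.sca.fgt.cnt} and~\ref{lem:FS.sca.fgt} and in Case~1.
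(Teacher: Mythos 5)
Your proposal is correct and is essentially the paper's argument: the paper itself presents this lemma as ``an argument similar to Case 1 in the proof of Lemma \ref{lem:FS.sca.Seetapun}'', and your construction (fast growing tree, the bound from Lemma \ref{lem:FS.sca.fgt}, strong cone avoidance of $\ART^{r-k}$ applied to the coloring $\xi \mapsto h(\xi)$, then picking a surviving leaf) is exactly that. The only cosmetic difference is that the paper takes a height-one tree with $2^{c+2}\binom{|\sigma|}{k}$ immediate successors and applies the counting Lemma \ref{lem:FS.sca.fgt.cnt} directly, so that membership in $\mathcal{T}(\sigma,X,f)$ is trivial by admissibility and neither Lemma \ref{lem:FS.sca.M-condition} nor the measure Lemma \ref{lem:FS.sca.fgt} is needed; your slightly heavier route is equally valid.
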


\begin{proof}
Let $n = 2^{c} \binom{|\sigma|}{k}$ and $(x_i: i < n)$ be a strictly increasing sequence from $X$. Let $T$ be a finite tree, consisting exactly of $\emptyset$ and $\<x_i\>$ for $i < n$. Trivially, $T \in \mathcal{T}(\sigma,X,f)$. Let $b$ be as in Lemma \ref{lem:FS.sca.fgt.cnt} for $(\sigma,X)$ and $T$. For $\xi \in [X \cap (b,\infty)]^{r-k}$, let
$$
    h(\xi) = \{i: \sigma\<x_i\>\xi \text{ is \emph{not} free for } f\}.
$$
By Lemma \ref{lem:FS.sca.fgt.cnt}, $h(\xi)$ is a subset of $n$ with no more than $\binom{|\sigma|}{k}$  elements. By the strong cone avoidance property of $\ART$, fix $\theta$ and $Y \in [X \cap (b,\infty)]^\omega$ such that $|\theta| \leq d$, $Y$ is cone avoiding and $h([Y]^{r-k}) = \theta$. By the definition of $h$, $i \in S \in \theta$ if and only if $\sigma\<x_i\>\xi$ is not $f$-free for some $\xi \in [Y]^{r-k}$. So,
$$
    \{i < n: \forall S \in \theta(i \not\in S)\} = \{i < n: \forall \xi \in [Y]^{r-k}(\sigma\<x_i\>\xi \text{ is free for } f)\}.
$$
Let $N$ denote the set above. Then
$$
    |N| \geq n - |\theta| \binom{|\sigma|}{k} \geq (2^{c} - d) \binom{|\sigma|}{k} > 0.
$$
So, we can pick $i \in N$ and let $\tau = \sigma\<x_i\>$. Then $(\tau,Y)$ is as desired.
\end{proof}

With Lemmata \ref{lem:FS.sca.Seetapun} and \ref{lem:FS.sca.finite-ext}, we can get a descending sequence of admissible Mathias conditions $((\sigma_n, X_n): n < \omega)$ such that
\begin{enumerate}
  \item $(\sigma_0,X_0) = (\emptyset, E)$;
  \item $|\sigma_n| < |\sigma_{n+1}|$ for each $n$;
  \item for each $n$ and $Z \in (\sigma_{n+1},X_{n+1})$, $\Phi_n^B(Z) \neq A$.
\end{enumerate}
Let $G = \bigcup_n \sigma_n$. By admissibility, $G$ is $f$-free; by the above properties, $G$ is infinite and cone avoiding.

So, we have proved Lemma \ref{lem:FS.sca.k-trapped} and thus also Theorem \ref{thm:FS.sca}.

\section{Remarks and Questions}\label{s:Questions}

By \cite[Theorem 3.1]{Cholak.Jockusch.ea:2001.Ramsey}, there exists an $\omega$-model of $\RCA + \RT^2_2$ consisting of only $\Delta^0_3$ sets. On the other hand, Jockusch's bounds apply to most theorems in the four families. So, if $\Phi$ and $\Psi$ are theorems from the same family for exponents $2$ and $3$, respectively, then usually $\Phi \not\vdash \Psi$. Naturally, we expect this relation to generalize to larger exponents. In other words, we can ask whether any of the four families gives rise to a proper hierarchy of combinatorial principles below $\ACA$. Actually, this question has been asked in \cite{Cholak.Giusto.ea:2005.freeset, Csima.Mileti:2009.rainbow} for $\FS$, $\TS$ and $\RRT$, respectively. In \cite{Wang:2013.COH.RRT}, it is shown that $\RRT^3_2 \not\vdash \RRT^4_2$. Here we state the analogous question for $\ART$.

\begin{question}\label{q:hierarchy}
Fix $(d_k: 0 < k < \omega)$ as in \S \ref{s:WRT.sca}. Does $\ART^r_{c,d_r} \vdash \ART^{r+1}_{e,d_{r+1}}$ for any $r > 1$ and reasonable $c,e$?
\end{question}

A possible approach to answering the above questions would be to construct solutions which lie in some level of the $\low_n$ hierarchy, as the author did in \cite{Wang:2013.COH.RRT}. Recall that a set $X$ is \emph{$\low_n$} if $X^{(n)} \equiv_T \emptyset^{(n)}$; otherwise, $X$ is \emph{non-$\low_n$}. By \cite[Theorem 3.1]{Cholak.Jockusch.ea:2001.Ramsey}, $\RT^2_2$ admits non-$\low_2$-omitting, and so do all $\Phi$ in the four families for exponent $2$, as they are consequences of $\RT^2_2$; by \cite[Theorem 5.3]{Wang:2013.COH.RRT}, $\RRT^3_2$ admits non-$\low_3$-omitting. But in general, little is known.

\begin{question}
Given $\Phi$ which is a statement in the four families with exponent $r > 2$, does it admit non-$\low_r$-omitting?
\end{question}

For $\ART$, we can ask finer questions. Clearly, for fixed $r$ and $c$, if $d \leq e$ then $\ART^r_{c,d} \vdash \ART^r_{c,e}$. Dorais et al. \cite{Dorais.Dzhafarov.ea:2013} have established some implications between different instances of $\ART$, e.g., $\ART^{mn+1}_{k^n, k^n - 1} \vdash \ART^{m+1}_{k,k-1}$ (\cite[Proposition 5.3]{Dorais.Dzhafarov.ea:2013}), although they use a less artistic name for some instances of $\ART$ there. But in general, relations between distinct instances of $\ART$ are unkown.

\begin{question}
Compare distinct instances of $\ART$, e.g., $\ART^r_{c,d}$ and $\ART^r_{c,d+1}$.
\end{question}

Note that, if $c < \infty$ then $\ART^r_{c,d}$ is equivalent to $\ART^r_{d+1,d}$. However, the obvious proof for this equivalence cannot be generalized to yield $\ART^r_{d+1,d} \vdash \ART^r_{<\infty,d}$, even if $d$ is a standard positive integer.

\begin{question}
Compare $\ART^r_{d+1,d}$ and $\ART^r_{<\infty,d}$.
\end{question}

Another kind of metamathematical questions deals with the relation between the four families. Recently, Xiaojun Kang \cite{Kang:2013} proved that $\RRT^2_2 \not\vdash \TS^2$ and thus $\RRT^2_2$ is strictly weaker than $\FS^2$. The general picture is yet to be discovered.

\begin{question}
Compare theorems between different families.
\end{question}

It may also be of interest to investigate properties of the sequence of integers $(d_k: 0 < k < \omega)$ in \S \ref{s:WRT.sca}. By the proof of Theorem \ref{thm:WRT.sca}, we can take $d_k = S_{k-1}$ for $k > 0$, where $(S_i: i < \omega)$ is the sequence of Schr\"{o}der numbers (see \cite{Weisstein:Schroder.number}):
$$
  S_0 = 1, \ S_n = S_{n-1} + \sum_{k < n} S_k S_{n-k-1}.
$$

\begin{question}
Are Schr\"{o}der numbers optimal bounds for the Achromatic Ramsey Theorem to have the strong cone avoidance property?
\end{question}

Dorais et al. \cite[Proposition 5.5]{Dorais.Dzhafarov.ea:2013} have shown that $\ART^{r+1}_{2^r, 2^r - 1} \vdash \ACA$. But the gap between $2^r - 1$ and $S_r$ is quite large, as $S_r > 2^{2r-2}$.

\bibliographystyle{plain}

\end{document}